\numberwithin{equation}{section}
\newtheorem{theorem}{Theorem}[section]
\newtheorem{lemma}{Lemma}[section]
\newtheorem{cor}{Corollary}[section]
\newtheorem{definition}{Definition}[section]
\newtheorem{remark}{Remark}[section]
\newcommand{\abs}[1]{\left\vert#1\right\vert}
\newcommand{\bff}{{  f}}
\newcommand{\bu}{{\bf u}}
\newcommand{\bX}{{X }}
\begin{document}

\date{}

\author{Mustafa Aggul \footnotemark[1]\, \hspace{.1in}Fatma G. Eroglu \footnotemark[2] \hspace{.1in} Song\"{u}l Kaya\footnotemark[3]\hspace{.1in} Alexander E. Labovsky\footnotemark[4] }

\title{A projection based Variational Multiscale Method for Atmosphere-Ocean Interaction}

\maketitle

\renewcommand{\thefootnote}{\fnsymbol{footnote}}

\footnotetext[1]{Department of Mathematics, Hacettepe University, Ankara 06800, Turkey, email: mustafaaggul@hacettepe.edu.tr}
\footnotetext[2]{Department of Mathematics, Faculty of Science, Bart{\i}n University, Bart{\i}n 74110, Turkey, email: fguler@bartin.edu.tr}

\footnotetext[3]{Department of Mathematics, Middle East Technical University, Ankara 06800, Turkey, email: smerdan@metu.edu.tr}

\footnotetext[4]{Department of Mathematical Sciences, Michigan Technological University, Houghton, MI 49931, USA, email: aelabovs@mtu.edu
}


\begin{abstract}
The proposed method aims to approximate a solution of a fluid-fluid interaction problem in case of low viscosities. The nonlinear interface condition on the joint boundary allows for this problem to be viewed as a simplified version of the atmosphere-ocean coupling. Thus, the proposed method should be viewed as potentially applicable to air-sea coupled flows in turbulent regime. The method consists of two key ingredients. The  geometric averaging approach is used for efficient and stable decoupling of the problem, which would allow for the usage of preexisting codes for the air and sea domain separately, as ``black boxes''. This is combined with the variational multiscale stabilization technique for treating flows at high Reynolds numbers. We prove the stability and accuracy of the method, and provide several numerical tests to assess both the quantitative and qualitative features of the computed solution.

\end{abstract}

\section{Introduction}

The study of solving coupled Navier-Stokes equations with special interface conditions
is of considerable interest, for instance in the simulation of atmosphere-ocean (AO)
interaction or two layers of a stratified fluid. In this paper, we investigate a low-viscosity fluid-fluid interaction problem, aiming at modeling AO flow in a turbulent regime.

Consider the $d$-dimensional ($d=2,3$) polygonal or polyhedral domain $\Omega$ in space that consists of two subdomains $\Omega_1$ and $\Omega_2$, coupled across an interface $I$, for times
$t\in [0,T]$. Coupling problem is: given  $ \nu_{i}>0 $, $f_{i}:[0,T]\rightarrow H^{1}(\Omega_{i})^d,u_{i}(0)\in
H^{1}(\Omega_{i})^d $ and $\kappa \in \mathbb{R} $, find (for $i=1,2$) $u_{i}:\Omega_{i}\times [0,T]\rightarrow \mathbb{R}^d$ and $p_{i}: \Omega_{i}\times [0,T]\rightarrow \mathbb{R}$ satisfying (for $0<t\leq T$)

\begin{eqnarray}
\partial_{t}u_i -\nu_i\Delta u_i+u_i \cdot \nabla u_i + \nabla p_i &=&  f_i \qquad \mathrm{in}\ \Omega_i, \label{eq:atmo} \\
-\nu_i \hat{n}_i  \cdot \nabla u_i \cdot \tau &=& \kappa |u_i - u_j|( u_i - u_j )\cdot \tau \quad \mathrm{on}  \ I \ \mathrm{for} \ i,j = 1,2, \ i \neq j \,  , \label{eq:atmoI}\\
u_i \cdot \hat{n}_i  &=&0 \qquad \mathrm{on} \ I \ \mathrm{for} \ i,j = 1,2, \label{eq:atmoNONLINEAR}\\
\nabla \cdot u_i &=&0 \qquad  \mathrm{ in } \ \Omega_i,  \label{eq:atmoNONLINEAR1}\\
u_i(x,0)&=&u_i^0(x) \qquad\mathrm{ in }\ \Omega_i , \label{eq:atmoIC}\\
u_i&=&0 \qquad\mathrm{ on } \ \Gamma_i = \partial\Omega_i\setminus I , \label{eq:atmoBC}
\end{eqnarray}
where $|\cdot|$ represents the Euclidean norm and the vectors $\hat{n}_i$ are the unit normals on $\partial \Omega_i$, and $\tau$ is any vector such that $\tau\cdot\hat{n}_i=0$. Here $u_i$, and $p_i$ denote the unknown velocity fields and pressure. The parameters are $\nu_i$ kinematic viscosities, $f_i$ the body forcing on the velocity, $\kappa$ the friction parameter (frictional drag force is assumed to be proportional to the square of the jump of the velocities across the interface).

Numerical methods for solving this type of coupled problems in laminar flow regime have been investigated \cite{BK06,CHL12,ZHS16, ACEL18}. In \cite{CHL12}, IMEX and geometric averaging (GA) time stepping methods have been proposed (and further developed in \cite{ACEL18}) for the Navier-Stokes equations with nonlinear interface condition.

The study of AO interaction has received considerable interest in the last thirty years, starting with the seminal paper of Lions, Temam and Wang, \cite{LTW1,LTW2}, on the analysis of full equations for AO flow. Today, many models exist and an abundance of software code is available for climate models (both global and regional), hurricane propagation, coastal weather prediction, etc. see, e.g., \cite{BWCK00, BKLG96, PSSB07} and references therein. The reasoning behind most of these models is as follows: the boundary condition on the joint AO interface must be chosen in such a way, that fluxes of conserved quantities are allowed to pass from one domain to the other. In particular, the nonlinear interface condition (\ref{eq:atmoI}), together with
(\ref{eq:atmoNONLINEAR}) ensures that the energy is being passed between the two domains in the model above, with the global energy still being conserved.

The AO coupling problem (as well as its modest version, the fluid-fluid interaction with nonlinear coupling, considered in this report) provides many challenges. In addition to the usual issues one has to overcome when solving the Navier-Stokes equations, the AO models should allow to use different spacial and temporal scales for the atmosphere and ocean domains, as the energy in the atmosphere remains significant at smaller time scales and larger spatial scales, than the energy of the ocean. In order to do so, as well as make use of the existing codes written separately for the fluid flows in the air or the ocean domains, one needs to create partitioned methods, that allow for a stable and accurate decoupling of the AO system.

The literature on numerical analysis of time-dependent coupling problem \eqref{eq:atmo}-\eqref{eq:atmoBC} is somewhat scarse; some approaches to creating a stable, accurate, computationally attractive decoupling method can be found in \cite{BK06, CHL12, CG11, LBD15, ZHS16, ACEL18}. The methods in \cite{LBD15, ACEL18} provide second order accuracy in temporal discretization. However, the authors could not find any reports on methods for approximating the solution of \eqref{eq:atmo}-\eqref{eq:atmoBC} in a turbulent regime. This problem is magnified over the usual issues in turbulence modeling, because of several extra obstacles: the size of the problem, the necessity to treat the atmosphere and ocean codes as ``black boxes'' - therefore utilizing one of only a few existing decoupling methods; and, finally, the lack of benchmark problems for turbulent AO coupling. We propose to start working in this direction by using a stabilization technique for low-viscosity problem \eqref{eq:atmo}-\eqref{eq:atmoBC}.

Various stabilizations have proven to be essential computational tools for the numerical simulations.The general idea of two level stabilization is pioneered by Marion and Xu in \cite{MJ94} and  the analysis for Navier-Stokes is presented in seminal papers \cite{G01,LG01}. This idea has been strongly connected with variational multiscale (VMS) methods were introduced in \cite{Hu95,Gue99}. VMS methods have proven to be an accurate and systematic approach to the numerical simulation of multiphysics flows and different realizations of VMS in the literature exist, e.g., see\cite{G04,RC04,jokaya1,jokaya3}.  In particular, we consider a projection-based VMS in this paper which has been proposed in \cite{L02}. According to VMS concept, global stabilization is introduced in all scales, then removes the effective stabilization on the large scales of the solution. In this way, stabilization is effective only on the smallest scales, where the non-physical oscillations occur. For more details, we refer the reader to \cite{jokaya1,jokaya3}. We also refer to \cite{V17} for the derivation of the different VMS methods for turbulent flow simulations.

Due to the success of VMS method, there is a natural desire to introduce this accurate and systematic approach to the simulation of atmosphere-ocean interaction. We consider an extension of VMS method with GA of the nonlinear interface condition. As first contribution of this paper, we first show the conservation of GA-VMS method's discrete kinetic energy, frequently evaluated quantity of interest in AO flow simulations along with stability and long-time stability properties of GA-VMS method. We show both stability bounds are unconditional, i.e., without any restriction on time step size. Secondly, we provide a precise analysis of the stability, convergence and accuracy of the GA-VMS method. Lastly, we present numerical studies in case of different viscosities compared with monolithically coupled algorithms.

The paper is organized as follows. The GA-VMS method for solving \eqref{eq:atmo}-\eqref{eq:atmoBC} in the case of high Reynolds number(s) is presented in Section 2, along with a short discussion on an alternative formulation of the method. After mathematical preliminaries are introduced in Section 3, a complete  numerical analysis is then done on the proposed method in Section 4. Finally, Section 5 provides the numerical tests that validate the theoretical findings, and conclusions are given in Section 6.

\section{GA-VMS method for atmosphere-ocean interaction problem }

In this paper, standard notations of Lebesgue and Sobolev spaces are used. The space $(L^2(\Omega))^d$ is equipped with the inner product, $(\cdot,\cdot)$ and the norm $\|\cdot\|$. In particular, the norm $L^3(I)$ at the interface will be denoted by $\|\cdot\|_I$. The Hilbert space $(H^k(\Omega))^d$ is equipped with the norm $\|\cdot\|_k$. The norm of the dual space of $(H^{-1}(\Omega))$ of $(H_0^1(\Omega))$ is denoted by $\|\cdot\|_{-1,\Omega}$. The other norms are labeled with subscripts.

For the weak formulation of problem \eqref{eq:atmo}-\eqref{eq:atmoBC}, we use the function spaces for $i=1,2$
\begin{eqnarray}
&X_{i}&  :=\{v\in (L^2(\Omega_{i}))^d:\nabla v \in L^2(\Omega_{i})^{d\times d}, \, v= 0\,\mbox{  on  }\,\partial \Omega{_{i} \backslash I, \hspace{0.1in} v\cdot \hat{n}_i = 0 \mbox{  on  } I}\},
\nonumber
\\
&Q_{i}& = L_0^2(\Omega_{i}) := \{q \in L^2(\Omega_{i}): \int_{\Omega_{i}} q \ dx = 0\}.  \nonumber
\end{eqnarray}
Herein, define $X=X_1\times X_2$  and $Q=Q_1\times Q_2$. For $u_i\in X_i$ and $q_i\in Q_i$, we denote ${\bf u}=(u_1,u_2)$ and ${\bf q}=(q_1,q_2)$, respectively.

Using these function spaces, the weak formulation of \eqref{eq:atmo}-\eqref{eq:atmoBC} is as follows: Find $(u_i,p_i)\in (X_i,Q_i)$ for $ i,j = 1,2, \ i \neq j$ such that for all $(v_{i},q_i) \in (X_i,Q_i)$
\begin{eqnarray}
(\partial_{t}u_i ,v_{i})_{\Omega_i} +\nu_i(\nabla  {u}_{i},\nabla v_{i})_{\Omega_i}
+ { c_i({u}_{i};{u}_{i},v_{i})}
-(p_i,\nabla\cdot v_i)_{\Omega_i}+(\nabla \cdot u_i,q_i)_{\Omega_i} \nonumber
\\
+\kappa \int_{I}|u_i-u_j |(u_i-u_j)v_{i}ds=(\bff_i,v_{i})_{\Omega_i}. \label{weak1}
\end{eqnarray}
Here and in the rest of the paper, $c_i(\cdot; \cdot,\cdot)$ denotes the usual, explicitly skew symmetrized trilinear form
\begin{equation}
c_i(u;v,w)=\frac{1}{2}(u \cdot \nabla v,w)_{\Omega_i}-\frac{1}{2}(u \cdot \nabla w,v)_{\Omega_i}
\end{equation}
for functions $u,v,w \in X_i$, $i=1,2 $ on $\Omega_i$.  Notice the well known property
$$
c_i(u;v,w)_{\Omega_i}=-c_i(u;w,v)_{\Omega_i}
$$
for all $u,v,w \in X_i$ such that in particular $c_i(u;v,v)=0$ for all $u,v\in X_i$.

The standard monolithic weak formulation of \eqref{eq:atmo}-\eqref{eq:atmoBC} is obtained by summing
\eqref{weak1} over for $ i,j = 1,2, \ i \neq j$ and is to find
$({\bf u},p)\in (X,Q)$  such that for all $({\bf v},{\bf q}) \in (X,Q)$
\begin{eqnarray}
(\partial_{t}{\bf u} ,{\bf v}) +\nu(\nabla  {\bf u},\nabla {\bf v})
+ { c({\bf u};{\bf u},{\bf v})}
-({\bf p},\nabla\cdot {\bf v})+(\nabla \cdot {\bf u},{\bf q})+\kappa \int_{I}[{\bf u}][{\bf u}]{\bf v}ds =({\bf f},{\bf v})_{\Omega_i}, \label{weak2}
\end{eqnarray}
where $[\cdot]$ denotes the jump across the interface $I$ and ${\bf f}=f_i$, $\nu=\nu_i$ on $\Omega_i$.

For finite element discretization, let $T_{i}^h$ and $T_i^H$ be  admissible triangulations of $\Omega_i$, where  $T_{i}^h$ refers to fine mesh and $T_i^H$ denotes the coarse mesh. Let $(X_{i}^h, { Q^h_{i}}) \subset (X_i,Q_i)$ be  conforming finite element spaces satisfying the so-called discrete inf-sup condition \cite{GR79,G89}. In our tests, we have used the velocity-pressure pairs of spaces $(P_k,P_{k-1}),\,k\geq 2$.
Let $V_{i}^h$ be the space of the discretely divergence-free functions
\begin{eqnarray}
V_{i}^h=\{v_{h,i}\in X_i^h: (q_{h,i},\nabla \cdot v_{h,i})=0, \, for \,\, all \,\, q_{h,i} \in Q_{i}^h\},
\end{eqnarray}
which is a closed subspace of $X_i^h$. The dual space of $V_i^h$ is given by $V_i^{h*}$ with norm $\|\cdot\|_{V_i^{h*}}$.We also need to introduce the space
\begin{eqnarray}
L^{\infty}(\mathbb{R}^+,V_i^{h*})&=&\{f_i:\Omega_{i}^d\times\mathbb{R}^+\to \mathbb{R}, \exists M<\infty \, with \,\, \|f_i(t)\|_{V_i^{h*}}<M \, a.e.\, t>0\}
\end{eqnarray}

To solve two decoupled systems (atmosphere and ocean separately) through GA on the interface with the projection-based VMS formulation, let $L_i^H \subset (L^2(\Omega))^{d\times d} $ be a finite dimensional space of functions defined on $\Omega_i$ representing a coarse or large scale space and let  $\nu_T$  be eddy viscosity term assumed herein a non-negative function depending on the mesh size $h$.

We now present the projection-based VMS discretization of \eqref{weak1} by using the Euler method in time. For this purpose, consider a partition $0=t_0<t_1<\dots<t_M=T$ of the time interval $[0,T]$ and define $\Delta t=T/M$, $t_n=n\Delta t$. GA-VMS formulation applied to the problem \eqref{weak1} reads as follows: Find $(u_{h,i}^{n+1},p_{h,i}^{n+1},\mathbb{G}_i^{\mathbb{H},{n+1}})\in (X_{i}^h,Q_{i}^h,L_i^H)$ satisfying
\begin{eqnarray}
(\frac{{u^{n+1}_{h,i}}-u_{h,i}^{n}}{\Delta t} ,v_{h,i})_{\Omega_i} +(\nu_i+\nu_T)(\nabla  u_{h,i}^{n+1},\nabla v_{h,i})_{\Omega_i} +c_i(u_{h,i}^{n+1}; u_{h,i}^{n+1},v_{h,i})
-(p_{h,i}^{n+1},\nabla \cdot v_{h,i}) \nonumber
\\
 + (\nabla\cdot u_{h,i}^{n+1},q_{h,i})_{\Omega_i}+\kappa \int_{I}|[{\bf u}_h^n]|u_{h,i}^{n+1}v_{h,i}ds
-\kappa \int_{I}u_{h,j}^n|[{\bf u}_h^n]|^{1/2}|[{\bf u}_h^{n-1}]|^{1/2}v_{h,i}ds \nonumber
\\
=(\bff_i^{n+1},v_{h,i})_{\Omega_i}+\nu_T(\mathbb{G}_i^{\mathbb{H},n},\nabla v_{h,i}) \label{BE7}\\
(\mathbb{G}_i^{\mathbb{H},n}-\nabla u_{h,i}^n, \mathbb{L}_i^H)_{\Omega_i}=0,\label{alg4}
\end{eqnarray}
for all $(v_{h,i},q_{h,i},\mathbb{L}_i^H) \in (X_{i}^h,Q_{i}^h,L_i^H)$.

\begin{remark}
In \eqref{alg4}, the tensor $\mathbb{G}_i^{\mathbb{H},n}$ represents the large scales
of $\nabla u_{h,i}$, defined by $L^2$-projection of $\nabla u_{h,i}^n$ on $\Omega_i$ into the large scale space $L_i^H$ (see Definition \ref {defpro}). Hence, the difference $\mathbb{G}_i^{\mathbb{H},n}-\nabla u_{h,i}^n$  represents the resolved small scales. This way, the GA-VMS method \eqref{BE7}-\eqref{alg4} introduces the additional viscous term into the momentum equation acting only on the resolved small scales.
We note that the $L^2$- projection terms for $\mathbb{G}_i^{\mathbb{H},n}$ can be discretized implicitly or explicitly in time. We will consider here the computationally attractive explicit discretization, and refer the reader to \cite{jokaya3,JK08} for further discussions on explicit vs. implicit discretizations of $\mathbb{G}_i^{\mathbb{H},n}$.
\end{remark}
\begin{remark}
In GA-VMS formulation of \eqref{BE7}-\eqref{alg4}, the large scale spaces $ L_i^H$ and $\nu_T$ parameters must be chosen. The first approach is to define $L_i^H$  using in lower order finite element spaces on the same mesh, provided that finite element spaces $(X_{i}^h,Q_{i}^h)$ are high enough order. Second approach is to define $L_i^H$ on a coarser grid than $(X_{i}^h,Q_{i}^h)$,  see, e.g., \cite{jokaya3,JK08}.  Herein, we will use the first way  which is the most common choice in geophysical problems. Thus, we choose $ L_i^H$ to be piecewise polynomials of degree $k-1$. The choice of the parameter is $\nu_T=h$ is typical for various artificial viscosity-type models.
\end{remark}
With the discrete inf-sup condition, GA-VMS formulation (\ref{BE7})-(\ref{alg4}) can be computed equivalently solving: Find  $(u_{h,1}^{n+1},u_{h,2}^{n+1},\mathbb{G}_1^{\mathbb{H},{n+1}},\mathbb{G}_2^{\mathbb{H},{n+1}}) \in (V_1^h,V_2^h,L_1^H,L_2^H)$ such that
\begin{eqnarray}
\lefteqn{(\frac{{u^{n+1}_{h,1}}-u_{h,1}^{n}}{\Delta t} ,v_{h,1})_{\Omega_1} +(\nu_1+\nu_T)(\nabla  u_{h,1}^{n+1},\nabla v_{h,1})_{\Omega_1}+ c_1(u_{h,1}^{n+1}; u_{h,1}^{n+1},v_{h,1})_{\Omega_1}}
\nonumber\\
&&+\kappa \int_{I}|[\bu_h^n]|u_{h,1}^{n+1}v_{h,1}ds-\kappa \int_{I}u_{h,2}^n|[\bu_h^n]|^{1/2}|[\bu_h^{n-1}]|^{1/2}v_{h,1}ds\nonumber\\
&=&(\bff_1^{n+1},v_{h,1})_{\Omega_1}+\nu_T(\mathbb{G}_1^{\mathbb{H},n},\nabla v_{h,1}), \label{alg1}
\\
\lefteqn{(\mathbb{G}_1^{\mathbb{H},n}-\nabla u_{h,1}^n, \mathbb{L}_1^H)_{\Omega_i}=0, }\label{alg101}
\end{eqnarray}
and
\begin{eqnarray}
\lefteqn{(\frac{{u^{n+1}_{h,2}}-u_{h,2}^{n}}{\Delta t} ,v_{h,2})_{\Omega_2} +(\nu_2+\nu_T)(\nabla  {u}_{h,2}^{n+1},\nabla v_{h,2}^h)_{\Omega_2}
	+ c_2({u}_{h,2}^{n+1};{u}_{h,2}^{n+1},v_{h,2})_{\Omega_2}}
\nonumber\\
&&+\kappa \int_{I}|[\bu_h^n]|u_{h,2}^{n+1}v_{h,2}ds-\kappa \int_{I}u_{h,1}^n|[\bu_h^n]|^{1/2}|[\bu_h^{n-1}]|^{1/2}v_{h,2}ds\nonumber\\
&=&(\bff_2^{n+1},v_{h,2})_{\Omega_2}+\nu_T(\mathbb{G}_2^{\mathbb{H},n},\nabla v_{h,2}), \label{alg2} \\[5pt]
\lefteqn{(\mathbb{G}_2^{\mathbb{H},n}-\nabla u_{h,2}^n, \mathbb{L}_2^H)_{\Omega_i}=0, }\label{alg100}
\end{eqnarray}
for all $(v_{h,1},v_{h,2},\mathbb{L}_1^H,\mathbb{L}_2^H ) \in (V_1^h, V_2^h,L_1^H,L_2^H)$.

\begin{remark}\label{remark:alternative_approach}
Notice that the GA-VMS method (\ref{alg1})-(\ref{alg100}) is derived, based on the variational formulation (\ref{weak1}) - or, equivalently, one could derive (\ref{alg1})-(\ref{alg100}) from \eqref{eq:atmo}-\eqref{eq:atmoBC}, but the coefficients $\nu_i$ would need to be replaced with $\nu_i+\nu_T$ in (\ref{eq:atmoI}). If, however, one tried to create a GA-VMS method from \eqref{eq:atmo}-\eqref{eq:atmoBC}, all the interface integrals in (\ref{alg1})-(\ref{alg100}) would be multiplied by $\frac{\nu_i+\nu_T}{\nu_i}$. Numerical tests show that this alternative approach fails to provide good quality approximations when $\nu_i$ are small.
\end{remark}

\section{Mathematical Preliminaries}

In this section, some inequalities and definitions are introduced. The following lemmas are required for the analysis. 
\begin{lemma} \label{Standard_ineqs} Let $\alpha, \beta, \theta \in H^1(\Omega_i)$  for $i=1,2$,  then there exists constants $ C(\Omega_i)>0$ such that
	\begin{eqnarray}
	c_i({\alpha};{\beta},\theta)_{\Omega_i}&\leq& C (\Omega_i) \|\alpha\|^{1/2}_{\Omega_i}\|\nabla \alpha\|_{\Omega_i}^{1/2}\|\nabla \beta\|_{\Omega_i}\|\nabla \theta\|_{\Omega_i},\nonumber\\
	\int_{I}\alpha |[\beta]| \theta &\leq& C(\Omega_i) \|\alpha \|_I ||[\beta]||_I \|\theta \|_I, \nonumber\\
	\|\alpha\|_I &\leq& C(\Omega_i) \left({\color{red} } \|\alpha\|^{1/4}_{\Omega_i}\|\nabla \alpha\|^{3/4}_{\Omega_i} { + \|\alpha\|^{1/6}_{\Omega_i}\|\nabla \alpha\|^{5/6}_{\Omega_i} }\right).
	\end{eqnarray}
\end{lemma}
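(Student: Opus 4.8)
The plan is to obtain all three bounds from the same toolkit: Hölder's inequality, the Sobolev embedding and Ladyzhenskaya--Gagliardo--Nirenberg interpolation inequalities, the Sobolev trace theorem, and the Poincaré--Friedrichs inequality. The last of these is available throughout because every function in $X_i$ vanishes on $\Gamma_i=\partial\Omega_i\setminus I$, so that $\|\beta\|_{\Omega_i}\le C(\Omega_i)\|\nabla\beta\|_{\Omega_i}$ and $\|\beta\|_{H^1(\Omega_i)}\le C(\Omega_i)\|\nabla\beta\|_{\Omega_i}$. I would dispatch the three estimates in order of increasing difficulty.

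The interface bound is immediate: since $\|\cdot\|_I$ denotes the $L^3(I)$ norm and $\tfrac13+\tfrac13+\tfrac13=1$, the generalized Hölder inequality applied on $I$ to the three factors $\alpha$, $|[\beta]|$ and $\theta$ gives $\int_I \alpha\,|[\beta]|\,\theta \le \|\alpha\|_I\,\|[\beta]\|_I\,\|\theta\|_I$ directly. For the trilinear term I would start from the definition $c_i(\alpha;\beta,\theta)=\tfrac12(\alpha\cdot\nabla\beta,\theta)_{\Omega_i}-\tfrac12(\alpha\cdot\nabla\theta,\beta)_{\Omega_i}$ and estimate each transport term by Hölder with exponents $(4,2,4)$, placing the convecting field and the remaining factor in $L^4(\Omega_i)$ and the gradient in $L^2(\Omega_i)$. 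Applying Ladyzhenskaya's inequality $\|v\|_{L^4(\Omega_i)}\le C(\Omega_i)\|v\|_{\Omega_i}^{1/2}\|\nabla v\|_{\Omega_i}^{1/2}$ to the two $L^4$ factors, and then using Poincaré--Friedrichs to absorb the $L^2$ norms of $\beta$ and $\theta$ into their gradient norms, collapses the right-hand side to exactly $C(\Omega_i)\|\alpha\|_{\Omega_i}^{1/2}\|\nabla\alpha\|_{\Omega_i}^{1/2}\|\nabla\beta\|_{\Omega_i}\|\nabla\theta\|_{\Omega_i}$; this is the two-dimensional form of the estimate, the three-dimensional analogue following from the corresponding Gagliardo--Nirenberg exponents.

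The trace-interpolation bound is the one I expect to require the most care, and it is where the two terms on the right-hand side come from: they correspond to the two admissible space dimensions $d=2$ and $d=3$, the interface $I$ being $(d-1)$-dimensional. I would combine the Sobolev trace theorem with a fractional interpolation inequality. Concretely, for each $d$ I would first determine the order $s_d$ for which the trace map sends $H^{s_d}(\Omega_i)$ continuously into the appropriate $L^q(I)$: an embedding count gives $s_3=5/6$ for the two-dimensional interface ($d=3$, $q=3$), while for $d=2$ it is convenient to pass through $L^4(I)\hookrightarrow L^3(I)$ (the interface being bounded), for which $s_2=3/4$ suffices. I would then interpolate $\|\alpha\|_{H^{s_d}(\Omega_i)}\le C(\Omega_i)\|\alpha\|_{\Omega_i}^{1-s_d}\|\alpha\|_{H^1(\Omega_i)}^{s_d}$ and apply Poincaré--Friedrichs to replace $\|\alpha\|_{H^1(\Omega_i)}$ by $\|\nabla\alpha\|_{\Omega_i}$. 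This yields $\|\alpha\|_I\le C(\Omega_i)\|\alpha\|_{\Omega_i}^{1/4}\|\nabla\alpha\|_{\Omega_i}^{3/4}$ when $d=2$ and $\|\alpha\|_I\le C(\Omega_i)\|\alpha\|_{\Omega_i}^{1/6}\|\nabla\alpha\|_{\Omega_i}^{5/6}$ when $d=3$; retaining both terms on the right produces a single inequality valid in either dimension.

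The genuine obstacle is thus confined to this last estimate: correctly identifying the trace exponents $s_d$ and justifying the boundary embeddings $H^{s_d-1/2}(\partial\Omega_i)\hookrightarrow L^{q_d}(I)$ (with $q_3=3$, $q_2=4$) according to the dimension of the interface, after which the interpolation and Poincaré steps are routine. The other two bounds reduce to bookkeeping with Hölder, Ladyzhenskaya, and Poincaré, and all constants are absorbed into $C(\Omega_i)$.
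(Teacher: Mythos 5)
Your proposal is substantially correct, but it is worth saying that it is doing more than the paper does: the paper's ``proof'' of this lemma is pure citation (Lemma 2.1 of \cite{CHL12} for the first two bounds, Theorem II.4.1 of \cite{Galdi94} for the third), so what you have written is a reconstruction of the content of those references. Your reconstruction of the third bound is exactly right, and this is its least obvious part: the two terms on the right-hand side are the $d=2$ and $d=3$ cases of a multiplicative trace estimate, with exponents $3/4$ (trace $H^{3/4}(\Omega_i)\to H^{1/4}(I)\hookrightarrow L^4(I)\hookrightarrow L^3(I)$ for a one-dimensional interface) and $5/6$ (trace $H^{5/6}(\Omega_i)\to H^{1/3}(I)\hookrightarrow L^3(I)$ for a two-dimensional interface), followed by the interpolation $\|\alpha\|_{H^s}\le C\|\alpha\|^{1-s}\|\alpha\|_{H^1}^s$; your exponent counts check out. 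You are also more careful than the lemma's own statement in observing that replacing $\|\alpha\|_{H^1(\Omega_i)}$ by $\|\nabla\alpha\|_{\Omega_i}$ requires Poincar\'e--Friedrichs, hence membership in $X_i$ rather than just $H^1(\Omega_i)$: for a nonzero constant function the stated right-hand sides of the first and third inequalities vanish while the left-hand sides do not, so the lemma is genuinely only valid on the subspace where the paper applies it.

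The one step that fails as written is the three-dimensional case of the trilinear bound. With the H\"older split $(4,2,4)$, the three-dimensional Ladyzhenskaya inequality is $\|v\|_{L^4}\le C\|v\|^{1/4}\|\nabla v\|^{3/4}$, which after Poincar\'e yields $C\|\alpha\|^{1/4}\|\nabla\alpha\|^{3/4}\|\nabla\beta\|\,\|\nabla\theta\|$, and there is no way to pass from $\|\alpha\|^{1/4}\|\nabla\alpha\|^{3/4}$ to the claimed $\|\alpha\|^{1/2}\|\nabla\alpha\|^{1/2}$ (that direction would require $\|\nabla\alpha\|\le C\|\alpha\|$, which is false). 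So ``the corresponding Gagliardo--Nirenberg exponents'' cannot mean keeping your split; the split itself must change in $d=3$. The correct argument there uses $(3,2,6)$: bound $|(\alpha\cdot\nabla\beta,\theta)_{\Omega_i}|\le\|\alpha\|_{L^3}\|\nabla\beta\|\,\|\theta\|_{L^6}$, then $\|\alpha\|_{L^3}\le C\|\alpha\|^{1/2}\|\alpha\|_{H^1}^{1/2}$ by Gagliardo--Nirenberg and $\|\theta\|_{L^6}\le C\|\theta\|_{H^1}$ by Sobolev embedding, and finish with Poincar\'e--Friedrichs on both factors; the skew-symmetric partner term is handled by exchanging the roles of $\beta$ and $\theta$, which produces the same right-hand side. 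With that replacement your argument is complete in both dimensions.
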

\begin{proof}
The first two bounds are standard - see, e.g., Lemma 2.1 on p. 1301 of \cite{CHL12}. The third bound can be found in \cite{Galdi94}, see Theorem II.4.1, p. 63.
\end{proof}
\begin{lemma} \label{lem:nnl}Let $\alpha_i \in X_i$, $\theta_j \in X_j$, $\boldsymbol{\beta} \in H^1(\Omega_i)$ and  $\epsilon_i,\epsilon_j,\varepsilon_i,\varepsilon_j$ ($i,j=1,2$) be positive constants, then one
\begin{eqnarray}
\kappa \int_{I} |\alpha_i| |[ \boldsymbol{\beta}]||\theta_j|&\leq& \frac{C\kappa^2}{4}\|\alpha_i\|^2_I ||[ \boldsymbol{\beta}]||_I^2+\frac{\epsilon_j}{\nu_j^5}\|\theta_j\|^2_{\Omega_j}+\frac{\nu_j}{2\epsilon_j}\|\nabla \theta_j\|^2_{\Omega_j},\\
	\kappa \int_{I} |\alpha_i| |[ \boldsymbol{\beta}]||\theta_j|&\leq& C\kappa^6\Big(\frac{\epsilon_i^5}{\nu_i^5}||[ \boldsymbol{\beta}]||_I^6\|\alpha_i\|^2_{\Omega_i}+\frac{\varepsilon_j^5}{\nu_j^5}||[ \boldsymbol{\beta}]||_I^6\|\theta_j\|^2_{\Omega_j}\Big) \nonumber\\&&+\frac{\nu_i}{4\epsilon_i}\|\nabla \alpha_i\|^2+\frac{\nu_j}{4\varepsilon_j} \|\nabla \theta_j\|^2,\\
	\kappa \int_{I} |\alpha_i| |[ \boldsymbol{\beta}]||\theta_j|&\leq&C\kappa^6\|\alpha_i\|^6_I\Big(\frac{\epsilon_1^5}{\nu_1^5}\|\beta_1\|_{\Omega_1}^2+\frac{\epsilon_2^5}{\nu_2^5}\|\beta_2\|^2_{\Omega_2}+\frac{2\varepsilon_j^5}{\nu_j^5}\|\theta_j\|_{\Omega_j}^2\Big)\nonumber\\&&+\frac{\nu_1}{4\epsilon_1}\|\nabla \beta_1\|_{\Omega_1}^2+\frac{\nu_2}{4\epsilon_2}\|\nabla \beta_2\|^2_{\Omega_2}+\frac{\nu_j}{2\beta_j}\|\nabla \theta_j\|_{\Omega_j}^2.
\end{eqnarray}
\end{lemma}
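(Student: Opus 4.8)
The three estimates share a common skeleton, so my plan is to isolate the one mechanism that drives all of them and then explain how the three variants distribute the work differently. In each case I start from the interface product bound (the second estimate of Lemma \ref{Standard_ineqs}),
\[
\kappa\int_{I}|\alpha_i|\,|[\boldsymbol\beta]|\,|\theta_j|\ \le\ C\kappa\,\|\alpha_i\|_I\,\|[\boldsymbol\beta]\|_I\,\|\theta_j\|_I,
\]
and then decide, estimate by estimate, which of the three interface factors to keep measured in $\|\cdot\|_I$ and which to trade for interior $L^2$ and gradient norms. The conversion of any factor $\phi$ I wish to push into a subdomain is always done by the square of the dominant ($5/6$) branch of the trace–interpolation bound (the third estimate of Lemma \ref{Standard_ineqs}),
\[
\|\phi\|_I^2\ \le\ C\,\|\phi\|_{\Omega}^{1/3}\,\|\nabla\phi\|_{\Omega}^{5/3},
\]
followed by a weighted Young inequality with conjugate exponents $(6,6/5)$, tuned to peel off $\|\nabla\phi\|_\Omega^2$ and leave an $\|\phi\|_\Omega^2$ remainder. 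The exponent $5/3$ is exactly what forces both the sixth powers and the factor $\nu^{-5}$ that appear throughout.

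For the first estimate I would retain $\alpha_i$ and $[\boldsymbol\beta]$ on the interface, splitting the triple product by Young as $C\kappa\|\alpha_i\|_I\|[\boldsymbol\beta]\|_I\cdot\|\theta_j\|_I\le \tfrac{C\kappa^2}{4}\|\alpha_i\|_I^2\|[\boldsymbol\beta]\|_I^2+C\|\theta_j\|_I^2$, which already produces the first term of the claim. It then remains only to convert $\|\theta_j\|_I^2$: applying the interpolation bound above and the weighted Young step to $\|\theta_j\|_{\Omega_j}^{1/3}\|\nabla\theta_j\|_{\Omega_j}^{5/3}$ yields the gradient term $\tfrac{\nu_j}{2\epsilon_j}\|\nabla\theta_j\|_{\Omega_j}^2$ together with an $L^2$ remainder whose coefficient, after the balancing choice of the Young scaling parameter, is the stated $\tfrac{\epsilon_j}{\nu_j^5}$.

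The second and third estimates use the same two moves with a different routing. For the second I would split the triple product symmetrically, $C\kappa\|[\boldsymbol\beta]\|_I\big(\tfrac12\|\alpha_i\|_I^2+\tfrac12\|\theta_j\|_I^2\big)$, and convert both $\|\alpha_i\|_I^2$ and $\|\theta_j\|_I^2$; in each weighted Young step the retained prefactor $\kappa\|[\boldsymbol\beta]\|_I\|\cdot\|_\Omega^{1/3}$ is raised to the sixth power, giving precisely the couplings $\kappa^6\|[\boldsymbol\beta]\|_I^6\|\alpha_i\|_{\Omega_i}^2$ and $\kappa^6\|[\boldsymbol\beta]\|_I^6\|\theta_j\|_{\Omega_j}^2$ with the accompanying $\tfrac{\nu_i}{4\epsilon_i}\|\nabla\alpha_i\|^2$ and $\tfrac{\nu_j}{4\varepsilon_j}\|\nabla\theta_j\|^2$. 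For the third I would instead keep $\alpha_i$ on the interface and convert $[\boldsymbol\beta]$ and $\theta_j$; after splitting the jump as $\|[\boldsymbol\beta]\|_I^2\le 2\|\beta_1\|_I^2+2\|\beta_2\|_I^2$ and applying the interpolation-plus-Young step to $\beta_1,\beta_2,\theta_j$ with $\kappa\|\alpha_i\|_I$ now the retained factor, the sixth power lands on $\|\alpha_i\|_I$, yielding the three interior terms $\tfrac{\epsilon_1^5}{\nu_1^5}\|\beta_1\|_{\Omega_1}^2$, $\tfrac{\epsilon_2^5}{\nu_2^5}\|\beta_2\|_{\Omega_2}^2$, $\tfrac{2\varepsilon_j^5}{\nu_j^5}\|\theta_j\|_{\Omega_j}^2$ (the factor $2$ inherited from the jump split) together with the stated gradient terms.

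The genuinely routine part is the repeated invocation of Young's inequality; the only real care is bookkeeping. Concretely, the point to get right is the choice of scaling weight in each weighted Young step so that the gradient coefficients emerge as the prescribed $\tfrac{\nu}{4\epsilon}$ or $\tfrac{\nu}{2\epsilon}$ while the $L^2$ coefficients come out as $\tfrac{\epsilon^5}{\nu^5}$, which is what dictates the $\nu^{-5}$ powers and the sixth powers of $\kappa$ and of the interface norms. The one place that deserves attention is that the milder ($3/4$) branch of the trace bound produces structurally analogous but lower-order contributions (fourth powers and $\nu^{-3}$-type coefficients); I would verify that in the small-viscosity regime of interest these are dominated by, and can be subsumed into, the sixth-power terms already recorded, so that the estimates hold exactly as stated.
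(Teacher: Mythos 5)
Your proposal follows essentially the same route as the paper: the paper's entire proof of Lemma \ref{lem:nnl} is the one-line citation ``Use Lemma \ref{Standard_ineqs} and Young's inequality (see Lemma 2.2 of \cite{CHL12})'', and your combination of the generalized H\"older bound on $I$, the trace--interpolation inequality, and weighted Young's inequality with conjugate exponents $(6,6/5)$ is precisely the expansion of that citation. The bookkeeping you describe (sixth powers of $\kappa$ and of the retained interface norms, $\nu^{-5}$ coefficients on the $L^2$ remainders, gradient terms with coefficients $\nu/4\epsilon$ or $\nu/2\epsilon$) is the correct mechanism, and your routing of which factors stay on the interface in each of the three estimates matches what the statement requires.

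One correction to the step you flag at the end. Absorbing the $3/4$ branch of the trace inequality by a ``small-viscosity'' argument does not work in general: in the second and third estimates that branch produces terms of the form $\kappa^4\|[\boldsymbol\beta]\|_I^4\,\nu^{-3}\|\cdot\|_{\Omega}^2$, and the domination $\kappa^4X^4\le C\kappa^6X^6$ fails whenever $\kappa X$ is small, no matter how small $\nu$ is. The clean resolution is Poincar\'e--Friedrichs: since $\alpha_i\in X_i$ and $\theta_j\in X_j$ vanish on $\Gamma_i$, $\Gamma_j$, one has $\|\phi\|_{\Omega}\le C_p\|\nabla\phi\|_{\Omega}$, hence
\[
\|\phi\|_{\Omega}^{1/4}\|\nabla\phi\|_{\Omega}^{3/4}
=\Big(\tfrac{\|\phi\|_{\Omega}}{\|\nabla\phi\|_{\Omega}}\Big)^{1/12}\|\phi\|_{\Omega}^{1/6}\|\nabla\phi\|_{\Omega}^{5/6}
\le C_p^{1/12}\,\|\phi\|_{\Omega}^{1/6}\|\nabla\phi\|_{\Omega}^{5/6},
\]
so the trace bound of Lemma \ref{Standard_ineqs} collapses to the single $5/6$ branch (with a domain-dependent constant) and the lower-order contributions never arise at all. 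With this replacement your argument goes through exactly as written; the same remark applies to the components of $\boldsymbol\beta$ converted in the third estimate, which in the paper's applications are likewise velocity fields vanishing on the outer boundaries.
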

\begin{proof}
{ Use Lemma \ref{Standard_ineqs} and Young's inequality (see Lemma 2.2 on p. 1302 of \cite{CHL12}).}
\end{proof}
Denoting the corresponding Galerkin approximations of $(u_i,p_i)$ in $(X_i^h,Q_i)$ by $(v_{h,i},q_{h,i})$,  one can assume that the following approximation assumptions (see \cite{GR79}):
\begin{eqnarray}
\inf_{v_{h,i} \in X_i^h} \Big(\|u_i-v_{h,i}\|+\|\nabla(u_i-v_{h,i})\|\Big)&\leq& Ch^{k+1} \|u_i\|_{k+1},\label{inp2}\\\inf_{q_{h,i} \in Q_i^h} \|p_i-q_{h,i}\|&\leq& Ch^{k} \|p_i\|_{k}. \label{inp}
\end{eqnarray}
The  $L^2$ projection is defined in the usual way.
\begin{definition} \label{defpro}
The $L^2$ projection ${P}^H $of a given function $\mathbb{L}$ onto the finite element space $L_i^H$ is the solution of the following : find $\hat{\mathbb{L}}_i= {P}^H \mathbb{L}_i\in L_i^H$ such that
\begin{eqnarray} \label{pro}
(\mathbb{L}_i-{P}^H \mathbb{L}_i, S_H)=0,
\end{eqnarray}
for all $S_H \in L_i^H$.
\end{definition}
Hence, we get
\begin{eqnarray}\label{pro2}
	\|\mathbb{L}_i-{P}^H \mathbb{L}_i\| \leq CH^k\|\mathbb{L}_i\|_{k+1},
\end{eqnarray}
for all $\mathbb{L} \in (L(\Omega_i))^{d\times d}\cap (H^{k+1}(\Omega_i))^{d\times d}$.

We note that while the larger choice of the coarse mesh size $H$ provides more efficient projections into large scale spaces $L_i^H$ and reduces storage, the accuracy of the solutions decreases. For $k=2$, the typical choice is $H=O(h^{1/2})$ for the projection-based VMS. This choice is obtained from balancing terms in the convergence analysis. In our numerical studies, we will use single mesh, that is $H=h$. Although, it is expensive (particularly in $3d$) because of storing the velocity gradient will be the same as storing three additional velocities, it is also good way of programming since there will be less bookkeeping. As we will show later this choice also provides good accuracy.

We also use Poincar\'e-Friedrichs inequality as: There exists a constant $C_p$ such that
\begin{eqnarray}
\|u_{h,i}\|_{\Omega_i}\leq C_p\|\nabla u_{h,i}\|_{\Omega_i},\quad \forall u_{h,i} \in X_i^h
\end{eqnarray}
holds.
Along the paper, we use the following inequality whose proof can be found in \cite{HR86}.
\begin{lemma}\label{gron}[Discrete Gronwall Lemma]
	Let $\gamma_i,\theta_i,\beta_i,\alpha_i$ (for $i\geq 0$), and $\Delta t$, C be a non-negative numbers such that
	\begin{eqnarray*}
		\gamma_M +\Delta t \sum_{i=0}^{M} \theta_i \leq \Delta t \sum_{i=0}^{M} \alpha_i\gamma_i +\Delta t \sum_{i=0}^{M} \beta_i + C,\, \, \forall M\geq 0.
	\end{eqnarray*}
	Assume $\alpha_i\Delta t <1$ for all $i$, then,
	\begin{eqnarray*}
		\gamma_M +\Delta t \sum_{i=0}^{M} \theta_i \leq \exp\Bigg(\Delta t \sum_{i=0}^{M}\theta_i \frac{\alpha_i}{1-\alpha_i\Delta t }\Bigg) \Bigg(\Delta t \sum_{i=0}^{M} \beta_i + C\Bigg),\, \, \forall M\geq 0.
	\end{eqnarray*}
\end{lemma}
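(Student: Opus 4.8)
The plan is to prove the standard Heywood--Rannacher discrete Gronwall inequality; note that, as written, the factor $\theta_i$ appearing inside the exponential is almost certainly a typographical slip, and the intended (and correct) exponent is $\exp\!\big(\Delta t\sum_{i=0}^{M}\frac{\alpha_i}{1-\alpha_i\Delta t}\big)$, which is what I would establish. Throughout, write $D_M:=\Delta t\sum_{i=0}^{M}\beta_i+C$ for the data term, $\sigma_i:=(1-\alpha_i\Delta t)^{-1}$ (well defined and $\geq 1$ by the hypothesis $\alpha_i\Delta t<1$), and $\Pi_M:=\prod_{m=0}^{M}\sigma_m$ with the convention $\Pi_{-1}=1$. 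Since all the $\beta_i$ and $C$ are nonnegative, $D_M$ is nondecreasing in $M$; this monotonicity will be used repeatedly to replace $D_n$ by $D_M$ for $n\leq M$.

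First I would reduce the hypothesis to a recursion by isolating the top index. Splitting off the $i=M$ term in $\Delta t\sum_{i=0}^{M}\alpha_i\gamma_i$ and moving $\Delta t\,\alpha_M\gamma_M$ to the left --- which is legitimate precisely because $\alpha_M\Delta t<1$ --- turns the assumed inequality into
\[ (1-\alpha_M\Delta t)\gamma_M+\Delta t\sum_{i=0}^{M}\theta_i \leq \Delta t\sum_{i=0}^{M-1}\alpha_i\gamma_i + D_M. \]
The heart of the argument is then an induction on $M$ with inductive claim $\gamma_M+\Delta t\sum_{i=0}^{M}\theta_i\leq \Pi_M D_M$. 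In the inductive step, the claim for $n\leq M-1$ gives $\gamma_n\leq \Pi_n D_n\leq \Pi_n D_M$, which I would substitute into the sum $\Delta t\sum_{i=0}^{M-1}\alpha_i\gamma_i$ above.

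The key algebraic fact --- and what I expect to be the one step worth checking carefully --- is the telescoping identity $\Delta t\,\alpha_i\sigma_i=\sigma_i-1$, equivalently $\Delta t\,\alpha_i\Pi_i=\Pi_i-\Pi_{i-1}$. Using it, the substituted sum collapses: $\Delta t\sum_{i=0}^{M-1}\alpha_i\Pi_i=\Pi_{M-1}-\Pi_{-1}=\Pi_{M-1}-1$, so the right-hand side of the displayed inequality is bounded by $(\Pi_{M-1}-1)D_M+D_M=\Pi_{M-1}D_M$. Multiplying through by $\sigma_M$ (and using $1-\alpha_M\Delta t\leq 1$ to keep the $\theta$-sum on the left) then yields $\gamma_M+\Delta t\sum_{i=0}^{M}\theta_i\leq\sigma_M\Pi_{M-1}D_M=\Pi_M D_M$, closing the induction; the base case $M=0$ is the same computation with an empty sum.

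Finally I would convert the product into the claimed exponential by the elementary scalar inequality $\tfrac{1}{1-x}\leq \exp\!\big(\tfrac{x}{1-x}\big)$ for $0\leq x<1$ (immediate from $-\ln(1-x)\leq \tfrac{x}{1-x}$). Applying it to each $\sigma_m=\frac{1}{1-\alpha_m\Delta t}$ gives $\Pi_M\leq\exp\!\big(\Delta t\sum_{m=0}^{M}\frac{\alpha_m}{1-\alpha_m\Delta t}\big)$, and combining this with $\gamma_M+\Delta t\sum_{i=0}^{M}\theta_i\leq\Pi_M D_M$ produces exactly the asserted bound. The only genuinely delicate points are the sign and monotonicity bookkeeping (all sequences nonnegative, $D_M$ nondecreasing, $\sigma_i\geq 1$) and the telescoping identity; everything else is routine.
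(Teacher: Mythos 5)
Your proof is correct, and your diagnosis of the statement is the right one to make explicit: the paper itself gives no proof of this lemma, deferring entirely to the citation [HR86] (Heywood--Rannacher), so there is no in-paper argument to compare against; your induction with the telescoping identity $\Delta t\,\alpha_i\Pi_i=\Pi_i-\Pi_{i-1}$ and the scalar bound $\tfrac{1}{1-x}\leq\exp\bigl(\tfrac{x}{1-x}\bigr)$ is precisely the standard argument from that reference. Your reading of the stray $\theta_i$ inside the exponential as a typographical slip is also correct and necessary: as literally written the lemma is false (take $\theta_i\equiv 0$, so the exponential factor degenerates to $1$ and the conclusion $\gamma_M\leq \Delta t\sum_{i=0}^{M}\beta_i+C$ fails for any genuinely growing $\gamma_i$ satisfying the hypothesis), whereas the version you prove, with exponent $\Delta t\sum_{i=0}^{M}\frac{\alpha_i}{1-\alpha_i\Delta t}$, is the form stated in the cited reference and the form actually used later in the paper's convergence proof. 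The one step worth double-checking, multiplying through by $\sigma_M$ while keeping $\Delta t\sum_{i=0}^{M}\theta_i$ on the left, you handle correctly via $\sigma_M\geq 1$.
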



\section{Energy conservation and stability properties of GA-VMS method }

This section considers the energy balance and the stability for the GA-VMS scheme. We first show that the scheme admits an energy balance which is analogous to balances for the continuous AO. Next, we prove its unconditional stability and long-time $L^2$ stability of velocity.
\begin{lemma}(Global energy conservation) The scheme \eqref{alg1}-\eqref{alg100} admits the following energy conservation law:
	\begin{eqnarray}
	\lefteqn{\|{u^{M+1}_{h,1}}\|_{\Omega_1}^2 +\|{u^{M+1}_{h,2}}\|_{\Omega_2}^2 + \nu_T\Delta t\big(\|\nabla  u_{h,1}^{M+1}\|^2_{\Omega_1}+\|\nabla  u_{h,2}^{M+1}\|^2_{\Omega_2}\big)} \nonumber \\
	&&+\Delta t \sum_{n=1}^M(\|{{u^{n+1}_{h,1}}-u_{h,1}^{n}}\|_{\Omega_1}^2+\|{{u^{n+1}_{h,2}}-u_{h,2}^{n}}\|_{\Omega_2}^2)\nonumber\\
	&&+\Delta t \sum_{n=1}^M \Big(2\nu_1\|\nabla  u_{h,1}^{n+1}\|^2_{\Omega_1} + \nu_T\|\nabla  u_{h,1}^{n+1} - \mathbb{G}_1^{\mathbb{H},n}\|^2_{\Omega_1}+\nu_T \|\nabla  u_{h,1}^{n} - \mathbb{G}_1^{\mathbb{H},n}\|^2_{\Omega_1} \Big)\nonumber\\
	&&+\Delta t \sum_{n=1}^M \Big(2\nu_2\|\nabla  u_{h,2}^{n+1}\|^2_{\Omega_2} + \nu_T\|\nabla  u_{h,2}^{n+1} - \mathbb{G}_2^{\mathbb{H},n}\|^2_{\Omega_2}+\nu_T \|\nabla  u_{h,2}^{n} - \mathbb{G}_2^{\mathbb{H},n}\|^2_{\Omega_2} \Big)\nonumber\\
	&&+\kappa \Delta t \int_{I}|[\bu_h^M]| (|u_{h,1}^{M+1}|^2+|u_{h,2}^{M+1}|^2)ds\nonumber\\
	&&+\kappa \Delta t \sum_{n=1}^M \int_{I}\Big{|}|[\bu_h^n]|^{1/2}u_{h,1}^{n+1}-|[\bu_h^{n-1}]|^{1/2}u_{h,2}^{n}\Big{|}^2ds\nonumber\\
	&&+\kappa \Delta t \sum_{n=1}^M \int_{I}\Big{|}|[\bu_h^n]|^{1/2}u_{h,2}^{n+1}-|[\bu_h^{n-1}]|^{1/2}u_{h,1}^{n}\Big{|}^2ds \nonumber\allowdisplaybreaks\\
	&=&\|{u^{1}_{h,1}}\|_{\Omega_1}^2+\|{u^{1}_{h,2}}\|_{\Omega_2}^2
	+ \nu_T\Delta t\big(\|\nabla  u_{h,1}^{1}\|^2_{\Omega_1}+\|\nabla  u_{h,2}^{1}\|^2_{\Omega_2}\big)
	+\kappa \Delta t \int_{I}|[u_h^{0}]| (|u_{h,1}^{1}|^2+|u_{h,2}^{1}|^2)ds\nonumber\allowdisplaybreaks\\&&+2\Delta t \sum_{n=1}^M(\bff_1^{n+1},u_{h,1}^{n+1})_{\Omega_1}+2\Delta t \sum_{n=1}^M(\bff_2^{n+1},u_{h,2}^{n+1})_{\Omega_2}.\label{enrgy}
	\end{eqnarray}
\end{lemma}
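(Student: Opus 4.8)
The plan is to derive \eqref{enrgy} by the classical discrete energy argument: take the discrete solution itself as the test function in each equation, exploit the skew symmetry of $c_i$, convert every surviving term into a \emph{telescoping part plus a nonnegative part}, and only then sum in $n$. Concretely, I would test \eqref{alg1} with $v_{h,1}=u_{h,1}^{n+1}$, test \eqref{alg2} with $v_{h,2}=u_{h,2}^{n+1}$, and test the projection relations \eqref{alg101} and \eqref{alg100} with $\mathbb{L}_i^H=\mathbb{G}_i^{\mathbb{H},n}$. Because the equivalent formulation already lives in the discretely divergence-free spaces $V_i^h$, the pressure terms drop out, and the nonlinear terms vanish since $c_i(u_{h,i}^{n+1};u_{h,i}^{n+1},u_{h,i}^{n+1})=0$ by the property recalled after \eqref{weak1}. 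For the time-derivative term I would use the polarization identity $2(a-b,a)=\|a\|^2-\|b\|^2+\|a-b\|^2$, so that after multiplying the whole equation by $2\Delta t$ it produces $\|u_{h,i}^{n+1}\|^2-\|u_{h,i}^{n}\|^2+\|u_{h,i}^{n+1}-u_{h,i}^{n}\|^2$, whose first two pieces telescope to the kinetic energies at levels $M+1$ and $1$ and whose last piece accumulates into the numerical-dissipation sum.

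The VMS viscous contribution is the first genuinely model-specific step. After testing, the combination $(\nu_i+\nu_T)\|\nabla u_{h,i}^{n+1}\|^2-\nu_T(\mathbb{G}_i^{\mathbb{H},n},\nabla u_{h,i}^{n+1})$ must be reorganized. Using \eqref{alg101} with $\mathbb{L}_i^H=\mathbb{G}_i^{\mathbb{H},n}$, i.e. the projection identity $\|\mathbb{G}_i^{\mathbb{H},n}\|^2=(\nabla u_{h,i}^{n},\mathbb{G}_i^{\mathbb{H},n})$, one checks the algebraic identity $2\nu_T\|\nabla u_{h,i}^{n+1}\|^2-2\nu_T(\mathbb{G}_i^{\mathbb{H},n},\nabla u_{h,i}^{n+1})=\nu_T\|\nabla u_{h,i}^{n+1}-\mathbb{G}_i^{\mathbb{H},n}\|^2+\nu_T\|\nabla u_{h,i}^{n}-\mathbb{G}_i^{\mathbb{H},n}\|^2+\nu_T(\|\nabla u_{h,i}^{n+1}\|^2-\|\nabla u_{h,i}^{n}\|^2)$. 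The two fluctuation norms are exactly the nonnegative VMS dissipation appearing in \eqref{enrgy}, while the remaining $\nu_T(\|\nabla u_{h,i}^{n+1}\|^2-\|\nabla u_{h,i}^{n}\|^2)$ telescopes to the boundary terms $\nu_T\Delta t\|\nabla u_{h,i}^{M+1}\|^2$ and $\nu_T\Delta t\|\nabla u_{h,i}^{1}\|^2$; the leftover $2\nu_i\|\nabla u_{h,i}^{n+1}\|^2$ is already in the required form.

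The heart of the argument, and the step I expect to be the main obstacle, is the treatment of the geometrically averaged interface terms, where the two subdomain equations must be added and the cross products completed into perfect squares. Writing $a_n:=|[\bu_h^n]|^{1/2}$, the combined friction contribution from both equations is $\int_I[a_n^2(|u_{h,1}^{n+1}|^2+|u_{h,2}^{n+1}|^2)-a_na_{n-1}(u_{h,1}^{n+1}u_{h,2}^{n}+u_{h,2}^{n+1}u_{h,1}^{n})]ds$. Setting $P_n:=\int_I a_n^2(|u_{h,1}^{n+1}|^2+|u_{h,2}^{n+1}|^2)ds$, a direct expansion shows this equals $\tfrac12(P_n-P_{n-1})+\tfrac12\int_I(|a_nu_{h,1}^{n+1}-a_{n-1}u_{h,2}^{n}|^2+|a_nu_{h,2}^{n+1}-a_{n-1}u_{h,1}^{n}|^2)ds$. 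It is precisely here that the structure of the geometric average is essential: the split $|[\bu_h^n]|^{1/2}|[\bu_h^{n-1}]|^{1/2}$ of the coupling coefficient makes the mixed products pair up into the two squares and makes the diagonal parts telescope through $P_n$.

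Finally I would multiply every equation by $2\Delta t$, combine the pieces above, and sum over $n=1,\dots,M$, the lower limit reflecting that the two-step interface coupling needs $u_h^0$ and $u_h^1$ as data. The kinetic energies and the $\nu_T$ gradient terms telescope to their level-$M{+}1$ and level-$1$ values; the friction telescoping $\kappa\Delta t(P_M-P_0)$ yields the single interface term $\kappa\Delta t\int_I|[\bu_h^M]|(|u_{h,1}^{M+1}|^2+|u_{h,2}^{M+1}|^2)ds$ on the left and the $|[\bu_h^{0}]|$ term on the right; the accumulated fluctuation norms, numerical-dissipation jumps, and interface squares collect into the nonnegative sums; and the right-hand sides produce $2\Delta t\sum_{n}(\bff_i^{n+1},u_{h,i}^{n+1})_{\Omega_i}$. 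Equating the two sides gives exactly \eqref{enrgy}. The only delicate bookkeeping is matching the telescoped boundary indices and confirming that no nonnegative term is dropped, so that the identity is an exact balance rather than an inequality.
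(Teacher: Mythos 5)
Your proposal is correct and follows essentially the same route as the paper's proof: testing with $u_{h,i}^{n+1}$, using skew symmetry and the polarization identity, splitting the geometrically averaged interface terms into a telescoping part $\tfrac{\kappa}{2}(P_n-P_{n-1})$ plus two perfect squares (the identity the paper cites from Connors--Howell--Layton, which you verify by direct expansion), and reorganizing the VMS term via the projection orthogonality $(\nabla u_{h,i}^n-\mathbb{G}_i^{\mathbb{H},n},\mathbb{G}_i^{\mathbb{H},n})=0$ into the two fluctuation norms plus a telescoping gradient difference. All key identities you state check out algebraically, so the argument assembles into the paper's energy balance exactly as claimed.
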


\begin{proof}
Letting $v_{h,1} =u_{h,1}^{n+1}$ in \eqref{alg1} and $v_{h,2} =u_{h,2}^{n+1}$ in \eqref{alg2} and using the skew-symmetry of nonlinear terms, we get
	\begin{eqnarray}
	\lefteqn{(\frac{{u^{n+1}_{h,1}}-u_{h,1}^{n}}{\Delta t} ,u_{h,1}^{n+1})_{\Omega_1} +(\nu_1+\nu_T)\|\nabla  u_{h,1}^{n+1}\|^2_{\Omega_1}} \nonumber
		\\
		&&+\kappa \int_{I}|[\bu_h^n]||u_{h,1}^{n+1}|^2ds-\kappa \int_{I}u_{h,2}^n|[\bu_h^n]|^{1/2}|[\bu_h^{n-1}]|^{1/2}u_{h,1}^{n+1}ds
		\nonumber\\
	&=&(\bff_1^{n+1},u_{h,1}^{n+1})_{\Omega_1}+\nu_T(\mathbb{G}_1^{\mathbb{H},n},\nabla u_{h,1}^{n+1})_{\Omega_1},\label{stb1}
	\end{eqnarray}
	and
	\begin{eqnarray}
	\lefteqn{(\frac{{u^{n+1}_{h,2}}-u_{h,2}^{n}}{\Delta t} ,u_{h,2}^{n+1})_{\Omega_2} +(\nu_{2}+\nu_T)\|\nabla  {u}_{h,2}^{n+1}\|^2_{\Omega_2}} \nonumber
	\\
&&+\kappa \int_{I}|[\bu_h^n]||u_{h,2}^{n+1}|^2ds-\kappa \int_{I}u_{h,1}^n|[\bu_h^n]|^{1/2}|[\bu_h^{n-1}]|^{1/2}u_{h,2}^{n+1}ds \nonumber\\
	&=&(\bff_2^{n+1},u_{h,2}^{n+1})_{\Omega_2}+\nu_T(\mathbb{G}_2^{\mathbb{H},n},\nabla u_{h,2}^{n+1})_{\Omega_2}.\label{stab2}
	\end{eqnarray}
	Utilizing
	\begin{eqnarray}
	(a-b)\cdot a=\dfrac{1}{2}(|a|^2+|a-b|^2-|b|^2),\label{eq}
	\end{eqnarray}
	we have
	\begin{eqnarray}
	\lefteqn{\dfrac{1}{2\Delta t}(\|{u^{n+1}_{h,1}}\|_{\Omega_1}^2+\|{{u^{n+1}_{h,1}}-u_{h,1}^{n}}\|_{\Omega_1}^2-\|{u^{n}_{h,1}}\|_{\Omega_1}^2)}\nonumber\\
	&&+(\nu_1+\nu_T)\|\nabla  u_{h,1}^{n+1}\|^2_{\Omega_1}-\nu_T(\mathbb{G}_1^{\mathbb{H},n},\nabla u_{h,1}^{n+1})_{\Omega_1}\nonumber\\
	&&+\kappa \int_{I}|[\bu_h^n]| \, \,|u_{h,1}^{n+1}|^2ds -\kappa \int_{I}u_{h,2}^n|[\bu_h^n]|^{1/2}[\bu_h^{n-1}]^{1/2}u_{h,1}^{n+1}ds\nonumber\\
	&=&(\bff_1^{n+1},u_{h,1}^{n+1})_{\Omega_1},\label{stb33}
	\end{eqnarray}
	and
	\begin{eqnarray}
	\lefteqn{\dfrac{1}{2\Delta t}(\|{u^{n+1}_{h,2}}\|_{\Omega_2}^2+\|{{u^{n+1}_{h,2}}-u_{h,2}^{n}}\|_{\Omega_2}^2-\|{u^{n}_{h,2}}\|_{\Omega_2}^2)}\nonumber\\
	&&+(\nu_2+\nu_T)\|\nabla  {u}_{h,2}^{n+1}\|^2_{\Omega_2} - \nu_T(\mathbb{G}_2^{\mathbb{H},n},\nabla u_{h,2}^{n+1})_{\Omega_2}\nonumber\\
	&&+\kappa \int_{I}|[\bu_h^n]||u_{h,2}^{n+1}|^2ds-\kappa \int_{I}u_{h,1}^n|[\bu_h^n]|^{1/2}|[\bu_h^{n-1}]|^{1/2}u_{h,2}^{n+1}ds\nonumber\\
	&=&(\bff_2^{n+1},u_{h,2}^{n+1})_{\Omega_2},\label{stb43}
	\end{eqnarray}
	Adding \eqref{stb33} to \eqref{stb43} and multiplying by $2\Delta t$ yields
	\begin{eqnarray}
	\lefteqn{\|{u^{n+1}_{h,1}}\|_{\Omega_1}^2+\|{{u^{n+1}_{h,1}}-u_{h,1}^{n}}\|_{\Omega_1}^2-\|{u^{n}_{h,1}}\|_{\Omega_1}^2 +\|{u^{n+1}_{h,2}}\|_{\Omega_2}^2+\|{{u^{n+1}_{h,2}}-u_{h,2}^{n}}\|_{\Omega_2}^2-\|{u^{n}_{h,2}}\|_{\Omega_2}^2} \nonumber\\
	&&+2\Delta t\big((\nu_1+\nu_T)\|\nabla  u_{h,1}^{n+1}\|^2_{\Omega_1}-\nu_T (\mathbb{G}_1^{\mathbb{H},n},\nabla u_{h,1}^{n+1})_{\Omega_1}+(\nu_2+\nu_T) \|\nabla  {u}_{h,2}^{n+1}\|_{\Omega_2}-\nu_T (\mathbb{G}_2^{\mathbb{H},n},\nabla u_{h,2}^{n+1})_{\Omega_2}\Big) \nonumber\\
	&&+2\kappa\Delta t \int_{I}|[\bu_h^n]||u_{h,1}^{n+1}|^2ds+2\kappa\Delta t \int_{I}|[\bu_h^n]||u_{h,2}^{n+1}|^2ds \nonumber\\&&-2\kappa\Delta t \int_{I}u_{h,2}^n|[\bu_h^n]|^{1/2}[\bu_h^{n-1}]^{1/2}u_{h,1}^{n+1}ds-2\kappa\Delta t \int_{I}u_{h,1}^n|[\bu_h^n]|^{1/2}|[\bu_h^{n-1}]|^{1/2}u_{h,2}^{n+1}ds\nonumber\\
	&=&2\Delta t(\bff_1^{n+1},u_{h,1}^{n+1})_{\Omega_1}+2\Delta t(\bff_2^{n+1},u_{h,2}^{n+1})_{\Omega_2}.\label{stb5}
	\end{eqnarray}
	The interface terms on the left hand side of \eqref{stb5} can be expressed as (see \cite{CHL12})
	\begin{eqnarray}
	\lefteqn{\kappa \int_{I}|[\bu_h^n]| \, \,|u_{h,1}^{n+1}|^2ds-\kappa \int_{I}u_{h,2}^n|[\bu_h^n]|^{1/2}[\bu_h^{n-1}]^{1/2}u_{h,1}^{n+1}ds}\nonumber\\
	&&+\kappa \int_{I}|[\bu_h^n]||u_{h,2}^{n+1}|^2ds-\kappa \int_{I}u_{h,1}^n|[\bu_h^n]|^{1/2}|[\bu_h^{n-1}]|^{1/2}u_{h,2}^{n+1}ds\nonumber\\
	&=&\dfrac{\kappa}{2} \int_{I}|[\bu_h^n]| (|u_{h,1}^{n+1}|^2+|u_{h,2}^{n+1}|^2)ds- \dfrac{\kappa}{2}\int_{I}|[\bu_h^{n-1}]| (|u_{h,1}^{n}|^2+|u_{h,2}^{n}|^2)ds\nonumber\\
	&&+\dfrac{\kappa}{2} \int_{I}\Big{|}|[\bu_h^n]|^{1/2}u_{h,1}^{n+1}-|[\bu_h^{n-1}]|^{1/2}u_{h,2}^{n}\Big{|}^2ds\nonumber\\
	&&+\dfrac{\kappa}{2} \int_{I}\Big{|}|[\bu_h^n]|^{1/2}u_{h,2}^{n+1}-|[\bu_h^{n-1}]|^{1/2}u_{h,1}^{n}\Big{|}^2ds. \label{stb6}
	\end{eqnarray}
	Substituting \eqref{stb6} into \eqref{stb5} gives
	\begin{eqnarray}
	\lefteqn{\|{u^{n+1}_{h,1}}\|_{\Omega_1}^2+\|{{u^{n+1}_{h,1}}-u_{h,1}^{n}}\|_{\Omega_1}^2-\|{u^{n}_{h,1}}\|_{\Omega_1}^2 +\|{u^{n+1}_{h,2}}\|_{\Omega_2}^2+\|{{u^{n+1}_{h,2}}-u_{h,2}^{n}}\|_{\Omega_2}^2-\|{u^{n}_{h,2}}\|_{\Omega_2}^2}\nonumber\\
	&&+2\Delta t\big((\nu_1+\nu_T)\|\nabla  u_{h,1}^{n+1}\|^2_{\Omega_1}-\nu_T (\mathbb{G}_1^{\mathbb{H},n},\nabla u_{h,1}^{n+1})_{\Omega_1}+(\nu_2+\nu_T) \|\nabla  {u}_{h,2}^{n+1}\|_{\Omega_2}-\nu_T (\mathbb{G}_2^{\mathbb{H},n},\nabla u_{h,2}^{n+1})_{\Omega_2}\Big) \nonumber\\
	&&+\kappa \Delta t \int_{I}|[\bu_h^n]| (|u_{h,1}^{n+1}|^2+|u_{h,2}^{n+1}|^2)ds
	- \kappa \Delta t\int_{I}|[\bu_h^{n-1}]| (|u_{h,1}^{n}|^2+|u_{h,2}^{n}|^2)ds\nonumber\\
	&&+\kappa \Delta t \int_{I}\Big{|}|[\bu_h^n]|^{1/2}u_{h,1}^{n+1}-|[\bu_h^{n-1}]|^{1/2}u_{h,2}^{n}\Big{|}^2ds
	+\kappa \Delta t \int_{I}\Big{|}|[\bu_h^n]|^{1/2}u_{h,2}^{n+1}-|[\bu_h^{n-1}]|^{1/2}u_{h,1}^{n}\Big{|}^2ds \nonumber\\
	&=&2\Delta t(\bff_1^{n+1},u_{h,1}^{n+1})_{\Omega_1}+2\Delta t(\bff_2^{n+1},u_{h,2}^{n+1})_{\Omega_2},\label{stb7}
	\end{eqnarray}
	    Also considering the fact that $(\nabla u_{h,i}^n - \mathbb{G}_i^{\mathbb{H},n}, \mathbb{G}_i^{\mathbb{H},n})_{\Omega_i}=0,$ one can easily show
\begin{equation}
\|\nabla u_{h,i}^{n} - \mathbb{G}_i^{\mathbb{H},n}\|^2_{\Omega_i}=
\|\nabla u_{h,i}^{n}\|^2_{\Omega_i}-\|\mathbb{G}_i^{\mathbb{H},n}\|^2_{\Omega_i}. \nonumber
\end{equation}
The last equality and some algebraic manipulations give
\begin{eqnarray}
(\nu_i+\nu_T)\|\nabla  u_{h,i}^{n+1}\|^2_{\Omega_i} - \nu_T(\mathbb{G}_i^{\mathbb{H},n},\nabla u_{h,i}^{n+1})_{\Omega_i} \nonumber\\
=
\nu_i\|\nabla  u_{h,i}^{n+1}\|^2_{\Omega_i} + \frac{\nu_T}{2}\big(\|\nabla  u_{h,i}^{n+1} - \mathbb{G}_i^{\mathbb{H},n}\|^2_{\Omega_i} + 2(\mathbb{G}_i^{\mathbb{H},n},\nabla u_{h,i}^{n+1})_{\Omega_i} - \|\mathbb{G}_i^{\mathbb{H},n}\|^2_{\Omega_i} \big) \nonumber \\
-\nu_T (\mathbb{G}_i^{\mathbb{H},n},\nabla u_{h,i}^{n+1})_{\Omega_i} +\frac{\nu_T}{2}\big(\|\nabla  u_{h,i}^{n+1}\|^2_{\Omega_i} - \|\nabla  u_{h,i}^{n}\|^2_{\Omega_i}\big) +\frac{\nu_T}{2} \|\nabla  u_{h,i}^{n}\|^2_{\Omega_i} \nonumber \\
=
\nu_i\|\nabla  u_{h,i}^{n+1}\|^2_{\Omega_i} + \frac{\nu_T}{2}\|\nabla  u_{h,i}^{n+1} - \mathbb{G}_i^{\mathbb{H},n}\|^2_{\Omega_i}
+\frac{\nu_T}{2} \|\nabla  u_{h,i}^{n} - \mathbb{G}_i^{\mathbb{H},n}\|^2_{\Omega_i} \nonumber \\
+\frac{\nu_T}{2}\big(\|\nabla  u_{h,i}^{n+1}\|^2_{\Omega_i}
-\|\nabla  u_{h,i}^{n}\|^2_{\Omega_i}\big).
\nonumber
\end{eqnarray}
	Substituting the last equation in (\ref{stb7}),
\begin{eqnarray}
	\lefteqn{\|{u^{n+1}_{h,1}}\|_{\Omega_1}^2+\|{{u^{n+1}_{h,1}}-u_{h,1}^{n}}\|_{\Omega_1}^2-\|{u^{n}_{h,1}}\|_{\Omega_1}^2 +\|{u^{n+1}_{h,2}}\|_{\Omega_2}^2+\|{{u^{n+1}_{h,2}}-u_{h,2}^{n}}\|_{\Omega_2}^2-\|{u^{n}_{h,2}}\|_{\Omega_2}^2}\nonumber\\
&&+\Delta t\Big(
2\nu_1\|\nabla  u_{h,1}^{n+1}\|^2_{\Omega_1} + \nu_T\|\nabla  u_{h,1}^{n+1} - \mathbb{G}_1^{\mathbb{H},n}\|^2_{\Omega_1}  \nonumber \\
&&+\nu_T \|\nabla  u_{h,1}^{n} - \mathbb{G}_1^{\mathbb{H},n}\|^2_{\Omega_1}+\nu_T\big(\|\nabla  u_{h,1}^{n+1}\|^2_{\Omega_1}
-\|\nabla  u_{h,1}^{n}\|^2_{\Omega_1}\big)
\Big) \nonumber\\
&&+\Delta t\Big(
2\nu_2\|\nabla  u_{h,2}^{n+1}\|^2_{\Omega_2} + \nu_T\|\nabla  u_{h,2}^{n+1} - \mathbb{G}_2^{\mathbb{H},n}\|^2_{\Omega_2}  \nonumber \\
&&+\nu_T \|\nabla  u_{h,2}^{n} - \mathbb{G}_2^{\mathbb{H},n}\|^2_{\Omega_2}+\nu_T\big(\|\nabla  u_{h,2}^{n+1}\|^2_{\Omega_2}
-\|\nabla  u_{h,2}^{n}\|^2_{\Omega_2}\big)
\Big) \nonumber\\
	&&+\kappa \Delta t \int_{I}|[\bu_h^n]| (|u_{h,1}^{n+1}|^2+|u_{h,2}^{n+1}|^2)ds
	- \kappa \Delta t\int_{I}|[\bu_h^{n-1}]| (|u_{h,1}^{n}|^2+|u_{h,2}^{n}|^2)ds\nonumber\\
	&&+\kappa \Delta t \int_{I}\Big{|}|[\bu_h^n]|^{1/2}u_{h,1}^{n+1}-|[\bu_h^{n-1}]|^{1/2}u_{h,2}^{n}\Big{|}^2ds
	+\kappa \Delta t \int_{I}\Big{|}|[\bu_h^n]|^{1/2}u_{h,2}^{n+1}-|[\bu_h^{n-1}]|^{1/2}u_{h,1}^{n}\Big{|}^2ds \nonumber\\
	&=&2\Delta t(\bff_1^{n+1},u_{h,1}^{n+1})_{\Omega_1}+2\Delta t(\bff_2^{n+1},u_{h,2}^{n+1})_{\Omega_2},\label{stb8}
	\end{eqnarray}
	Summing over the time levels completes the proof.
\end{proof}

We now provide  the stability of  \eqref{alg1}-\eqref{alg100}.
\begin{lemma} \label{l:stab}
	Let $f_i\in L^2(0,T; H^{-1}(\Omega_i))$ for $i=1,2$. {The} scheme (\ref{alg1})-\eqref{alg100} is unconditionally stable and provides the following bound at time step $t=M+1$
	\begin{eqnarray}
	\lefteqn{\|{u^{M+1}_{h,1}}\|_{\Omega_1}^2 +\|{u^{M+1}_{h,2}}\|_{\Omega_2}^2+\nu_T\Delta t( \|\nabla  u_{h,1}^{M+1}\|^2_{\Omega_1}+\|\nabla  {u}_{h,2}^{M+1}\|^2_{\Omega_2})}\nonumber\\
	&&+\nu_1\Delta t \sum_{n=1}^M\|\nabla  u_{h,1}^{n+1}\|^2_{\Omega_1}+\nu_2\Delta t\sum_{n=1}^M\|\nabla  {u}_{h,2}^{n+1}\|^2_{\Omega_2}\nonumber\\&&+\kappa \Delta t \int_{I}|[\bu_h^M]| (|u_{h,1}^{M+1}|^2+|u_{h,2}^{M+1}|^2)ds\nonumber\\
	&&+\kappa \Delta t \sum_{n=1}^M \int_{I}\Big{|}|[\bu_h^n]|^{1/2}u_{h,1}^{n+1}-|[\bu_h^{n-1}]|^{1/2}u_{h,2}^{n}\Big{|}^2ds\nonumber\\
	&&+\kappa \Delta t \sum_{n=1}^M \int_{I}\Big{|}|[\bu_h^n]|^{1/2}u_{h,2}^{n+1}-|[\bu_h^{n-1}]|^{1/2}u_{h,1}^{n}\Big{|}^2ds \nonumber\\
	&\leq&\|{u^{1}_{h,1}}\|_{\Omega_1}^2+\|{u^{1}_{h,2}}\|_{\Omega_2}^2+\kappa \Delta t \int_{I}|[u^{0}]| (|u_{h,1}^{1}|^2+|u_{h,2}^{1}|^2)ds+\nu_T\Delta t( \|\nabla u_{h,1}^{1}\|_{\Omega_1}^2 +\|\nabla u_{h,2}^{1}\|_{\Omega_2}^2 )\nonumber\\&&+{\Delta t }\sum_{n=1}^M(\nu_1^{-1}\|f_1^{n+1}\|_{-1,\Omega_1}^2+\nu_2^{-1}\|f_2^{n+1}\|_{-1,\Omega_2}^2)\label{stb}
	\end{eqnarray}
\end{lemma}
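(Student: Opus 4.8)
The plan is to obtain the stability bound \eqref{stb} directly from the global energy conservation identity \eqref{enrgy} proved in the preceding lemma, by estimating the only sign-indefinite terms — the body-force contributions on the right-hand side — and then discarding the non-negative quantities on the left that do not appear in the target estimate. Because \eqref{enrgy} is an exact identity obtained with the choices $v_{h,i}=u_{h,i}^{n+1}$, no approximation has yet been made, and the whole argument reduces to one application of duality together with Young's inequality. In particular no time-step restriction will ever be invoked, which is exactly what yields \emph{unconditional} stability and, as I note below, makes any appeal to the discrete Gronwall lemma unnecessary.

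First I would treat the forcing terms $2\Delta t\sum_{n=1}^M (\bff_i^{n+1},u_{h,i}^{n+1})_{\Omega_i}$. Using the definition of the dual norm $\|\cdot\|_{-1,\Omega_i}$ (with respect to the $H^1$-seminorm, which is a norm on $X_i$ by the Poincar\'e--Friedrichs inequality), each summand satisfies
\[
2(\bff_i^{n+1},u_{h,i}^{n+1})_{\Omega_i}\le 2\|\bff_i^{n+1}\|_{-1,\Omega_i}\,\|\nabla u_{h,i}^{n+1}\|_{\Omega_i}\le \nu_i^{-1}\|\bff_i^{n+1}\|_{-1,\Omega_i}^2+\nu_i\|\nabla u_{h,i}^{n+1}\|_{\Omega_i}^2,
\]
where the final step is Young's inequality with the weight chosen to match the viscous coefficient. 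Substituting this into \eqref{enrgy}, the term $\nu_i\|\nabla u_{h,i}^{n+1}\|_{\Omega_i}^2$ is absorbed by half of the dissipation term $2\nu_i\|\nabla u_{h,i}^{n+1}\|_{\Omega_i}^2$ already present on the left, leaving precisely the factor $\nu_i$ in front of $\Delta t\sum_{n=1}^M\|\nabla u_{h,i}^{n+1}\|_{\Omega_i}^2$ recorded in \eqref{stb}, while $\nu_i^{-1}\|\bff_i^{n+1}\|_{-1,\Omega_i}^2$ passes to the right-hand side.

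Next I would observe that every term remaining on the left of \eqref{enrgy} but \emph{not} retained in \eqref{stb} is non-negative: the velocity increments $\|u_{h,i}^{n+1}-u_{h,i}^{n}\|_{\Omega_i}^2$, the two VMS fluctuation terms $\nu_T\|\nabla u_{h,i}^{n+1}-\mathbb{G}_i^{\mathbb{H},n}\|_{\Omega_i}^2$ and $\nu_T\|\nabla u_{h,i}^{n}-\mathbb{G}_i^{\mathbb{H},n}\|_{\Omega_i}^2$, and the squared interface expressions. Dropping them only decreases the left side, so \eqref{stb} follows. The interface integral at the terminal level and the two telescoped interface squares appear with the same sign in both statements and hence carry over verbatim.

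There is no genuine obstacle here, since the conservation lemma has already performed the delicate step of recasting the coupled nonlinear interface terms into manifestly non-negative squares; what remains is bookkeeping. The single point requiring care is the Young weight in the forcing estimate: it must leave a strictly positive multiple of $\|\nabla u_{h,i}^{n+1}\|_{\Omega_i}^2$ (namely $\nu_i$) on the left, so that the viscous dissipation controls the forcing for \emph{all} $\Delta t$, which is the precise mechanism delivering the claimed unconditional stability.
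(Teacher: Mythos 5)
Your proof is correct and follows essentially the same route as the paper: both start from the energy balance \eqref{enrgy} of the preceding lemma, bound the forcing terms by duality and Young's inequality with weight $\nu_i$ so that half of the dissipation $2\nu_i\Delta t\sum_{n}\|\nabla u_{h,i}^{n+1}\|_{\Omega_i}^2$ absorbs the gradient contribution, and then discard the remaining non-negative terms. The only (cosmetic) difference is that you argue directly from the final identity \eqref{enrgy}, in which the eddy-viscosity cross terms have already been rewritten as non-negative fluctuation terms $\nu_T\|\nabla u_{h,i}^{n+1}-\mathbb{G}_i^{\mathbb{H},n}\|_{\Omega_i}^2$ and $\nu_T\|\nabla u_{h,i}^{n}-\mathbb{G}_i^{\mathbb{H},n}\|_{\Omega_i}^2$ that can simply be dropped, whereas the paper's write-up re-estimates the terms $2\nu_T\Delta t\sum_n(\mathbb{G}_i^{\mathbb{H},n},\nabla u_{h,i}^{n+1})_{\Omega_i}$ via the projection bound \eqref{g4} and Young's inequality --- a step that pertains to the intermediate form \eqref{stb7} rather than to \eqref{enrgy} as stated, so your bookkeeping is, if anything, the more consistent of the two.
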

\begin{proof}
	Performing Cauchy-Schwarz and Young's inequalities for the right side of energy conservation equation (\ref{enrgy}), we have
	\begin{eqnarray}
	2\Delta t\sum_{n=1}^M(f_1^{n+1},u_{h,1}^{n+1})_{\Omega_1}&\leq& {\nu_1^{-1} \Delta t}\sum_{n=1}^M\|f_1^{n+1}\|_{-1,\Omega_1}^2+{\nu_1\Delta t}\sum_{n=1}^M\|\nabla u_{h,1}^{n+1}\|_{\Omega_1}^2, \label{f1}\\2\Delta t \sum_{n=1}^M(\bff_2^{n+1},u_{h,2}^{n+1})_{\Omega_2}&\leq&{\nu_2^{-1} \Delta t}\sum_{n=1}^M\|f_2^{n+1}\|_{-1,\Omega_2}^2+{\nu_2\Delta t}\sum_{n=1}^M\|\nabla u_{h,2}^{n+1}\|_{\Omega_2}^2.\label{f2}
	\end{eqnarray}
	Letting $\mathbb{L}_i^H=\mathbb{G}_i^{\mathbb{H},n}$ in \eqref{alg101} and \eqref{alg100} and utilizing Cauchy-Schwarz inequality gives
	\begin{eqnarray}
	\|\mathbb{G}_i^{\mathbb{H},n}\|_{\Omega_i}&\leq &\|\nabla u_{h,i}^{n}\|_{\Omega_i}. \label{g4}
	\end{eqnarray}
	Thus, using \eqref{g4}, the last two terms on the right hand side of \eqref{enrgy} can be bounded as
	\begin{eqnarray}
	2\nu_T\Delta t\sum_{n=1}^M(\mathbb{G}_1^{\mathbb{H},n},\nabla u_{h,1}^{n+1})_{\Omega_1}
	&\leq& {\nu_T\Delta t}\sum_{n=1}^M\|\nabla u_{h,1}^n\|_{\Omega_1}^2+{\nu_T\Delta t}\sum_{n=1}^M\|\nabla u_{h,1}^{n+1}\|_{\Omega_1}^2, \label{g1}\\
	[7pt]2\nu_T\Delta t\sum_{n=1}^M(\mathbb{G}_2^{\mathbb{H},n},\nabla u_{h,2}^{n+1})_{\Omega_2}
	&\leq& {\nu_T \Delta t}\sum_{n=1}^M\|\nabla u_{h,2}^n\|_{\Omega_2}^2+{\nu_T\Delta t}\sum_{n=1}^M\|\nabla u_{h,2}^{n+1}\|_{\Omega_2}^2. \label{g2}
	\end{eqnarray}
	Substituting (\ref{f1})-\eqref{f2} and \eqref{g1}-(\ref{g2}) in (\ref{enrgy}) produces the required result.
\end{proof}

We next prove that (\ref{alg1})-\eqref{alg100} is unconditionally long-time stable. To perform the long-time stability, in view of Lemma \ref{l:stab}, the right-hand side of \eqref{stb} is denoted by $S_M$, 
\begin{gather}
S_M:=\|{u^{1}_{h,1}}\|_{\Omega_1}^2+\|{u^{1}_{h,2}}\|_{\Omega_2}^2+\kappa \Delta t \int_{I}|[u^{0}]| (|u_{h,1}^{1}|^2+|u_{h,2}^{1}|^2)ds+\nu_T\Delta t( \|\nabla u_{h,1}^{1}\|_{\Omega_1}^2 +\|\nabla u_{h,2}^{1}\|_{\Omega_2}^2 ) \nonumber
\\
+{\Delta t }\sum_{n=1}^M(\nu_1^{-1}\|f_1^{n+1}\|_{-1,\Omega_1}^2+\nu_2^{-1}\|f_2^{n+1}\|_{-1,\Omega_2}^2). 
\label{stab}
\end{gather}

\begin{lemma} \label{lll1}Let $f_i \in L^{\infty}(\mathbb{R}^+,V_i^{h*})$ for $i=1,2$ be given, then solutions of the scheme (\ref{alg1})-\eqref{alg100} are long-time stable in the following sense: for any time step $\Delta t >0$ and for any $n> 0$
		\begin{eqnarray}
	\lefteqn{\|{u^{n+1}_{h,1}}\|_{\Omega_1}^2 +\nu_T\Delta t\|\nabla  u_{h,1}^{N+1}\|^2_{\Omega_1}+{\kappa} \Delta t \int_{I}|[\bu_h^n]| (|u_{h,1}^{n+1}|^2)ds}\nonumber\\&&+\|{u^{n+1}_{h,2}}\|_{\Omega_2}^2 +\nu_T\Delta t\|\nabla  u_{h,2}^{n+1}\|^2_{\Omega_2} +{\kappa} \Delta t \int_{I}|[\bu_h^n]| (|u_{h,2}^{n+1}|^2)ds\nonumber\\
	&\leq&(1+\alpha)^{-n}\Big(\|{u^{1}_{h,1}}\|_{\Omega_1}^2+\nu_T\Delta t \|\nabla u_{h,1}^{1}\|_{\Omega_1}^2+\kappa \Delta t\int_{I}|[\bu_h^{0}]| (|u_{h,1}^{1}|^2)ds\nonumber\\
	&&+\|{u^{1}_{h,2}}\|_{\Omega_2}^2+\nu_T\Delta t \|\nabla u_{h,2}^{1}\|_{\Omega_2}^2+\kappa \Delta t\int_{I}|[\bu_h^{0}]| (|u_{h,2}^{1}|^2)ds\Big)\nonumber
	\\&& +\alpha^{-1}\Delta t(\nu_1^{-1}\|f_1\|_{L^{\infty}(\mathbb{R}^+,V_1^{h*})}^2+\nu_2^{-1}\|f_2\|_{L^{\infty}(\mathbb{R}^+,V_2^{h*})}^2),\label{lstb}
	\end{eqnarray}
	where $\alpha:=\min\Big\{\dfrac{\nu_i\Delta t}{3C_p^2},\dfrac{\nu_i}{3\nu_T},\dfrac{\nu_i}{3}(\dfrac{C\kappa^2S_n}{4}+\dfrac{2C_p^2}{\nu_i^5}+\dfrac{\nu_i}{4})^{-1}\Big\}$, for $i=1,2$.
\end{lemma}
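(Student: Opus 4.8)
The plan is to reduce the long-time estimate to a one-step geometric recursion for the discrete energy
\[
E^{n}:=\|u^{n}_{h,1}\|_{\Omega_1}^2+\|u^{n}_{h,2}\|_{\Omega_2}^2+\nu_T\Delta t\big(\|\nabla u_{h,1}^{n}\|_{\Omega_1}^2+\|\nabla u_{h,2}^{n}\|_{\Omega_2}^2\big)+\kappa\Delta t\int_I|[\bu_h^{n-1}]|\big(|u_{h,1}^{n}|^2+|u_{h,2}^{n}|^2\big)ds,
\]
which is exactly the quantity appearing on both sides of \eqref{lstb}. First I would rerun the single-step computation of the energy-conservation lemma but stop before summing in $n$: test \eqref{alg1}-\eqref{alg2} with $u_{h,i}^{n+1}$, use $(a-b,a)=\tfrac12(\|a\|^2+\|a-b\|^2-\|b\|^2)$, the projection identity $\|\nabla u_{h,i}^{n}-\mathbb{G}_i^{\mathbb{H},n}\|^2=\|\nabla u_{h,i}^{n}\|^2-\|\mathbb{G}_i^{\mathbb{H},n}\|^2$, and the interface regrouping \eqref{stb6}. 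Discarding the manifestly nonnegative terms (the velocity increments, the two VMS fluctuations $\|\nabla u_{h,i}^{n+1}-\mathbb{G}_i^{\mathbb{H},n}\|^2$, $\|\nabla u_{h,i}^{n}-\mathbb{G}_i^{\mathbb{H},n}\|^2$, and the two interface squares) leaves
\[
E^{n+1}-E^{n}+2\Delta t\big(\nu_1\|\nabla u_{h,1}^{n+1}\|_{\Omega_1}^2+\nu_2\|\nabla u_{h,2}^{n+1}\|_{\Omega_2}^2\big)\le 2\Delta t\big[(f_1^{n+1},u_{h,1}^{n+1})_{\Omega_1}+(f_2^{n+1},u_{h,2}^{n+1})_{\Omega_2}\big],
\]
where the VMS and interface telescoping contributions are precisely what promote $\|\nabla u^{n}\|^2$ and $\int_I|[\bu_h^{n-1}]||u^{n}|^2$ from step $n$ into the step-$(n+1)$ pieces of $E$.

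Next I would bound the right-hand side. Since $u_{h,i}^{n+1}\in V_i^h$, I use $(f_i^{n+1},u_{h,i}^{n+1})_{\Omega_i}\le\|f_i^{n+1}\|_{V_i^{h*}}\|\nabla u_{h,i}^{n+1}\|_{\Omega_i}$ followed by Young's inequality to split off $\nu_i\Delta t\|\nabla u_{h,i}^{n+1}\|^2$, which cancels one of the two viscous dissipation terms; together with $\|f_i^{n+1}\|_{V_i^{h*}}\le\|f_i\|_{L^{\infty}(\mathbb{R}^+,V_i^{h*})}$ this produces the forcing constant $G:=\Delta t(\nu_1^{-1}\|f_1\|_{L^{\infty}(\mathbb{R}^+,V_1^{h*})}^2+\nu_2^{-1}\|f_2\|_{L^{\infty}(\mathbb{R}^+,V_2^{h*})}^2)$. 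The surviving dissipation is $\nu_1\Delta t\|\nabla u_{h,1}^{n+1}\|^2+\nu_2\Delta t\|\nabla u_{h,2}^{n+1}\|^2$, and the whole game is to absorb $\alpha E^{n+1}$ into it. I split this dissipation into three equal thirds (hence the $3$'s in $\alpha$) and match each against one group of terms in $E^{n+1}$: the kinetic part via Poincar\'e--Friedrichs ($\|u_{h,i}^{n+1}\|^2\le C_p^2\|\nabla u_{h,i}^{n+1}\|^2$, forcing $\alpha\le\nu_i\Delta t/(3C_p^2)$) and the VMS part trivially ($\alpha\nu_T\le\nu_i/3$, i.e. $\alpha\le\nu_i/(3\nu_T)$); these are the first two entries of the $\min$.

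For the interface term, the delicate one, I would apply the first estimate of Lemma \ref{lem:nnl} with $\alpha_i=\theta_j=u_{h,i}^{n+1}$, $[\boldsymbol\beta]=[\bu_h^{n}]$ and $\epsilon_j=2$, bounding $\kappa\int_I|[\bu_h^n]||u_{h,i}^{n+1}|^2$ by $\tfrac{C\kappa^2}{4}\|u_{h,i}^{n+1}\|_I^2\|[\bu_h^n]\|_I^2+\tfrac{2}{\nu_i^5}\|u_{h,i}^{n+1}\|_{\Omega_i}^2+\tfrac{\nu_i}{4}\|\nabla u_{h,i}^{n+1}\|_{\Omega_i}^2$. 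The last two terms become $\tfrac{2C_p^2}{\nu_i^5}\|\nabla u_{h,i}^{n+1}\|^2$ (Poincar\'e) and $\tfrac{\nu_i}{4}\|\nabla u_{h,i}^{n+1}\|^2$, exactly reproducing those constants in $\alpha$, while the trace inequality (third bound of Lemma \ref{Standard_ineqs}) with Poincar\'e gives $\|u_{h,i}^{n+1}\|_I^2\le C\|\nabla u_{h,i}^{n+1}\|^2$. The main obstacle is controlling $\|[\bu_h^n]\|_I^2$: here I invoke the already-established unconditional stability (Lemma \ref{l:stab}), whose a priori bound on the step-$n$ iterate controls the interface jump by $S_n$, which yields the constant $\tfrac{C\kappa^2 S_n}{4}$ and hence the third entry of $\min$ in the definition of $\alpha$.

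Combining the three absorptions gives the clean recursion $(1+\alpha)E^{n+1}\le E^{n}+G$. Because the right-hand side of \eqref{stb} is nondecreasing in the step count we have $S_k\le S_n$ for $k\le n$, so $\alpha(S_k)\ge\alpha(S_n)=\alpha$, and since $\|f_i^{k+1}\|_{V_i^{h*}}\le\|f_i\|_{L^{\infty}(\mathbb{R}^+,V_i^{h*})}$ the same $\alpha$ and $G$ work at every intermediate step; I may therefore iterate with a single constant to obtain $E^{n+1}\le(1+\alpha)^{-n}E^{1}+G\sum_{j=1}^{n}(1+\alpha)^{-j}\le(1+\alpha)^{-n}E^{1}+\alpha^{-1}G$, which is precisely \eqref{lstb}. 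No discrete Gronwall is needed; the estimate follows from summing the geometric series $\sum_{j\ge1}(1+\alpha)^{-j}=\alpha^{-1}$.
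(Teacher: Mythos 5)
Your proposal is correct and follows essentially the same route as the paper's proof: the same one-step energy estimate with nonnegative terms dropped, the same three-way absorption (Poincar\'e for the kinetic part, the trivial bound for the VMS part, and Lemma \ref{lem:nnl} with $\epsilon=2$ plus the a priori bound $S_n$ from Lemma \ref{l:stab} for the interface part) yielding exactly the paper's $\alpha$, and the same geometric recursion $(1+\alpha)E^{n+1}\le E^n+G$ closed by induction and the series $\sum_{j\ge 1}(1+\alpha)^{-j}=\alpha^{-1}$. The only differences are cosmetic: you cancel the forcing against half of a $2\nu_i$ dissipation rather than applying Young's inequality directly, and you make explicit the monotonicity $S_k\le S_n$ that justifies a single uniform $\alpha$, which the paper leaves implicit in ``utilizing induction.''
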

\begin{proof} Adding \eqref{stb33} to \eqref{stb43}, applying Cauchy-Schwarz and Young's inequalities, using \eqref{stb6}, \eqref{g4}, and dropping the non-negative terms, we have
	\begin{eqnarray}
	\lefteqn{\|{u^{n+1}_{h,1}}\|_{\Omega_1}^2 +\nu_T\Delta t\|\nabla  u_{h,1}^{n+1}\|^2_{\Omega_1}+{\kappa} \Delta t \int_{I}|[\bu_h^n]| (|u_{h,1}^{n+1}|^2)ds}\nonumber\\&&+\|{u^{n+1}_{h,2}}\|_{\Omega_2}^2 +\nu_T\Delta t\|\nabla  u_{h,2}^{n+1}\|^2_{\Omega_2} +{\kappa} \Delta t \int_{I}|[\bu_h^n]| (|u_{h,2}^{n+1}|^2)ds\nonumber\\
	&&+{\nu_1}\Delta t\|\nabla  u_{h,1}^{n+1}\|^2_{\Omega_1}+{\nu_2}\Delta t\|\nabla  u_{h,2}^{n+1}\|^2_{\Omega_2}\nonumber\\
	&\leq&\|{u^{n}_{h,1}}\|_{\Omega_1}^2+\nu_T\Delta t \|\nabla u_{h,1}^{n}\|_{\Omega_1}^2+\kappa \Delta t\int_{I}|[\bu_h^{n-1}]| (|u_{h,1}^{n}|^2)ds \nonumber\\
	&&+\|{u^{n}_{h,2}}\|_{\Omega_2}^2+\nu_T\Delta t \|\nabla u_{h,2}^{n}\|_{\Omega_2}^2+\kappa \Delta t\int_{I}|[\bu_h^{n-1}]| (|u_{h,2}^{n}|^2)ds\nonumber\\
	&&+\Delta t\nu_1^{-1}\|f_1^{n+1}\|_{V_1^{h*}}^2+\Delta t\nu_2^{-1}\|f_2^{n+1}\|_{V_1^{h*}}^2.\label{lstb1}
	\end{eqnarray}
Using the Lemma \ref{lem:nnl} with $\varepsilon=2$, Poincar\'e inequality and Lemma \ref{l:stab} produce
	\begin{eqnarray}
\kappa	\int_{I}|[\bu_h^n]| |u_{h,i}^{n+1}|^2\nonumber&\leq&\dfrac{C\kappa^2}{4} |[\bu_h^n]|_I^2 |u_{h,i}^{n+1}|_I^2+(\dfrac{2C_p^2}{\nu_i^5}+\dfrac{\nu_i}{4})\|\nabla u_{h,i}^{n+1}\|^2\nonumber\\
	&\leq&\dfrac{C\kappa^2}{4} |[\bu_h^n]|_I^2 \|u_{h,i}^{n+1}\|_{\Omega_i}^{1/3}\|\nabla u_{h,i}^{n+1}\|_{\Omega_i}^{5/3}+(\dfrac{2C_p^2}{\nu_i^5}+\dfrac{\nu_i}{4})\|\nabla u_{h,1}^{n+1}\|_{\Omega_i}^2\nonumber\\
		&\leq&(\dfrac{C\kappa^2S_n}{4}+\dfrac{2C_p^2}{\nu_i^5}+\dfrac{\nu_i}{4})\|\nabla u_{h,i}^{n+1}\|_{\Omega_i}^2, \quad \text{for} \, \, \, i=1,2, \label{lstb2}
	\end{eqnarray}
where $S_n$ has been defined in \eqref{stab}. Thus, the last two terms on the left hand side of \eqref{lstb1} can be written as
	\begin{eqnarray}
\lefteqn{{\nu_1}\Delta t\|\nabla  u_{h,1}^{n+1}\|^2_{\Omega_1}+{\nu_2}\Delta t\|\nabla  u_{h,2}^{n+1}\|^2_{\Omega_2}}\nonumber\\
		 &\geq&\alpha\Big(\|{u^{n+1}_{h,1}}\|_{\Omega_1}^2 +\nu_T\Delta t\|\nabla  u_{h,1}^{n+1}\|^2_{\Omega_1} +{\kappa} \Delta t \int_{I}|[\bu_h^n]| (|u_{h,1}^{n+1}|^2)ds\nonumber\\&&+\|{u^{n+1}_{h,2}}\|_{\Omega_2}^2 +\nu_T\Delta t\|\nabla  u_{h,2}^{n+1}\|^2_{\Omega_2} +{\kappa} \Delta t \int_{I}|[\bu_h^n]| (|u_{h,2}^{n+1}|^2)ds\Big),\label{lstb3}
	\end{eqnarray}
	where $\alpha:=\min\Big\{\dfrac{\nu_i\Delta t}{3C_p^2},\dfrac{\nu_i}{3\nu_T},\dfrac{\nu_i}{3}(\dfrac{C\kappa^2S_n}{4}+\dfrac{2C_p^2}{\nu_i^5}+\dfrac{\nu_i}{4})^{-1}\Big\}$, for $i=1,2$. Inserting \eqref{lstb3} in \eqref{lstb1} and multiplying by $(1+\alpha)^{-1}$, we obtain
	\begin{eqnarray}
\lefteqn{\|{u^{n+1}_{h,1}}\|_{\Omega_1}^2 +\nu_T\Delta t\|\nabla  u_{h,1}^{n+1}\|^2_{\Omega_1}+{\kappa} \Delta t \int_{I}|[\bu_h^n]| (|u_{h,1}^{n+1}|^2)ds}\nonumber\\&&+\|{u^{n+1}_{h,2}}\|_{\Omega_2}^2 +\nu_T\Delta t\|\nabla  u_{h,2}^{n+1}\|^2_{\Omega_2} +{\kappa} \Delta t \int_{I}|[\bu_h^n]| (|u_{h,2}^{n+1}|^2)ds\nonumber\\
&\leq&(1+\alpha)^{-1}\Big(\|{u^{n}_{h,1}}\|_{\Omega_1}^2+\nu_T\Delta t \|\nabla u_{h,1}^{n}\|_{\Omega_1}^2+\kappa \Delta t\int_{I}|[\bu_h^{n-1}]| (|u_{h,1}^{n}|^2)ds\nonumber\\
&&+\|{u^{n}_{h,2}}\|_{\Omega_2}^2+\nu_T\Delta t \|\nabla u_{h,2}^{n}\|_{\Omega_2}^2+\kappa \Delta t\int_{I}|[\bu_h^{n-1}]| (|u_{h,2}^{n}|^2)ds\Big)\nonumber
\\&& +\Delta t\nu_1^{-1}(1+\alpha)^{-1}\|f_1^{n+1}\|_{V_1^{h*}}^2+\Delta t\nu_2^{-1}(1+\alpha)^{-1}\|f_2^{n+1}\|_{V_1^{h*}}^2.\label{lstb4}
\end{eqnarray}
Utilizing induction produces the stated result \eqref{lstb}.
	\begin{remark}
Lemma \ref{lll1} proves that the long-time velocity solutions are bounded by the problem data and it is independent of the initial conditions when n is sufficiently large.
\end{remark}	
\end{proof}
\section{Convergence Analysis }
This section presents convergence analysis of (\ref{alg1})-(\ref{alg2}). It is assumed that all functions are sufficiently regular, i.e. the solution of \eqref{eq:atmo}-\eqref{eq:atmoBC} satisfies
\begin{eqnarray}
u \in{L^\infty(0,T;H^{k+1}(\Omega) \cap H^3(\Omega) )}, \quad
\partial_t u \in{L^\infty(0,T;H^{k+1}(\Omega)^d)}\label{reg},\quad \partial_{tt} u\in{L^\infty(0,T;H^{1}(\Omega)^d)} \label{r}.
\end{eqnarray}
We need to define the following discrete norms to use in the convergence analysis.
\begin{eqnarray}
|||u|||_{\infty,p}=\max\limits_{0\leq j \leq N}||u(t^j)||_p, \, \, |||u|||_{s,p}=\Big(\Delta t \sum\limits_{j=1}^{M}||u(t^j)||_p^s \Big)^{\frac{1}{s}}.
\end{eqnarray}


{Following the notation of \cite{CHL12}, let $D^{n+1} = \tilde{\nu}^5(1+\kappa^6E^{n+1}+|||\nabla u|||_{\infty,\Omega}^4)$, where $\tilde{\nu} = \max\{\nu^{-1}_1,\nu^{-1}_2\}$ and $E^{n+1} = \max_{j=0,1,...,n+1}\{\max\{\|u(t^j)\|^6_I,\|u_h^j\|^6_I\}\}$.}

\begin{theorem}\label{The:conv} { Let the time step be chosen so that $\Delta t \le 1/D^{n+1}$. Then the following bound on the error holds under the regularity assumptions \eqref{reg}:}
	\begin{eqnarray}
	\lefteqn{\|{\bf u}(t^{{M}+1})-{\bf u}^{{M}+1}\|^2  +{ \frac{3}{4}(\nu_1+\nu_T)\Delta t\sum_{n=1}^{M}\|\nabla  (u_1(t^{n+1})-u_{h,1}^{n+1})\|^2}}\nonumber\\&&+2\kappa\Delta t \sum_{n=1}^{M}\int_{I}|[{\bf u}^n]|  |{\bf u}(t^{n+1})-{\bf u}^{n+1}|^2ds +{ \frac{3}{4}(\nu_2+\nu_T)\Delta t\sum_{n=1}^{M}\|\nabla  (u_2(t^{n+1})-u_{h,2}^{n+1})\|^2}
	\nonumber\\&\leq& \|{\bf u}(t^{1})-{\bf u}_h^{1}\|^2+\frac{(\nu_1{ +\nu_T})\Delta t}{8}(2\|\nabla({ u_1}(t^{1})-u_{h,1}^{1})\|^2_{\Omega_1}+\|\nabla ({ u_1}(t^{0})-u_{h,1}^{0})\|^2_{\Omega_1})\nonumber\\[3pt]&&+\frac{(\nu_2{ +\nu_T})\Delta t}{8}(2\|\nabla({ u_{2}}(t^{1})-{u_{h,2}^{1}})\|^2_{\Omega_2}+\|\nabla ({ u_2}(t^{0})-u_{h,2}^{0})\|^2_{\Omega_2})\nonumber\\&&+C(\Delta t^2+h^{2k}+H^{2k}),\label{thm}
	\end{eqnarray}
	where $C$ is a generic constant depending only on ${f_i},{\nu_i+\nu_T},\Omega$.
\end{theorem}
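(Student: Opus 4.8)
The plan is to carry out a standard energy-type finite-element error analysis, following the template of \cite{CHL12} but now accounting for the projection-based VMS terms. First I would set $e_i^{n}=u_i(t^n)-u_{h,i}^n$ and split it through an interpolant $\tilde u_i^n\in V_i^h$ as $e_i^n=(u_i(t^n)-\tilde u_i^n)+(\tilde u_i^n-u_{h,i}^n)=:\eta_i^n+\phi_{h,i}^n$, where $\eta_i^n$ is controlled by the approximation assumptions \eqref{inp2}--\eqref{inp} and $\phi_{h,i}^n\in V_i^h$ is the discrete error to be estimated. Choosing $\tilde u_i^n$ in the discretely divergence-free space $V_i^h$ lets the pressure term contribute only as $(p_i-q_{h,i},\nabla\cdot\phi_{h,i})$, bounded via \eqref{inp} by an $h^{2k}$ quantity.

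To obtain the error equation I would evaluate the weak form \eqref{weak1} at $t=t^{n+1}$, replace $\partial_t u_i(t^{n+1})$ by the backward difference quotient plus a Taylor truncation term of size $O(\Delta t)$, and subtract the discrete scheme \eqref{alg1}/\eqref{alg2}. The VMS terms require care: since the continuous solution carries no eddy-viscosity contribution, I would add and subtract $\nu_T(\nabla u_i(t^{n+1}),\nabla v_{h,i})$ and $\nu_T(\mathbb{G}_i^{\mathbb{H},n},\nabla v_{h,i})$ so that the stabilization appears consistently, its consistency defect being governed by the projection bound \eqref{pro2} and producing the $H^{2k}$ contribution. Testing with $v_{h,i}=\phi_{h,i}^{n+1}$ and using the polarization identity \eqref{eq} yields the telescoping $\|\phi_{h,i}^{n+1}\|^2-\|\phi_{h,i}^n\|^2$ together with the coercive terms $\nu_i\|\nabla\phi_{h,i}^{n+1}\|^2$ and the VMS dissipation, recombined exactly as in the energy-conservation lemma.

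The routine part is then to bound each remaining term by $C\,(\Delta t^2+h^{2k}+H^{2k})$ plus a small multiple of $\|\nabla\phi_{h,i}^{n+1}\|^2$ to be absorbed on the left and a multiple of $\|\phi_{h,i}^n\|^2$ left for Gronwall. The trilinear convection differences are split the usual way and estimated through the bound on $c_i$ in Lemma \ref{Standard_ineqs} and Young's inequality. The hard part will be the nonlinear interface terms: I must compare the exact friction term $\kappa\int_I|[\mathbf u]|[\mathbf u]\,\phi$ with its geometrically averaged surrogate, which lives on the three time levels $n-1,n,n+1$ and uses the geometric mean $|[\bu_h^n]|^{1/2}|[\bu_h^{n-1}]|^{1/2}$. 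Here I would insert intermediate quantities to isolate (i) the time-lag error from evaluating $|[\mathbf u]|$ at $t^n$ rather than $t^{n+1}$, (ii) the geometric-mean consistency error between $|[\bu_h^n]|^{1/2}|[\bu_h^{n-1}]|^{1/2}$ and $|[\mathbf u](t^{n+1})|$, and (iii) the interface interpolation error; each piece is then controlled by the trace inequalities of Lemma \ref{Standard_ineqs} and the interface estimates of Lemma \ref{lem:nnl}, which is precisely where the factors $\kappa^6$ and $\nu_i^{-5}$ enter.

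Finally I would sum over $n=1,\dots,M$, convert the $\eta$-contributions into $h^{2k}$ via \eqref{inp2}--\eqref{inp}, and invoke the discrete Gronwall Lemma \ref{gron}. The time-step restriction $\Delta t\le 1/D^{n+1}$ is exactly what guarantees the hypothesis $\alpha_i\Delta t<1$ of Lemma \ref{gron}, since $D^{n+1}$ collects the $\tilde\nu^5$, $\kappa^6E^{n+1}$ and $|||\nabla u|||_{\infty,\Omega}^4$ factors generated by the interface and convection estimates. I expect the geometrically averaged interface terms to be the sole genuine obstacle; the linear, convective and VMS pieces follow the classical pattern.
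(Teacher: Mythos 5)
Your proposal follows essentially the same route as the paper's proof: the $\eta$--$\phi$ splitting through a discretely divergence-free interpolant, the error equation obtained by subtracting the scheme from the true solution's equation with the VMS consistency terms isolated via the $L^2$-projection bound \eqref{pro2} (yielding the $H^{2k}$ term), testing with $\phi_{h,i}^{n+1}$ and polarization, the standard three-term convection splitting, the decomposition of the geometrically averaged interface terms into time-lag, geometric-mean-versus-exact, and interpolation pieces controlled by Lemmas \ref{Standard_ineqs} and \ref{lem:nnl}, and finally summation plus the discrete Gronwall Lemma \ref{gron}, with $\Delta t\le 1/D^{n+1}$ serving precisely to meet the hypothesis $\alpha_i\Delta t<1$. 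This matches the paper's argument step for step, including the correct identification of where the $\kappa^6$ and $\nu_i^{-5}$ factors and the time-step restriction originate.
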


\begin{proof}
The finite element error analysis starts by deriving error equations for GA-VMS finite element method (\ref{alg1})-(\ref{alg2}) by subtracting the scheme from weak formulation of \eqref{eq:atmo}-\eqref{eq:atmoBC} . To do this, first note that the true solution of \eqref{eq:atmo}-\eqref{eq:atmoBC} at time $t^{n+1}$ satisfies
	\begin{eqnarray}
	\lefteqn{(\frac{{u_1(t^{n+1})}-u_{1}(t^{n})}{\Delta t} ,v_{h,1})_{\Omega_1} +(\nu_1+\nu_T)(\nabla  u_{1}(t^{n+1}),\nabla v_{h,1})_{\Omega_1}-(p_1(t^{n+1}),\nabla\cdot v_{h,1})_{\Omega_1}}
	\nonumber\\
	&&+\kappa \int_{I}(u_1(t^{n+1})-u_{2}(t^{n+1}))|[{\bf u}(t^{n+1})]|v_{h,1}ds+ {c_1(u_{1}(t^{n+1});u_{1}(t^{n+1}),v_{h,1})}\nonumber\allowdisplaybreaks\\&=&(\frac{{u_1(t^{n+1})}-u_{1}(t^{n})}{\Delta t}-\partial_{t}u_1(t^{n+1}) ,v_{h,1})_{\Omega_1}+\nu_T(\nabla  u_{1}(t^{n+1}),\nabla v_{h,1})_{\Omega_1}\nonumber\\&&+(\bff_1^{n+1},v_{h,1})_{\Omega_1} \label{err1}
	\end{eqnarray}
	and
	\begin{eqnarray}
	\lefteqn{(\frac{{u_{2}(t^{n+1})}-u_{2}(t^{n})}{\Delta t} ,v_{h,2})_{\Omega_2} +(\nu_2+\nu_T)(\nabla  {u}_{2}(t^{n+1}),\nabla v_{h,2})_{\Omega_2}-(p_2(t^{n+1}),\nabla\cdot v_{h,2})_{\Omega_2}}
	\nonumber\allowdisplaybreaks\\
	&&+\kappa \int_{I}(u_{2}(t^{n+1})-u_1(t^{n+1}))|[{\bf u}(t^{n+1})]|v_{h,2}ds+ {c_2(u_{2}(t^{n+1});u_{2}(t^{n+1}),v_{h,2})}\nonumber\allowdisplaybreaks\\&=&(\frac{{u_{2}(t^{n+1})}-u_{2}(t^{n})}{\Delta t} -\partial_{t}u_2(t^{n+1}),v_{h,2})_{\Omega_2}+\nu_T(\nabla  {u}_{2}(t^{n+1}),\nabla v_{h,2})_{\Omega_2}\nonumber\\&&+(\bff_2^{n+1},v_{h,2})_{\Omega_2}, \label{err2}
	\end{eqnarray}
	for all $(v_{h,1}, v_{h,2}) \in (V_1^h,V_2^h)$. For arbitrary $\tilde{u}_1^{n+1}\in V_1^h$ and $\tilde{u}_2^{n+1}\in V_2^h$, the error is decomposed into
	\begin{eqnarray}
	e_1^{n+1}&=&u_1(t^{n+1})-u_{h,1}^{n+1}=(u_1(t^{n+1})-\tilde{u_1}^{n+1})-(u_{h,1}^{n+1}-\tilde{u_1}^{n+1})=:\eta_1^{n+1}-\phi_{h,1}^{n+1},\nonumber\\
	e_2^{n+1}&=&u_{2}(t^{n+1})-u_{h,2}^{n+1}=(u_{2}(t^{n+1})-\tilde{u_{2}}^{n+1})-(u_{h,2}^{n+1}-\tilde{u_{2}}^{n+1}=:\eta_2^{n+1}-\phi_{h,2}^{n+1}.
	\end{eqnarray}
The interpolation error can be estimated with \eqref{inp2}. Thus, subtracting \eqref{alg1}-\eqref{alg100} from \eqref{err1}-\eqref{err2} gives
	\begin{eqnarray}
	\lefteqn{(\frac{\phi_{h,1}^{n+1}-\phi_{h,1}^{n}}{\Delta t} ,v_{h,1})_{\Omega_1} +(\nu_1+\nu_T)(\nabla  \phi_{h,1}^{n+1},\nabla v_{h,1})_{\Omega_1}+\kappa \int_{I}|[{\bf u}_h^n]|u_{h,1}^{n+1}v_{h,1}ds}
		\nonumber\\
		&&-\kappa \int_{I}|[{\bf u}(t^{n+1})]|u_1(t^{n+1})v_{h,1}ds+\kappa \int_{I}u_{2}(t^{n+1})|[{\bf u}(t^{n+1})]|v_{h,1}ds\nonumber
		\\&&-\kappa \int_{I}u_{h,2}^n|[{\bf u}_h^n]|^{1/2}|[{\bf u}_h^{n-1}]|^{1/2}v_{h,1}ds
	\nonumber\\&=&(\frac{\eta_{1}^{n+1}-\eta_{1}^{n}}{\Delta t} ,v_{h,1})_{\Omega_1}+(\nu_1+\nu_T)(\nabla  \eta_{1}^{n+1},\nabla v_{h,1})_{\Omega_1}-(p_1(t^{n+1})-q_1^{n+1},\nabla\cdot v_{h,1})_{\Omega_1}\nonumber\\&&+(\partial_{t}u_1(t^{n+1})-\frac{{u_1(t^{n+1})}-u_{1}(t^{n})}{\Delta t} ,v_{h,1})_{\Omega_1}+\nu_T(\mathbb{G}_1^{\mathbb{H},n}-\nabla u_1(t^{n+1}),\nabla v_{h,1})\nonumber\\&&+{c_1(u_{1}(t^{n+1});u_{1}(t^{n+1}),v_{h,1})}-{c_1(u_{h,1}^{n+1};u_{h,1}^{n+1},v_{h,1})}, \label{err3}
	\end{eqnarray}
	and
	\begin{eqnarray}
	\lefteqn{(\frac{\phi_{h,2}^{n+1}-\phi_{h,2}^{n}}{\Delta t} ,v_{h,2})_{\Omega_2} +(\nu_2+\nu_T)(\nabla  \phi_{h,2}^{n+1},\nabla v_{h,2})_{\Omega_2}+\kappa \int_{I}|[{\bf u}_h^n]|u_{h,2}^{n+1}v_{h,2}ds}
		\nonumber\\
		&&-\kappa \int_{I}|[{\bf u}(t^{n+1})]|u_{2}(t^{n+1})v_{h,2}ds+\kappa \int_{I}u_1(t^{n+1})|[{\bf u}(t^{n+1})]|v_{h,2}ds\nonumber\\&&-\kappa \int_{I}u_{h,1}^n|[{\bf u}_h^n]|^{1/2}|[{\bf u}_h^{n-1}]|^{1/2}v_{h,2}ds\nonumber
	\nonumber\\&=&(\frac{\eta_{2}^{n+1}-\eta_{2}^{n}}{\Delta t} ,v_{h,2})_{\Omega_2}+(\nu_2+\nu_T)(\nabla  \eta_{2}^{n+1},\nabla v_{h,2})_{\Omega_2}-(p_2(t^{n+1})-q_2^{n+1},\nabla\cdot v_{h,2})_{\Omega_2}\nonumber\\&&+(\partial_{t}u_2(t^{n+1})-\frac{{u_{2}(t^{n+1})}-u_{2}(t^{n})}{\Delta t} ,v_{h,2})_{\Omega_2}+\nu_T(\mathbb{G}_2^{\mathbb{H},n}-\nabla u_{2}(t^{n+1}),\nabla v_{h,2})\nonumber\\&&+{c_2(u_{2}(t^{n+1});u_{2}(t^{n+1}),v_{h,2})}-{c_2(u_{h,2}^{n+1};u_{h,2}^{n+1},v_{h,2})}. \label{err4}
	\end{eqnarray}
	Then choosing $v_{h,1}=\phi_{h,1}^{n+1}$ in \eqref{err3} and using the polarization identity \eqref{eq} provides
	\begin{eqnarray}
	\lefteqn{\frac{1}{2\Delta t}\Big(\|\phi_{h,1}^{n+1}\|^2_{\Omega_1}-\|\phi_{h,1}^{n}\|^2_{\Omega_1}+\|\phi_{h,1}^{n+1}-\phi_{h,1}^{n}\|^2_{\Omega_1}\Big) +(\nu_1+\nu_T)\|\nabla  \phi_{h,1}^{n+1}\|_{\Omega_1}}
		\nonumber\\
		&&+\kappa \int_{I}|[{\bf u}_h^n]|u_{h,1}^{n+1}\phi_{h,1}^{n+1}ds- \kappa\int_{I}|[{\bf u}(t^{n+1})]|u_1(t^{n+1})\phi_{h,1}^{n+1}ds\nonumber\\&&+\kappa  \int_{I}u_{2}(t^{n+1})|[{\bf u}(t^{n+1})]|\phi_{h,1}^{n+1}ds- \kappa\int_{I}u_{h,2}^n|[{\bf u}_h^n]|^{1/2}|[{\bf u}_h^{n-1}]|^{1/2}\phi_{h,1}^{n+1}ds
	\nonumber\\&\leq&\abs{(\frac{\eta_{1}^{n+1}-\eta_{1}^{n}}{\Delta t} ,\phi_{h,1}^{n+1})_{\Omega_1}}+(\nu_1+\nu_T)\abs{(\nabla  \eta_{1}^{n+1},\nabla \phi_{h,1}^{n+1})_{\Omega_1}}+\abs{(p_1(t^{n+1})-q_1^{n+1},\nabla\cdot \phi_{h,1}^{n+1})_{\Omega_1}}\nonumber\\&&+\abs{(\partial_{t}u_1(t^{n+1})-\frac{{u_1(t^{n+1})}-u_{1}(t^{n})}{\Delta t} ,\phi_{h,1}^{n+1})_{\Omega_1}}+\nu_T\abs{(\mathbb{G}_1^{\mathbb{H},n}-\nabla u_1(t^{n+1}),\nabla \phi_{h,1}^{n+1})}\nonumber\\&&+{c_1(u_{1}(t^{n+1});u_{1}(t^{n+1}),\phi_{h,1}^{n+1})}-{c_1(u_{h,1}^{n+1};u_{h,1}^{n+1},\phi_{h,1}^{n+1})}. \label{er3}
	\end{eqnarray}
	Applying Cauchy-Schwarz, Young's, and Poincar\'e inequalities along with Taylor theorem, we get
	\begin{eqnarray}
	\abs{(\frac{\eta_{1}^{n+1}-\eta_{1}^{n}}{\Delta t} ,\phi_{h,1}^{n+1})_{\Omega_1}}&\leq&C{ (\nu_1+\nu_T)}^{-1}{\Delta t}^{-1}\int_{t^n}^{t^{n+1}}\|\partial_t\eta_{1}^{n+1}\|_{\Omega_1}^2\nonumber\\&&+\frac{{ (\nu_1+\nu_T)}}{36}\|\nabla \phi_{h,1}^{n+1}\|_{\Omega_1}^2, \label{err0}\\
	(\nu_1+\nu_T)\abs{(\nabla  \eta_{1}^{n+1},\nabla \phi_{h,1}^{n+1})_{\Omega_1}}&\leq&C{ (\nu_1+\nu_T)}\|\nabla \eta_1^{n+1}\|_{\Omega_1}^2\nonumber\\&&+\frac{{ (\nu_1+\nu_T)}}{36}\|\nabla \phi_{h,1}^{n+1}\|_{\Omega_1}^2,\\
	\abs{(p_1(t^{n+1})-q_1^{n+1},\nabla\cdot \phi_{h,1}^{n+1})_{\Omega_1}}&\leq&C{ (\nu_1+\nu_T)}^{-1}\|p_1(t^{n+1})-q_1^{n+1}\|_{\Omega_1}^2\nonumber\\&&+\frac{{ (\nu_1+\nu_T)}}{36}\|\nabla \phi_{h,1}^{n+1}\|_{\Omega_1}^2,\\
	\abs{(\partial_{t}u_1(t^{n+1})-\frac{{u_1(t^{n+1})}-u_{1}(t^{n})}{\Delta t} ,\phi_{h,1}^{n+1})_{\Omega_1}}&\leq&C{ (\nu_1+\nu_T)}^{-1}\|\partial_t u_1(t^{n+1})-\frac{{u_1(t^{n+1})}-u_{1}(t^{n})}{\Delta t}\|_{\Omega_1}^2\nonumber\\&&+\frac{{ (\nu_1+\nu_T)}}{36}\|\nabla \phi_{h,1}^{n+1}\|_{\Omega_1}^2.
	\label{err5}
	\end{eqnarray}
		The equations (\ref{alg101}) and (\ref{alg100}) state that $\mathbb{G}_i^{\mathbb{H},n}=P^H \nabla u_{h,i}^n$ where $P^H$ is the $L^2(\Omega_i)$-orthogonal projection defined by (\ref{pro}). Hence, utilizing Cauchy–Schwarz and Young's inequality to the fifth term on the right hand side of \eqref{er3} yields 
	\begin{eqnarray}
	\lefteqn{\nu_T\abs{(\mathbb{G}_1^{\mathbb{H},n}-\nabla u_1(t^{n+1}),\nabla \phi_{h,1}^{n+1})_{{\Omega_1}}}}\nonumber\\&\leq&(P^H \nabla (u_{h,1}^n-u_{1}(t^n)),\nabla \phi_{h,1}^{n+1})_{{\Omega_1}}-((I-P^H )\nabla u_{1}(t^n),\nabla \phi_{h,1}^{n+1})_{{\Omega_1}}\nonumber\\&&-(\nabla ( u_1(t^{n+1})-u_{1}(t^n)),\nabla \phi_{h,1}^{n+1})_{{\Omega_1}}\nonumber\\&\leq&C\nu_T^2(\nu_1+\nu_T)^{-1}\Big(\|P^H \nabla\eta_1^n\|_{\Omega_1}^2+\|P^H\nabla\phi_{h,1}^n\|_{\Omega_1}^2\nonumber\\&&+\|(I-P^H )\nabla u_{1}(t^n)\|_{\Omega_1}^2+\|\nabla ( u_1(t^{n+1})-u_{1}(t^n))\|_{\Omega_1}^2\Big)\nonumber\\
	&&+{\frac{\nu_1+\nu_T}{36}\|\nabla \phi_{h,1}^{n+1}\|_{\Omega_1}^2}. \label{er1}
	\end{eqnarray}
Taylor remainder formula is used along with \eqref{pro}, \eqref{pro2} and inverse inequality to get
		\begin{eqnarray}
\lefteqn{	\nu_T\abs{(\mathbb{G}_1^{\mathbb{H},n}-\nabla u_1(t^{n+1}),\nabla \phi_{h,1}^{n+1})_{{\Omega_1}}}}\nonumber\\&\leq&C\nu_T^2(\nu_1+\nu_T)^{-1}\Big(\|\nabla \eta_1^n\|^2+h^{-2}\|\phi_{h,1}^n\|^2+H^{2k}\|u_1(t^n)\|_{k+1}^2\nonumber\\&&+\Delta t^2\|\partial_t u_1\|_{L^{\infty}(t^n,t^{n+1};H^1(\Omega))}^2\Big) +{\frac{\nu_1+\nu_T}{36}\|\nabla \phi_{h,1}^{n+1}\|_{\Omega_1}^2}.\label{er2}
	\end{eqnarray}
	The nonlinear terms can be rearranged by adding and subtracting terms and using \\${c_1(u_{h,1}^{n+1};\phi_{h,1}^{n+1},\phi_{h,1}^{n+1})}=0$ as follows.
	\begin{eqnarray}
	\lefteqn{ {c_1(u_{1}(t^{n+1});u_{1}(t^{n+1}),\phi_{h,1}^{n+1})}-{c_1(u_{h,1}^{n+1};u_{h,1}^{n+1},\phi_{h,1}^{n+1})}}\nonumber\\&=&{c_1(\eta_{1}^{n+1};u_{1}(t^{n+1}),\phi_{h,1}^{n+1})}-{c_1(\phi_{h,1}^{n+1};u_{1}(t^{n+1}),\phi_{h,1}^{n+1})}\nonumber\\&&+{c_1(u_{h,1}^{n+1};\eta_{1}^{n+1},\phi_{h,1}^{n+1})}. \label{nnl}
	\end{eqnarray}
	Bounds for the terms on the right hand side of \eqref{nnl} are given as
	\begin{eqnarray}
		{c_1(\eta_{1}^{n+1};u_{1}(t^{n+1}),\phi_{h,1}^{n+1})}&\leq&C{ (\nu_1+\nu_T)}^{-1}\|\nabla\eta_{1}^{n+1}\|_{\Omega_1}^2\|\nabla u_{1}(t^{n+1})\|_{\Omega_1}^2\nonumber\\&&+\frac{{ (\nu_1+\nu_T)}}{36}\|\nabla \phi_{h,1}^{n+1}\|_{\Omega_1}^2,\nonumber\\[5pt]
	{c_1(\phi_{h,1}^{n+1};u_{1}(t^{n+1}),\phi_{h,1}^{n+1})}&\leq&C\|\phi_{h,1}^{n+1}\|_{\Omega_1}^{1/2}\|\nabla \phi_{h,1}^{n+1}\|_{\Omega_1}^{1/2}\|\nabla u_{1}(t^{n+1})\|_{\Omega_1}\|\nabla\phi_{h,1}^{n+1}\|_{\Omega_1}\nonumber\\
	&\leq&C{ (\nu_1+\nu_T)}^{-3}\|\phi_{h,1}^{n+1}\|_{\Omega_1}^2\|\nabla u_{1}(t^{n+1})\|_{\Omega_1}^4\nonumber\\&&+\frac{{ (\nu_1+\nu_T)}}{36}\|\nabla \phi_{h,1}^{n+1}\|_{\Omega_1}^2,\nonumber\\[5pt]
	{c_1(u_{h,1}^{n+1};\eta_{1}^{n+1},\phi_{h,1}^{n+1})} &\leq& C{ (\nu_1+\nu_T)}^{-1}\|\nabla\eta_{1}^{n+1}\|_{\Omega_1}^2\|\nabla u_{h,1}^{n+1}\|_{\Omega_1}^2\nonumber\\&&+\frac{{ (\nu_1+\nu_T)}}{36}\|\nabla \phi_{h,1}^{n+1}\|_{\Omega_1}^2. \label{err7}
	\end{eqnarray}
	The interface integrals can be expressed as  
	\begin{eqnarray}
	\lefteqn{\kappa \int_{I}|[{\bf u}_h^n]|u_{h,1}^{n+1}\phi_{h,1}^{n+1}ds-\kappa \int_{I}|[{\bf u}(t^{n+1})]|u_1(t^{n+1})\phi_{h,1}^{n+1}ds}\nonumber\\&=&-\kappa \int_{I}|[{\bf u}_h^n]|  |\phi_{h,1}^{n+1}|^2ds+\kappa \int_{I}|[{\bf u}_h^{n}]|\eta_{1}^{n+1}\phi_{h,1}^{n+1}ds\nonumber\\&&+\kappa \int_{I}(|[{\bf u}_h^n]|-|[\tilde{\bf u}^n]|)u_1(t^{n+1})\phi_{h,1}^{n+1}ds\nonumber\\&&
	+\kappa \int_{I}(|[\tilde{\bf u}^n]|-|[{\bf u}(t^n)]|)u_1(t^{n+1})\phi_{h,1}^{n+1}ds\nonumber\\
	&&+\kappa \int_{I}(|[{\bf u}(t^n)]|-|[{\bf u}(t^{n+1})]|)u_1(t^{n+1})\phi_{h,1}^{n+1}ds,\label{err8}
	\end{eqnarray}
	and
	\begin{eqnarray}
	\lefteqn{\kappa \int_{I}u_{2}(t^{n+1})|[{\bf u}(t^{n+1})]|\phi_{h,1}^{n+1}ds-\kappa \int_{I}u_{h,2}^n|[{\bf u}^n]|^{1/2}|[{\bf u}^{n-1}]|^{1/2}\phi_{h,1}^{n+1}ds}\nonumber\\&=&\kappa \int_{I}(u_{2}(t^{n+1})-u_{2}(t^{n}))|[{\bf u}(t^{n+1})]|\phi_{h,1}^{n+1}ds+\kappa \int_{I}(\phi_{h,2}^{n}-\eta_{2}^n)|[{\bf u}(t^{n+1})]|\phi_{h,1}^{n+1}ds\nonumber\\&&+\kappa \int_{I}u_{h,2}^n\Big(|[{\bf u}(t^{n+1})]|-\frac{1}{2}(|[{\bf u}(t^{n})]|+|[{\bf u}(t^{n-1})]|)\Big)\phi_{h,1}^{n+1}ds\nonumber\\
	&&+\kappa \int_{I}u_{h,2}^n\Big(\frac{1}{2}(|[{\bf u}(t^{n})]|+|[{\bf u}(t^{n-1})]|-\frac{1}{2}(|[\tilde{\bf u}(t^{n})]|+|[\tilde{\bf u}(t^{n-1})]|))\Big)\phi_{h,1}^{n+1}ds\nonumber\\
	&&+\kappa \int_{I}u_{h,2}^n\Big(\frac{1}{2}(|[\tilde{\bf u}(t^{n})]|+|[\tilde{\bf u}(t^{n-1})]|)-\frac{1}{2}(|[{\bf u}_h^{n}]|+|[{\bf u}_h^{n-1}]|)\Big)\phi_{h,1}^{n+1}ds\nonumber\\
	&&+\kappa \int_{I}u_{h,2}^n\Big(\frac{1}{2}(|[{\bf u}_h^{n}]|+|[{\bf u}_h^{n-1}]|)-|[{\bf u}_h^n]|^{1/2}|[{\bf u}_h^{n-1}]|^{1/2}\Big)\phi_{h,1}^{n+1}ds. \label{err9}
	\end{eqnarray}
	With the use of Lemma \ref{lem:nnl} and the following inequalities
	\begin{eqnarray}
	\Big| |[{\bf u}(t^n)]|-|[\tilde{\bf u}^n]|\Big|&\leq&|[\boldsymbol{ \eta}^n]|,\nonumber\\
	\Big| |[{\bf u}_h^n]|-|[\tilde{\bf u}^n]|\Big|&\leq&|[\boldsymbol{ \phi}_h^n]|,
	\end{eqnarray}
	we bound the terms on the right hand side of \eqref{err8} as
	\begin{eqnarray}
	\lefteqn{\kappa \int_{I}|\eta_{1}^{n+1}||[{\bf u}_h^{n}]||\phi_{h,1}^{n+1}|ds}\nonumber\\&\leq&\frac{C\kappa^2}{4}\|\eta_{1}^{n+1}\|_I^2||[{\bf u}_h^{n}]||^2_I+C{{ (\nu_1+\nu_T)^{-5}}}\|\phi_{h,1}^{n+1}\|_{\Omega_1}^2\allowdisplaybreaks+\frac{{ (\nu_1+\nu_T)}}{36}\|\nabla \phi_{h,1}^{n+1}\|_{\Omega_1}^2,\label{err11}\\[7pt]
	\lefteqn{\kappa \int_{I}|u_1(t^{n+1})|(|[{\bf u}_h^n]|-|[\tilde{\bf u}^n]|)|\phi_{h,1}^{n+1}|ds}\nonumber \\
	&\leq&C\kappa^{6}\|u_1(t^{n+1})\|^{6}_I\Big({{ (\nu_1+\nu_T)^{-5}}}\|\phi_{h,1}^n\|^2_{\Omega_1}+{ (\nu_2+\nu_T)^{-5}}\|\phi_{h,2}^n\|^2_{\Omega_2}\nonumber\allowdisplaybreaks\\&&+{ (\nu_1+\nu_T)^{-5}}\|\phi_{h,1}^{n+1}\|^2_{\Omega_1}\Big)+\frac{{ (\nu_1+\nu_T)}}{32}\|\nabla \phi_{h,1}^n\|^2_{\Omega_1}\nonumber\\&&+\frac{{ (\nu_2+\nu_T)}}{48}\|\nabla \phi_{h,2}^n\|^2_{\Omega_2}+\frac{{ (\nu_1+\nu_T)}}{36}\|\nabla \phi_{h,1}^{n+1}\|^2_{\Omega_1},\label{err12}\allowdisplaybreaks\\[7pt]
\lefteqn{	\kappa \int_{I}|u_1(t^{n+1})|(|[\tilde{\bf u}^n]|-|[{\bf u}(t^n)]|)|\phi_{h,1}^{n+1}|ds}\nonumber\\&\leq&\frac{C\kappa^2}{4}\|u_1(t^{n+1})\|_I^2||[{\boldsymbol{\eta}}^n]||^2_I+C{{ (\nu_1+\nu_T)^{-5}}}\|\phi_{h,1}^{n+1}\|_{\Omega_1}^2\nonumber\\&&+\frac{{ (\nu_1+\nu_T)}}{36}\|\nabla \phi_{h,1}^{n+1}\|_{\Omega_1}^2,\label{err13}\allowdisplaybreaks\\[7pt]
\lefteqn{	\kappa \int_{I}|u_1(t^{n+1})||[{\bf u}(t^n)-{\bf u}(t^{n+1})]||\phi_{h,1}^{n+1}|ds}\nonumber\\&\leq&\frac{C\kappa^2}{4}\|u_1(t^{n+1})\|_I^2||[{\bf u}(t^n)-{\bf u}(t^{n+1})]||^2_I+C{{ (\nu_1+\nu_T)^{-5}}}\|\phi_{h,1}^{n+1}\|_{\Omega_1}^2\nonumber\\&&+\frac{{ (\nu_1+\nu_T)}}{36}\|\nabla \phi_{h,1}^{n+1}\|_{\Omega_1}^2 \label{err14}.
	\end{eqnarray}
	Similarly, the first six terms on the right hand side of \eqref{err9} become
	\begin{eqnarray}
	\lefteqn{\kappa \int_{I}|u_{2}(t^{n+1})-u_{2}(t^{n})||[{\bf u}(t^{n+1})]||\phi_{h,1}^{n+1}|ds}\nonumber\\&\leq&\frac{C\kappa^2}{4}\|u_{2}(t^{n+1})-u_{2}(t^{n})\|_I^2||[{\bf u}(t^{n+1})]||^2_I+C{{ (\nu_1+\nu_T)^{-5}}}\|\phi_{h,1}^{n+1}\|_{\Omega_1}^2\nonumber\\&&+\frac{{ (\nu_1+\nu_T)}}{36}\|\nabla \phi_{h,1}^{n+1}\|_{\Omega_1}^2\label{err15},\allowdisplaybreaks\\[7pt]
	\lefteqn{\kappa \int_{I}|\phi_{h,2}^{n}||[{\bf u}(t^{n+1})]||\phi_{h,1}^{n+1}|ds}\nonumber\\&\leq& C\kappa^{6}||[{\bf u}(t^{n+1})]||_I^{6}\Big({{ (\nu_2+\nu_T)^{-5}}}\|\phi_{h,2}^{n}\|^2_{\Omega_2}+{{ (\nu_1+\nu_T)^{-5}}}\|\phi_{h,1}^{n+1}\|^2_{\Omega_1}\Big)\nonumber\allowdisplaybreaks\\&&+\frac{{ (\nu_2+\nu_T)}}{48}\|\nabla \phi_{h,2}^{n}\|^2_{\Omega_2}+\frac{{ (\nu_1+\nu_T)}}{36}\|\nabla \phi_{h,1}^{n+1}\|^2_{\Omega_1},
	\label{err16}\\[7pt]
	\lefteqn{\kappa \int_{I}|\eta_{2}^n||[{\bf u}(t^{n+1})]||\phi_{h,1}^{n+1}|ds\leq\frac{C\kappa^2}{4}\|\eta_{2}^n\|_I^2||[{\bf u}(t^{n+1})]||^2_I+C{{ (\nu_1+\nu_T)^{-5}}}\|\phi_{h,1}^{n+1}\|_{\Omega_1}^2}\nonumber\\&&+\frac{{ (\nu_1+\nu_T)}}{36}\|\nabla \phi_{h,1}^{n+1}\|_{\Omega_1}^2,\label{err17}\\[7pt]
	\lefteqn{\kappa \int_{I}|u_{h,2}^n|\Big(|[{\bf u}(t^{n+1})]|-\frac{1}{2}(|[{\bf u}(t^{n})]|+|[{\bf u}(t^{n-1})]|)\Big)|\phi_{h,1}^{n+1}|ds}\nonumber\\
	&\leq&\frac{\kappa}{2}\int_{I}|u_{h,2}^n|\Big(|[{\bf u}(t^{n})-{\bf u}(t^{n+1})]|+|[{\bf u}(t^{n-1})-{\bf u}(t^{n+1})]|\Big)|\phi_{h,1}^{n+1}|ds\nonumber\\&\leq&\frac{C\kappa^2}{8}\|u_{h,2}^n\|_I^2\Big(|[{\bf u}(t^{n})-{\bf u}(t^{n+1})]|_I^2+|[{\bf u}(t^{n-1})-{\bf u}(t^{n+1})]|_I^2\Big)\nonumber\\&&+C{{ (\nu_1+\nu_T)^{-5}}}\|\phi_{h,1}^{n+1}\|_{\Omega_1}^2+\frac{{ (\nu_1+\nu_T)}}{36}\|\nabla \phi_{h,1}^{n+1}\|_{\Omega_1}^2,\label{err18}\allowdisplaybreaks\\[7pt]
	\lefteqn{\dfrac{\kappa}{2} \int_{I}|u_{h,2}^n|\Big(|[{\bf u}(t^{n})]|+|[{\bf u}(t^{n-1})]|-|[\tilde{\bf u}(t^{n})]|-|[\tilde{\bf u}(t^{n-1})]|\Big)|\phi_{h,1}^{n+1}|ds}\nonumber\\&\leq&\frac{C\kappa^2}{4}\|u_{h,2}^n\|_I^2(||[\boldsymbol{ \eta}^n]||^2_I+||[\boldsymbol{\eta}^{n-1}]||^2_I)+C{{ (\nu_1+\nu_T)^{-5}}}\|\phi_{h,1}^{n+1}\|_{\Omega_1}^2\nonumber\\&&+\frac{{ (\nu_1+\nu_T)}}{36}\|\nabla \phi_{h,1}^{n+1}\|_{\Omega_1}^2,\label{err19}\\[7pt]
	\lefteqn{\dfrac{\kappa}{2} \int_{I}|u_{h,2}^n|\Big(|[\tilde{\bf u}(t^{n})]|+|[\tilde{\bf u}(t^{n-1})]|-|[{\bf u}_h^{n}]|-|[{\bf u}_h^{n-1}]|\Big)|\phi_{h,1}^{n+1}|ds}\nonumber\\&\leq&C\kappa^{6}\|u_{h,2}^n\|^{6}_I\Big({{ (\nu_1+\nu_T)^{-5}}}\|{\bf \phi}_{h,1}^n\|^2_{\Omega_1}+{ (\nu_2+\nu_T)^{-5}}\|{\bf \phi}_{h,2}^n\|^2_{\Omega_2}+{ (\nu_1+\nu_T)^{-5}}\|{\bf \phi}_{h,1}^{n-1}\|^2_{\Omega_1}\nonumber\\&&+{ (\nu_2+\nu_T)^{-5}}\|{\bf \phi}_{h,2}^{n-1}\|^2_{\Omega_2}+{ (\nu_1+\nu_T)^{-5}}\|\phi_{h,1}^{n+1}\|^2_{\Omega_1}\Big)+\Big(\frac{{ (\nu_1+\nu_T)}}{32}\|\nabla{\bf \phi}_{h,1}^n\|^2_{\Omega_1}\nonumber\\&&+\frac{{ (\nu_2+\nu_T)}}{48}\|\nabla{\bf \phi}_{h,2}^n\|^2_{\Omega_2}+\frac{{ (\nu_1+\nu_T)}}{16}\|\nabla {\bf \phi}_{h,1}^{n-1}\|^2_{\Omega_1}+\frac{{ (\nu_2+\nu_T)}}{16}\|\nabla {\bf \phi}_{h,2}^{n-1}\|^2_{\Omega_2}\nonumber\\&&+\frac{{ (\nu_1+\nu_T)}}{36}\|\nabla \phi_{h,1}^{n+1}\|^2_{\Omega_1}\Big). \label{err20}
	\end{eqnarray}
	The last term of \eqref{err9} can be written as
	\begin{eqnarray}
	\lefteqn{\kappa \int_{I}|u_{h,2}^n|\Big(\frac{1}{2}(|[{\bf u}_h^{n}]|+|[{\bf u}_h^{n-1}]|)-|[{\bf u}_h^n]|^{1/2}|[{\bf u}_h^{n-1}]|^{1/2}\Big)|\phi_{h,1}^{n+1}|ds}\nonumber\\&\leq&\frac{\kappa}{2} \int_{I}|u_{h,2}^n|\Big(|[\boldsymbol{\phi}_h^n]|+|[\boldsymbol{\phi}_h^{n-1}]|+|[\boldsymbol{\eta}^n]|+|[\boldsymbol{\eta}^{n-1}]|+|[{\bf u}(t^n)-{\bf u}(t^{n-1})]|\Big)|\phi_{h,1}^{n+1}|ds, \label{err21}
	\end{eqnarray}
	which can be bounded in a similar way to \eqref{err19} and \eqref{err20}. Inserting all bounds 	in \eqref{er3} and multiplying by $2\Delta t$ 
gives
	\begin{eqnarray}
	\lefteqn{{\|\phi_{h,1}^{n+1}\|_{\Omega_1}^2-\|\phi_{h,1}^{n}\|_{\Omega_1}^2} +\|\phi_{h,1}^{n+1}-\phi_{h,1}^{n}\|_{\Omega_1}^2 +(\nu_1+\nu_T)\Delta t\|\nabla  \phi_{h,1}^{n+1}\|_{\Omega_1}}
	\nonumber\\&&+2\kappa\Delta t \int_{I}|[{\bf u}_h^n]|  |\phi_{h,1}^{n+1}|^2ds-\frac{{ \nu_1+\nu_T}}{8}\Delta t(\|\nabla{\bf \phi}_{h,1}^n\|^2_{\Omega_1}+\|\nabla {\bf \phi}_{h,1}^{n-1}\|^2_{\Omega_1})\nonumber\allowdisplaybreaks\\&&-\frac{{ \nu_2+\nu_T}}{8}\Delta t(\|\nabla{\bf \phi}_{h,2}^n\|^2_{\Omega_2}+\|\nabla {\bf \phi}_{h,2}^{n-1}\|^2_{\Omega_2})\nonumber\allowdisplaybreaks\\&\leq& C\Delta t\Bigg(\Delta t^{-1}\int_{t^n}^{t^{n+1}}\|\partial_t\eta_{1}^{n+1}\|_{\Omega_1}^2+\Big(1+\|\nabla u_{1}(t^{n+1})\|_{\Omega_1}^2+\|\nabla u_{h,1}^{n+1}\|_{\Omega_1}^2\Big)\|\nabla \eta_1^{n+1}\|^2\nonumber\\&&+\|p_1(t^{n+1})-q_1^{n+1}\|_{\Omega_1}^2+\|\partial_t u_1(t^{n+1})-\frac{{u_1(t^{n+1})}-u_{1}(t^{n})}{\Delta t}\|_{\Omega_1}^2+\|\nabla \eta_1^n\|_{\Omega_1}^2\nonumber\\&&+H^{2k}\|u_1(t^n)\|_{k+1}^2+\Delta t^2\|\partial_t u_1\|_{L^{\infty}(t^n,t^{n+1};H^1(\Omega))}^2+\|\eta_{1}^{n+1}\|_I^2||[{\bf u}_h^{n}]||^2_I+\|u_1(t^{n+1})\|_I^2||[\boldsymbol{\eta}^n]||^2_I\nonumber\\[5pt]&&
	+\|u_1(t^{n+1})\|_I^2||[{\bf u}(t^n)-{\bf u}(t^{n+1})]||^2_I+\|u_{2}(t^{n+1})-u_{2}(t^{n})\|_I^2||[{\bf u}(t^{n+1})]||^2_I\nonumber\\[5pt]
	&&+\|\eta_{2}^n\|_I^2||[{\bf u}(t^{n+1})]||^2_I+\|u_{h,2}^n\|_I^2(||[\boldsymbol{ \eta}^n]||^2_I+||[\boldsymbol{\eta}^{n-1}]||^2_I)\nonumber\\
	&&+\|u_{h,2}^n\|_I^2\Big(|[{\bf u}(t^{n})-{\bf u}(t^{n+1})]|_I^2+|[{\bf u}(t^{n-1})-{\bf u}(t^{n+1})]|_I^2+|[{\bf u}(t^{n})-{\bf u}(t^{n-1})]|_I^2\Big)\nonumber\allowdisplaybreaks\\
	&&+\bigg(\|\nabla u_{1}(t^{n+1})\|_{\Omega_1}^{4}+\kappa^{6}(\|u_1(t^{n+1})\|^{6}_I+||[{\bf u}(t^{n+1})]||^{6}_I+\|u_{h,2}^n\|^{6}_I)\bigg)\|\phi_{h,1}^{n+1}\|_{\Omega_1}^2\nonumber\\&&+\kappa^{6}\Big(\|u_1(t^{n+1})\|^{6}_I+\|u_{h,2}^n\|^{6}_I+h^{-2}\Big)\|\phi_{h,1}^n\|^2_{\Omega_1}+\kappa^{6}\bigg(\|u_1(t^{n+1})\|^{6}_I+\|u_{h,2}^n\|^{6}_I\nonumber\allowdisplaybreaks\\&&+||[{\bf u}(t^{n+1})]||^{6}_I\bigg)\|\phi_{h,2}^n\|^2_{\Omega_2}+\kappa^{6}\|u_{h,2}^n\|^{6}_I\Big(\|{\bf \phi}_{h,1}^{n-1}\|^2_{\Omega_1}+\|{\bf \phi}_{h,2}^{n-1}\|^2_{\Omega_2}\Big)\allowdisplaybreaks
	\Bigg).\label{err10}
	\end{eqnarray}
	Under the interpolation estimates \eqref{inp2} and \eqref{inp}, the terms on the right hand side of \eqref{err10} can be expressed as
	\begin{eqnarray}
	\int_{t^n}^{t^{n+1}}\|\partial_t\eta_{1}^{n+1}\|_{\Omega_1}^2&\leq&h^{2k+2}\|\partial_{t}u_1 \|_{L^2(t^n,t^{n+1};H^{k+1}(\Omega))}^2\label{err22},\\[5pt]\|\nabla \eta_1^{n+1}\|_{\Omega_1}^2&\leq&h^{2k}\|u_1\|_{k+1}^2,\\[5pt]
	\|\eta_{2}^n\|_{\Omega_1}^2&\leq&h^{2k+2}\|u_2\|_{k+1}^2,\\[5pt]
	||[\boldsymbol{\eta}^n]||^2_I&\leq&h^{2k+2}||[{\bf u}]||^2_{k+1},\\[5pt]
	\|p_1(t^{n+1})-q_1^{n+1}\|_{\Omega_1}^2&\leq&h^{2k}\|p_1\|_{k}^2, \label{pq}\\[5pt]
\Delta t\|\partial_{t}u_1-\frac{{u_1(t^{n+1})}-u_{1}(t^{n})}{\Delta t} \|_{\Omega_1}^2&\leq&\Delta t^2\|\partial_{tt}u_1\|_{L^\infty(t^n,t^{n+1};L^2(\Omega))}^2,\label{err23}
	\end{eqnarray}	
	Substituting  \eqref{err22}-\eqref{err23} into \eqref{err10} and summing over the time steps yield
		\begin{eqnarray}
	\lefteqn{{\|\phi_{h,1}^{M+1}\|_{\Omega_1}^2-\|\phi_{h,1}^{1}\|_{\Omega_1}^2} +\sum_{n=1}^{M}\|\phi_{h,1}^{n+1}-\phi_{h,1}^{n}\|_{\Omega_1}^2 +(\nu_1+\nu_T)\Delta t\sum_{n=1}^{M}\|\nabla  \phi_{h,1}^{n+1}\|_{\Omega_1}}
	\nonumber\\&&+2\kappa\Delta t \sum_{n=1}^{M}\int_{I}|[{\bf u}_h^n]|  |\phi_{h,1}^{n+1}|^2ds-\frac{{ \nu_1+\nu_T}}{8}\Delta t\sum_{n=1}^{M}(\|\nabla{\bf \phi}_{h,1}^n\|^2_{\Omega_1}+\|\nabla {\bf \phi}_{h,1}^{n-1}\|^2_{\Omega_1})\nonumber\allowdisplaybreaks\\&&-\frac{{ \nu_2+\nu_T}}{8}\Delta t\sum_{n=1}^{M}(\|\nabla{\bf \phi}_{h,2}^n\|^2_{\Omega_2}+\|\nabla {\bf \phi}_{h,2}^{n-1}\|^2_{\Omega_2})\nonumber\allowdisplaybreaks\\&\leq& C\Bigg(h^{2k+2}\|\partial_{t}u_1 \|_{L^2(0,T;H^{k+1}(\Omega_1))}^2+h^{2k}(1+|||\nabla u_{1}|||_{\infty,\Omega_1}^2+S_M)|||u_1|||_{2,k+1}^2\nonumber\\&&+h^{2k}|||p_1|||_{2,k}^2+\Delta t^2(\|\partial_{tt}u_1\|_{L^\infty(0,T;L^2(\Omega_1))}^2+\|\partial_t u_1\|_{L^{\infty}(0,T;H^1(\Omega))}^2)+H^{2k}|||u_1|||_{2,k+1}^2\nonumber\\[5pt]&&+h^{2k+2}(|||u_1|||_{\infty,I}^2+S_M)|||[\boldsymbol{u}]|||^2_{2,k+1}
	+h^{2k+2}(S_M|||u_{1}|||_{2,k+1}^2+||[{\bf u}]||^2_{\infty,I}|||u_{2}|||_{2,k+1}^2)\nonumber\\[5pt]
	&&+\Big(|||\nabla u_{1}|||_{\infty,\Omega_1}^{4}+\kappa^{6}(|||u_1|||^{6}_{\infty,I}+||[{\bf u}]||^{6}_{\infty,I}+S_M)\Big)\Delta t\sum_{n=1}^{M}\|\phi_{h,1}^{n+1}\|_{\Omega_1}^2\nonumber\\&&+\kappa^{6}\Big(|||u_1|||^{6}_{\infty,I}+S_M+h^{-2}\Big)\Delta t\sum_{n=1}^{M}\|\phi_{h,1}^n\|^2_{\Omega_1}+\kappa^{6}\bigg(|||u_1|||^{6}_{\infty,I}+S_M\nonumber\allowdisplaybreaks\\&&+||[{\bf u}]||^{6}_{\infty,I}\bigg)\Delta t\sum_{n=1}^{M}\|\phi_{h,2}^n\|^2_{\Omega_2}+\kappa^{6}S_M\Delta t\sum_{n=1}^{M}\Big(\|{\bf \phi}_{h,1}^{n-1}\|^2_{\Omega_1}+\|{\bf \phi}_{h,2}^{n-1}\|^2_{\Omega_2}\Big)\allowdisplaybreaks
	\Bigg)\label{err}
	\end{eqnarray}
	where $S_M$ represents to right hand side of \eqref{stb}.	Simplifying \eqref{err}, we have
	\begin{eqnarray}
	\lefteqn{{\|\phi_{h,1}^{M+1}\|_{\Omega_1}^2-\|\phi_{h,1}^{1}\|_{\Omega_1}^2} +\sum_{n=1}^{M}\|\phi_{h,1}^{n+1}-\phi_{h,1}^{n}\|_{\Omega_1}^2 +(\nu_1+\nu_T)\Delta t\sum_{n=1}^{M}\|\nabla  \phi_{h,1}^{n+1}\|_{\Omega_1}}
	\nonumber\\&&+2\kappa\Delta t \sum_{n=1}^{M}\int_{I}|[{\bf u}^n]|  |\phi_{h,1}^{n+1}|^2ds-\frac{{ \nu_1+\nu_T}}{8}\Delta t\sum_{n=1}^{M}(\|\nabla{\bf \phi}_{h,1}^n\|^2_{\Omega_1}+\|\nabla {\bf \phi}_{h,1}^{n-1}\|^2_{\Omega_1})\nonumber\allowdisplaybreaks\\&&-\frac{{ \nu_2+\nu_T}}{8}\Delta t\sum_{n=1}^{M}(\|\nabla{\bf \phi}_{h,2}^n\|^2_{\Omega_2}+\|\nabla {\bf \phi}_{h,2}^{n-1}\|^2_{\Omega_2})\nonumber\allowdisplaybreaks\\&\leq& C(\Delta t^2+h^{2k}+H^{2k})+\Big(|||\nabla u_{1}|||_{\infty,\Omega_1}^{4}+\kappa^{6}(|||u_1|||^{6}_{\infty,I}+||[{\bf u}]||^{6}_{\infty,I}\nonumber\\&&+S_M)\Big)\Delta t\sum_{n=1}^{M}\|\phi_{h,1}^{n+1}\|_{\Omega_1}^2+\kappa^{6}\Big(|||u_1|||^{6}_{\infty,I}+S_M+h^{-2}\Big)\Delta t\sum_{n=1}^{M}\|\phi_{h,1}^n\|^2_{\Omega_1}\nonumber\\&&+\kappa^{6}\Big(|||u_1|||^{6}_{\infty,I}+S_M+||[{\bf u}]||^{6}_{\infty,I}\Big)\Delta t\sum_{n=1}^{M}\|\phi_{h,2}^n\|^2_{\Omega_2}\nonumber\\&&+\kappa^{6}S\Delta t\sum_{n=1}^{M}\Big(\|{\bf \phi}_{h,1}^{n-1}\|^2_{\Omega_1}+\|{\bf \phi}_{h,2}^{n-1}\|^2_{\Omega_2}\Big)\allowdisplaybreaks. \label{err24}
	\end{eqnarray}
	Similar to the derivation of \eqref{err24}, we can bound the right hand side of \eqref{err4}. Combining it with \eqref{err24} and using \eqref{stb} give	\begin{eqnarray}
	\lefteqn{{\|\boldsymbol{\phi}_{h}^{M+1}\|^2-\|\boldsymbol{\phi}_{h}^{1}\|^2} +\sum_{n=1}^{M}\|\boldsymbol{\phi}_{h}^{n+1}-\boldsymbol{\phi}_{h}^{n}\|^2 +2\kappa\Delta t \sum_{n=1}^{M}\int_{I}|[{\bf u}^n]|  |\boldsymbol{\phi}_{h}^{n+1}|^2ds}
	\nonumber\\&&+{ \frac{3}{4}(\nu_1+\nu_T)\Delta t\sum_{n=1}^{M}\|\nabla  \phi_{h_1}^{n+1}\|^2 +\frac{3}{4}(\nu_2+\nu_T)\Delta t\sum_{n=1}^{M}\|\nabla  \phi_{h,2}^{n+1}\|^2}
	\nonumber\\&&+\frac{1}{8}({{ \nu_1+\nu_T}})\Delta t\Big(2\sum_{n=1}^{M}(\|\nabla{\bf \phi}_{h,1}^{n+1}\|^2_{\Omega_1}-\|\nabla{\bf \phi}_{h,1}^n\|^2_{\Omega_1})+\sum_{n=1}^{M}(\|\nabla{\bf \phi}_{h,1}^{n}\|^2_{\Omega_1}-\|\nabla {\bf \phi}_{h,1}^{n-1}\|^2_{\Omega_1})\Big)\nonumber\\&&+\frac{1}{8}({{ \nu_2+\nu_T}})\Delta t\Big(2\sum_{n=1}^{M}(\|\nabla{\bf \phi}_{h,2}^{n+1}\|^2_{\Omega_2}-\|\nabla{\bf \phi}_{h,2}^n\|^2_{\Omega_2})+\sum_{n=1}^{M}(\|\nabla {\bf \phi}_{h,2}^{n}\|^2_{\Omega_2}-\|\nabla {\bf \phi}_{h,2}^{n-1}\|^2_{\Omega_2})\Big)\nonumber\allowdisplaybreaks\\\allowdisplaybreaks&\leq& C(\Delta t^2+h^{2k}+H^{2k})\nonumber\allowdisplaybreaks\\&&+\Big(|||\nabla {\bf u}|||_{\infty,\Omega}^{4}+\kappa^{ 6}(||[{\bf u}]||^{ 6}_{\infty,I}+|||\boldsymbol{u}|||^{ 6}_{\infty,I}+S_M)\Big)\Delta t\sum_{n=1}^{M}\|\boldsymbol{\phi}_{h}^{n+1}\|^2\nonumber\allowdisplaybreaks\\&&+\kappa^{ 6}\Big(|||\boldsymbol{u}|||^{ 6}_{\infty,I}+||[{\bf u}]||_{\infty,I}^{ 6}+S_M+h^{-2}\Big)\Delta t\sum_{n=1}^M\|\boldsymbol{ \phi}_{h}^n\|^2+\kappa^{6}S_M\Delta t\sum_{n=1}^M\|\boldsymbol{ \phi}_{h}^{n-1}\|^2.\label{err25}
	\end{eqnarray}
 Dropping the positive term and using discrete Gronwall Lemma \ref{gron} produce
	\begin{eqnarray}
	\lefteqn{\|\boldsymbol{\phi}_{h}^{M+1}\|^2  +2\kappa\Delta t \sum_{n=1}^{M}\int_{I}|[{\bf u}^n]|  |\boldsymbol{\phi}_{h}^{n+1}|^2ds}
	\nonumber\\&&+{ \frac{3}{4}(\nu_1+\nu_T)\Delta t\sum_{n=1}^{M}\|\nabla  \phi_{h_1}^{n+1}\|^2 +\frac{3}{4}(\nu_2+\nu_T)\Delta t\sum_{n=1}^{M}\|\nabla  \phi_{h,2}^{n+1}\|^2}
	\nonumber\\&&+\frac{1}{8}({{ \nu_1+\nu_T}})\Delta t(2\|\nabla{\bf \phi}_{h,1}^{M+1}\|^2_{\Omega_1}+\|\nabla{\bf \phi}_{h,1}^{M}\|^2_{\Omega_1})\nonumber\\&&+\frac{1}{8}({{ \nu_2+\nu_T}})\Delta t(2\|\nabla{\bf \phi}_{h,2}^{M+1}\|^2_{\Omega_2}+\|\nabla {\bf \phi}_{h,2}^{M}\|^2_{\Omega_2})\nonumber\\&\leq& \|\boldsymbol{\phi}_{h}^{1}\|^2+\frac{1}{8}({{ \nu_1+\nu_T}})\Delta t(2\|\nabla{\bf \phi}_{h,1}^1\|^2_{\Omega_1}+\|\nabla {\bf \phi}_{h,1}^{0}\|^2_{\Omega_1})\nonumber\\&&+\frac{1}{8}({{ \nu_2+\nu_T}})\Delta t(2\|\nabla{\bf \phi}_{h,2}^1\|^2_{\Omega_2}+\|\nabla {\bf \phi}_{h,2}^{0}\|^2_{\Omega_2})+C(\Delta t^2+h^{2k}+H^{2k}).
	\end{eqnarray}
	Applying triangle inequalities yields the stated result of the theorem.
	
\end{proof}

\begin{cor} \label{corr}
Let $(\bu,{\bf p})$be a solution of \eqref{eq:atmo}-\eqref{eq:atmoBC} with regularity assumptions \eqref{r} and suppose that $(X_i^h,Q_i^h)$ for $i=1,2$ is given by $P_2/P_1$ Taylor Hood finite elements and $L_i^H$ is given by $P_1$ polynomials, $\nu_T=h$ and $H=h$. Assume the velocity data
$\bu^0$, $\bu^1$ satisfies $$\|\bu(t^0)-\bu^0\|_X+\|\bu(t^1)-\bu^1\|_X \leq C_1h$$
for a generic constant $C_1$ independent of $\Delta t$ and $h$. Then, the error satisfies,
\begin{gather}
\|{\bf u}(t^{{M}+1})-{\bf u}^{{M}+1}\|^2  +{ \frac{3}{4}(\nu_1+\nu_T)\Delta t\sum_{n=1}^{M}\|\nabla  (u_1(t^{n+1})-u_{h,1}^{n+1})\|^2}\nonumber
\\
+\frac{3}{4}(\nu_2+\nu_T)\Delta t\sum_{n=1}^{M}\|\nabla  (u_2(t^{n+1})-u_{h,2}^{n+1})\|^2
\leq C((\Delta t)^2+h^{2k}).\label{cor}
	\end{gather}
\end{cor}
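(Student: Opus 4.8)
The plan is to derive Corollary \ref{corr} directly from Theorem \ref{The:conv}, inserting the concrete data $k=2$ (so that $(X_i^h,Q_i^h)$ are $P_2/P_1$ Taylor--Hood and $L_i^H$ is $P_1$), $\nu_T=h$ and $H=h$; no fresh energy argument is needed, and the whole task is bookkeeping on the right-hand side of \eqref{thm}. I would first note that the left-hand side of \eqref{cor} is exactly the left-hand side of \eqref{thm} with the single nonnegative interface term $2\kappa\Delta t\sum_{n=1}^{M}\int_I|[{\bf u}^n]|\,|{\bf u}(t^{n+1})-{\bf u}^{n+1}|^2ds$ deleted. Since that term is $\ge 0$, it suffices to bound the right-hand side of \eqref{thm} by $C((\Delta t)^2+h^{2k})$ (the time-step condition $\Delta t\le 1/D^{n+1}$ being inherited from the theorem).

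The parameter $H$ enters the right-hand side of \eqref{thm} only through the generic remainder $C((\Delta t)^2+h^{2k}+H^{2k})$; with $H=h$ the term $H^{2k}$ merges into $h^{2k}$ and this remainder collapses to $C((\Delta t)^2+h^{2k})$. It then remains to absorb the three initial-data contributions
$$\|{\bf u}(t^1)-{\bf u}_h^1\|^2,\qquad\frac{(\nu_i+\nu_T)\Delta t}{8}\Big(2\|\nabla(u_i(t^1)-u_{h,i}^1)\|_{\Omega_i}^2+\|\nabla(u_i(t^0)-u_{h,i}^0)\|_{\Omega_i}^2\Big),\quad i=1,2,$$
using the hypothesis $\|{\bf u}(t^0)-{\bf u}^0\|_X+\|{\bf u}(t^1)-{\bf u}^1\|_X\le C_1h$. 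For the two energy-type terms I would use that $\nu_T=h$ stays bounded, so $\nu_i+\nu_T\le\nu_i+1$, and that the $X$-norm controls each per-subdomain gradient, so that $\|\nabla(u_i(t^j)-u_{h,i}^j)\|_{\Omega_i}\le C_1h$; this leaves a contribution of order $\Delta t\,h^2$, which by Young's inequality satisfies $\Delta t\,h^2\le\frac12((\Delta t)^2+h^4)=\frac12((\Delta t)^2+h^{2k})$ for $k=2$ and is therefore of the claimed order.

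The step I expect to demand the most care is the un-weighted term $\|{\bf u}(t^1)-{\bf u}_h^1\|^2$, which, unlike the gradient terms, carries no compensating factor of $\Delta t$. Here I would pass to $L^2$ via Poincar\'e--Friedrichs, $\|{\bf u}(t^1)-{\bf u}_h^1\|\le C_p\|\nabla({\bf u}(t^1)-{\bf u}^1)\|\le C_pC_1h$, and then check that the resulting contribution is consistent with the target rate $h^{2k}$; this is exactly where the precise strength of the initial-data hypothesis (its $L^2$ versus energy accuracy) must be matched to the desired order, and where one must ensure the chosen starting approximations ${\bf u}^0,{\bf u}^1$ are accurate enough. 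Once this term is accounted for, collecting it with the $\Delta t$-weighted gradient terms and the collapsed remainder gives the bound $C((\Delta t)^2+h^{2k})$ for the right-hand side of \eqref{thm}, and hence for the left-hand side of \eqref{cor}, completing the argument.
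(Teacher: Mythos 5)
Your overall route coincides with the paper's own (implicit) proof: Corollary \ref{corr} is stated in the paper without any proof, and it is meant to follow by specializing Theorem \ref{The:conv} to $k=2$, $H=h$, $\nu_T=h$, dropping the nonnegative interface term, and absorbing the starting errors. Your handling of the $\Delta t$-weighted gradient terms is correct: with $\nu_T=h$ bounded one gets a contribution of order $\Delta t\, h^2$, and Young's inequality gives $\Delta t\, h^2\le\tfrac12\big((\Delta t)^2+h^4\big)$, which is of the claimed order for $k=2$.

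The genuine gap is exactly the step you flagged but did not close: the un-weighted term $\|{\bf u}(t^{1})-{\bf u}_h^{1}\|^2$ on the right-hand side of \eqref{thm}. The hypothesis of the corollary controls only the $X$-norm (an $H^1$-type norm) of the starting errors, so your Poincar\'e--Friedrichs step yields $\|{\bf u}(t^{1})-{\bf u}_h^{1}\|^2\le C_p^2C_1^2h^2$, and $h^2$ is \emph{not} bounded by $C\big((\Delta t)^2+h^{2k}\big)=C\big((\Delta t)^2+h^4\big)$ as $h\to 0$ (take, e.g., $\Delta t\sim h^2$): the term you were hoping to ``account for'' dominates the claimed rate. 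With the stated hypothesis and Theorem \ref{The:conv}, the best conclusion obtainable along your route is $C\big((\Delta t)^2+h^2\big)$, not \eqref{cor}. To recover the claimed rate one needs a stronger assumption on the starting values in the weaker norm, e.g. $\|{\bf u}(t^0)-{\bf u}^0\|+\|{\bf u}(t^1)-{\bf u}^1\|\le C_1h^{2}$ in $L^2$ (which holds in practice when ${\bf u}^0,{\bf u}^1$ are interpolants or $L^2$/Stokes projections of the true solution, whose $L^2$ error is $O(h^{k+1})$), in addition to the $O(h)$ energy-norm bound. This mismatch between hypothesis and conclusion is inherited from the paper itself, which offers no proof of the corollary; but since your argument leans on the Poincar\'e bound, it does not close as written, and you should either strengthen the initial-data hypothesis or weaken the claimed rate, and state explicitly which you are doing.
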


\section{Numerical Studies}

In this section, we present a couple of numerical experiments which illustrate situations in which GA-VMS method discussed in the previous sections is beneficial. Numerical studies of GA-VMS method includes a comparison with different types of finite element discretizations of AO interaction. The first experiment serves as a support for the orders of convergence given by Corollary \ref{corr}. The second experiment is to show the energy balance of the problem. The last includes the flow over a cliff type of problem. The simulations were performed with the Taylor-Hood pair of spaces $(P2/P1)$ for velocity and pressure, and also piecewise linear finite element space $P1$ for the large scale space on the same mesh instead of piecewise quadratic finite element space $P2$ on a different coarse mesh, see \cite{jokaya1}; otherwise requires transfer of solutions from one mesh to the other which adds extra computational complexity.

We first compare our results with GA of \cite{CHL12}. The scheme reads: Find $(u_{h,i}^{n+1},p_{h,i}^{n+1})\in (X_{i}^h,Q_{i}^h)$ satisfying
\begin{eqnarray}
(\frac{{u^{n+1}_{h,i}}-u_{h,i}^{n}}{\Delta t} ,v_{h,i})_{\Omega_i} +\nu_i(\nabla  u_{h,i}^{n+1},\nabla v_{h,i})_{\Omega_i}
+({u}_{h,i}^{n+1}\cdot \nabla {u}_{h,i}^{n+1},v_{h,i})_{\Omega_i} \nonumber
-(p_{h,i}^{n+1},\nabla \cdot v_{h,i}) \nonumber\\
+(\nabla\cdot {u}_{h,i}^{n+1},q_{h,i})_{\Omega_i}
+\kappa \int_{I}|[{\bf u}_h^n]|u_{h,i}^{n+1}v_{h,i}ds
-\kappa \int_{I}u_{h,j}^n|[{\bf u}_h^n]|^{1/2}[{\bf u}_h^{n-1}]^{1/2}v_{h,i}ds,
=(\bff_i^{n+1},v_{h,i})_{\Omega_i}\label{algg3}
\end{eqnarray}
for all $(v_{h,i},q_{h,i}) \in (X_{i}^h,Q_{i}^h)$.

In addition, we also use monolithically coupled algorithms for comparison. This way, the proposed model could be compared against computationally very expensive, yet highly accurate, in terms of interface coupling, solutions. TWM and TWM-VMS refer to solving the system two-way monolithically and two-way monolithically with variational multiscale method, respectively. Galerkin FEM approximation of TWM method reads: Find $(u_{h,i}^{n+1},p_{h,i}^{n+1})\in (X_{i}^h,Q_{i}^h)$ satisfying

\begin{eqnarray}
(\frac{{u^{n+1}_{h,i}}-u_{h,i}^{n}}{\Delta t} ,v_{h,i})_{\Omega_i} +
\nu_i(\nabla  {u}_{h,i}^{n+1},\nabla v_{h,i})_{\Omega_i}
+({u}_{h,i}^{n+1}\cdot \nabla {u}_{h,i}^{n+1},v_{h,i})_{\Omega_i}
-(p_{h,i}^{n+1},\nabla \cdot v_{h,i}) \nonumber
\\
+(\nabla\cdot {u}_{h,i}^{n+1},q_{h,i})_{\Omega_i}
+\kappa \int_{I}|[ {\bf u}_h^n]|[{\bf u}_h^{n+1}]v_{h,i}ds
=(\bff_i^{n+1},v_{h,i})_{\Omega_i},\label{BE2}
\end{eqnarray}
for all $(v_{h,i},q_{h,i}) \in (\bX_{i}^h,Q_{i}^h)$.
Similar to GA-VMS method, TWM-VMS finite element discretization reads: Find $(u_{h,i}^{n+1},p_{h,i}^{n+1},\mathbb{G}_i^{\mathbb{H},{n+1}})\in (X_{i}^h,Q_{i}^h,L_i^H)$ satisfying
\begin{eqnarray}
(\frac{{u^{n+1}_{h,i}}-u_{h,i}^{n}}{\Delta t} ,v_{h,i})_{\Omega_i} +(\nu_i+\nu_T)(\nabla  {u}_{h,i}^{n+1},\nabla v_{h,i})_{\Omega_i} + ({u}_{h,i}^{n+1}\cdot \nabla {u}_{h,i}^{n+1},v_{h,i})_{\Omega_i}-(p_{h,i}^{n+1},\nabla \cdot v_{h,i}) \nonumber
\\
 +(\nabla\cdot {u}_{h,i}^{n+1},q_{h,i})_{\Omega_i}+\kappa \int_{I}|[{\bf u}_h^n]|[{\bf u}_h^{n+1}]v_{h,i}ds
=(\bff_i^{n+1},v_{h,i})_{\Omega_i}+\nu_T(\mathbb{G}_i^{\mathbb{H},n},\nabla v_{h,i}), \label{BE3}\\
(\mathbb{G}_i^{\mathbb{H},n}-\nabla u_{h,i}^n, \mathbb{L}_i^H)_{\Omega_i}=0, \label{algg2}
\end{eqnarray}
for all $(v_{h,i},q_{h,i},\mathbb{L}_i^H) \in (\bX_{i}^h,Q_{i}^h,L_i^H)$.

Simulations were  performed at a problem defined in $\Omega = \Omega_1 \cup \Omega_2$ with $\Omega_1 = [0,1] \times [0,1]$ and $\Omega_2=[0,1] \times [0,-1]$ with prescribed solution
\begin{equation*}
\begin{aligned}
u_{1,1} &= a\nu_1e^{-2bt}x^2(1 - x)^2(1 + y) + ae^{-bt}x(1 - x)\nu_1/\sqrt{\kappa a}\\
u_{1,2} &= a\nu_1e^{-2bt}xy(2 + y)(1 - x)(2x-1) + ae^{-bt}y(2x - 1)\nu_1/\sqrt{\kappa a} \\
u_{2,1} &= a\nu_1 e^{-2bt}x^2(1 - x)^2(1 + \frac{\nu_1}{\nu_2}y) \\
u_{2,2} &= a\nu_1 e^{-2bt}xy(1 - x)(2x - 1)(2 + \frac{\nu_1}{\nu_2}y),
\end{aligned}
\end{equation*}
Herein, for simplicity pressures are set to zero in both domains, and right hand side forcing, boundary and two initial values are computed using the manufactured true solution as is done in \cite{ACEL18}. Problem parameters, $b=1/2$, $\kappa=0.001$ and the final time $T=1$ are fixed while $a$, $\nu_1$ and $\nu_2$ vary from one computation to the other. Numerical experiments are performed on a single mesh, that is $H=h$.  Also discretization parameters, $h$, $\Delta t$ and the eddy viscosity parameter $\nu_T=h$ are refined all together. Therefore, first order accuracy is expected in numerical experiments.

{\it Convergence Rates.} Results with the high-viscosity are presented in Tables \ref{table:GA}-\ref{table:TWM-VMS}. These results agree with the analytical predictions in terms of accuracy. That means, decoupling systems and neglecting unresolved small scales will not impose significantly high error. On contrary, latter might improve accuracy even for high viscosities, see $L^2$-norm-in-space and $L^2$-norm-in-time errors in Table \ref{table:TWM} and Table \ref{table:TWM-VMS}. Note that when it comes to low-viscosity results, GA and TWM both fail to converge since small viscosity causes numerical singularities. On the other hand, equipping GA and TWM with VMS regularizes their systems and produces believable results for higher viscosity, see Table \ref{table:GA-VMSsmall}-\ref{table:TWM-VMSsmall} for the choices $\nu_1 = 0.0005$, $\nu_2 = 0.0001$, $a = 1/\nu_1$. Altogether, the behavior of the discrete solutions observed here is in agreement
with the analytical results: GA-VMS is a first order accuracy model of atmosphere-ocean interaction. It can be also observed that decoupling systems will not introduce too much error as TWM-VMS and GA-VMS both give very similar accuracy results. This might be attributed to the dominating viscosity error (instead of decoupling error).

\begin{table}[!ht]
	\caption{GA for $\nu_1 = 0.5$, $\nu_2 = 0.1$, $a = 1$}
	\begin{center} \label{table:GA}
		\begin{tabular}{|c|c|c|c|c|c|c|c|}
			\hline N  & $||\,u-u^h\,||_{L^2(0,T;L^2(\Omega))}$ & rate & $||\,u-u^h\,||_{L^2(0,T;H^1(\Omega))}$ & rate\\
			\hline 8  & 1.14578e-03 & -    & 1.24305e-02 & -    \\
			\hline 16 & 5.73429e-04 & 1.00 & 4.86981e-03 & 1.35 \\
			\hline 32 & 2.87691e-04 & 1.00 & 2.25678e-03 & 1.11 \\
			\hline 64 & 1.44198e-04 & 1.00 & 1.10762e-03 & 1.03 \\
			\hline
		\end{tabular}
	\end{center}
\end{table}

\begin{table}[!ht]
	\caption{GA-VMS for $\nu_1 = 0.5$, $\nu_2 = 0.1$, $a = 1$}
	\begin{center} \label{table:GA-VMS}
		\begin{tabular}{|c|c|c|c|c|c|c|c|}
			\hline N  & $||\,u-u^h\,||_{L^2(0,T;L^2(\Omega))}$ & rate & $||\,u-u^h\,||_{L^2(0,T;H^1(\Omega))}$ & rate\\
			\hline 8  & 1.76862e-03 & -    & 1.64437e-02 & -    \\
			\hline 16 & 7.39919e-04 & 1.26 & 6.11638e-03 & 1.43 \\
			\hline 32 & 3.29011e-04 & 1.17 & 2.58223e-03 & 1.24 \\
			\hline 64 & 1.54366e-04 & 1.09 & 1.18872e-03 & 1.12 \\
			\hline
		\end{tabular}
	\end{center}
\end{table}
\begin{table}[!ht]
	\caption{TWM for $\nu_1 = 0.5$, $\nu_2 = 0.1$, $a = 1$}
	\begin{center} \label{table:TWM}
		\begin{tabular}{|c|c|c|c|c|c|c|c|}
			\hline N  & $||\,u-u^h\,||_{L^2(0,T;L^2(\Omega))}$ & rate & $||\,u-u^h\,||_{L^2(0,T;H^1(\Omega))}$ & rate\\
			\hline 8  & 1.09092e-03 & -    & 1.19716e-02 & -    \\
			\hline 16 & 5.46568e-04 & 1.00 & 4.58332e-03 & 1.39 \\
			\hline 32 & 2.74340e-04 & 1.00 & 2.10125e-03 & 1.13 \\
			\hline 64 & 1.37532e-04 & 1.00 & 1.02956e-03 & 1.03 \\
			\hline
		\end{tabular}
	\end{center}
\end{table}
\begin{table}[!ht]
	\caption{TWM-VMS for $\nu_1 = 0.5$, $\nu_2 = 0.1$, $a = 1$}
	\begin{center} \label{table:TWM-VMS}
		\begin{tabular}{|c|c|c|c|c|c|c|c|}
			\hline N  & $||\,u-u^h\,||_{L^2(0,T;L^2(\Omega))}$ & rate & $||\,u-u^h\,||_{L^2(0,T;H^1(\Omega))}$ & rate\\
			\hline 8  & 1.56615e-03 & -    & 1.56615e-02 & -    \\
			\hline 16 & 6.96009e-04 & 1.27 & 5.68593e-03 & 1.46 \\
			\hline 32 & 2.38204e-04 & 1.17 & 2.38261e-03 & 1.25 \\
			\hline 64 & 1.09747e-04 & 1.09 & 1.09747e-03 & 1.12 \\
			\hline
		\end{tabular}
	\end{center}
\end{table}

\begin{table}[!ht]
	\caption{GA-VMS for $\nu_1 = 0.0005$, $\nu_2 = 0.0001$, $a = 1/\nu_1$}
	\begin{center} \label{table:GA-VMSsmall}
		\begin{tabular}{|c|c|c|c|c|c|c|c|}
			\hline N  & $||\,u-u^h\,||_{L^2(0,T;L^2(\Omega))}$ & rate & $||\,u-u^h\,||_{L^2(0,T;H^1(\Omega))}$ & rate\\
			\hline 8  & 1.01687e-02 & -    & 8.89222e-02 & -    \\
			\hline 16 & 4.26050e-03 & 1.26 & 4.53765e-02 & 0.97 \\
			\hline 32 & 1.49500e-03 & 1.51 & 2.29722e-02 & 0.98 \\
			\hline 64 & 5.21601e-04 & 1.52 & 1.18369e-02 & 0.96 \\
			\hline 128& 1.98533e-04 & 1.39 & 5.58328e-03 & 1.08 \\
			\hline
		\end{tabular}
	\end{center}
\end{table}

\begin{table}[!ht]
	\caption{TWM-VMS for $\nu_1 = 0.0005$, $\nu_2 = 0.0001$, $a = 1/\nu_1$}
	\begin{center} \label{table:TWM-VMSsmall}
		\begin{tabular}{|c|c|c|c|c|c|c|c|}
			\hline N  & $||\,u-u^h\,||_{L^2(0,T;L^2(\Omega))}$ & rate & $||\,u-u^h\,||_{L^2(0,T;H^1(\Omega))}$ & rate\\
			\hline 8  & 1.01681e-02 & -    & 8.89157e-02 & -    \\
			\hline 16 & 4.25999e-03 & 1.26 & 4.53667e-02 & 0.97 \\
			\hline 32 & 1.49464e-03 & 1.51 & 2.29561e-02 & 0.98 \\
			\hline 64 & 5.21364e-04 & 1.52 & 1.18120e-02 & 0.96 \\
			\hline 128& 1.98402e-04 & 1.39 & 5.55465e-03 & 1.09 \\
			\hline
		\end{tabular}
	\end{center}
\end{table}
It has to be noted that the alternative approach for GA-VMS mentioned on the Remark \ref{remark:alternative_approach} fails to provide good-quality results, see Table \ref{table:GA-VMS_alt_small}.
\begin{table}[!ht]
	\caption{GA-VMS alternative approach for $\nu_1 = 0.0005$, $\nu_2 = 0.0001$, $a = 1/\nu_1$}
	\begin{center} \label{table:GA-VMS_alt_small}
		\begin{tabular}{|c|c|c|c|c|c|c|c|}
			\hline N  & $||\,u-u^h\,||_{L^2(0,T;L^2(\Omega))}$ & rate & $||\,u-u^h\,||_{L^2(0,T;H^1(\Omega))}$ & rate\\
			\hline 8  & 1.53291e-02 & -    & 1.64229e-01 & -    \\
			\hline 16 & 9.03667e-03 & 0.76 & 1.56901e-01 & 0.07 \\
			\hline 32 & 5.55247e-03 & 0.70 & 1.81161e-01 & -0.21 \\
			\hline 64 & 3.65918e-03 & 0.60 & 2.18847e-01 & -0.27 \\
			\hline
		\end{tabular}
	\end{center}
\end{table}

{\it Conservation of Energy.} Computational results related to conservation of global energy is presented next. For simplicity, the problem has been set to keep the same total energy over all the time levels. For this reason, we choose homogeneous Dirichlet boundary conditions everywhere except the interface and zero forcing. Expectations of the energy has to be drawn before giving any results. To that end, weak formulation of the continuous problem shall be considered under homogeneous boundary conditions and zero forcing: Multiplying \eqref{eq:atmo} by $\textbf{u} \in X$, integrating over the whole domain and over $[0,T]$, we get the following energy equality:
\begin{equation}\label{cont_energy}
||\textbf{u}(t)||_{\Omega}^2 + 2\nu\int_{0}^t||\nabla \textbf{u}(s)||_{\Omega}^2ds =  ||\textbf{u}(0)||_{\Omega}^2.
\end{equation}
Herein, define
\begin{equation}\nonumber
\begin{split}
I:=\text{initial kinetic energy } &= ||\textbf{u}(0)||_{\Omega}^2 = ||u_1(0)||_{\Omega_1}^2 + ||u_2(0)||_{\Omega_2}^2,\\
KE:=\text{kinetic energy at t } &= ||\textbf{u}(t)||_{\Omega}^2 = ||u_1(t)||_{\Omega_1}^2 + ||u_2(t)||_{\Omega_2}^2,\\
\mathcal{E}:=\text{energy dissipated by the time t } &= 2\nu\int_{0}^t||\nabla \textbf{u}(s)||_{\Omega}^2ds = 2\nu_1 \int_{0}^t||\nabla u_1(s)||_{\Omega_1}^2ds
\\
&+ 2\nu_2 \int_{0}^t||\nabla u_2(s)||_{\Omega_2}^2ds.
\end{split}
\end{equation}
Energy equality \eqref{cont_energy} means continuous system conserves energy for all time. However, discrete models introduce discretization error such as decoupling errors, consequently, energy is not exactly conserved. The following quantity gives a measurement of how far away energy goes beyond being exact. Considering discrete versions of energies, define
\begin{eqnarray}
AED(t):=\text{absolute energy difference} &=& |I-KE-\mathcal{E}|
\end{eqnarray}
As mentioned above for continuous solution, the problem has been constructed so that it has zero forcing and homogeneous Dirichlet boundary conditions everywhere except the interface, and divergence-free initial values, $u_{i,j}^0$ have been chosen as follows:
\begin{equation}\label{IC}
\begin{split}
u_{i,1} &= \sin(2\pi y)\sin^2(\pi x),\\
u_{i,2} &=-\sin(2\pi x)\sin^2(\pi y), i=1,2.
\end{split}
\end{equation}
Both GA and GA-VMS require two initial values. Therefore, we compute the second initial values with one step of IMEX method proposed in \cite{CHL12} and investigated in \cite{ZHS16}.
\begin{figure}[H]
	\includegraphics[width=0.5\linewidth]{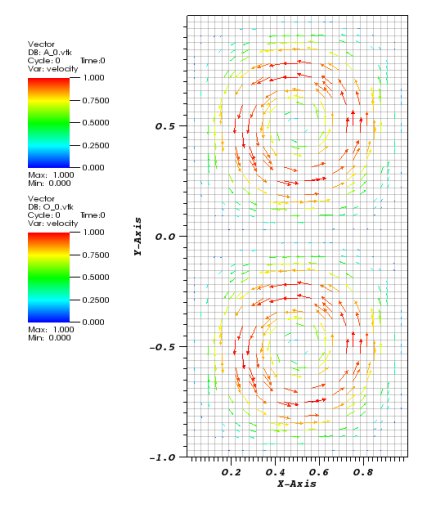}
	\caption{Initial flows}
	\label{fig:IV}
\end{figure}
Discretization parameters are chosen uniform, $h=1/32$, $\Delta t = 0.01$, and computations ended at the final time $T=25$. Problem parameters, $\kappa = 0.001$, $\nu_1 = 1.5e-03$ and $\nu_2 = 1.0e-04$ have been chosen, and computations have been performed on the uniform square mesh shown faded in Figure \ref{fig:IV}. It has to be noted that these choice of parameters is very close to being realistic in terms of drag coefficient $\kappa$ and the ratio of the viscosities. Totally realistic setting with real viscocities causes very prohibitive singularities in GA, at this point, we increase the values of viscosities for reliable GA results. In addition, even under this choices, computations take much longer time for linear systems of GA to converge as seen in the Table \ref{table:times}.
\begin{table}[!ht]
	\caption{Computational times}
	\begin{center} \label{table:times}
		\begin{tabular}{|c|c|c|c|c|c|c|c|}
			\hline GA  &  4h:13m:58s   \\
			\hline GA-VMS & 41m:04s   \\
			\hline
		\end{tabular}
	\end{center}
\end{table}
Noting the fact that global energy is exactly conserved in the true solution of AO interaction, any proposed model shall conserve it as much as possible. Although the mathematical definitions of the energy and energy dissipation rates in GA and GA-VMS are different from continuous formulation, their solutions both physically approach the same quantity, true solution, therefore, any well-constructed comparison should be made with physical meanings of energies given in \eqref{cont_energy}.
\begin{figure}[H]
	\includegraphics[width=0.5\linewidth]{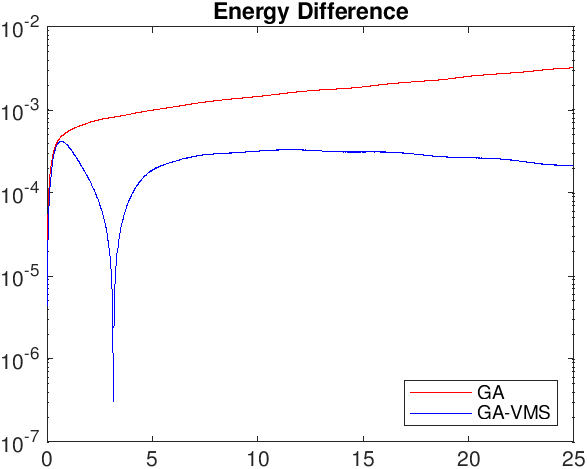}
	\caption{Absolute energy differences between the total energy and the initial energy input, $AED(t)$}
	\label{fig:Energy_difference}
\end{figure}
The absolute differences between the total energy and the initial energy input are computed over all the time levels, and presented in the Figure \ref{fig:Energy_difference}. Clearly, GA-VMS performs better than GA in terms of conservation of the total energy.

In addition, Figure \ref{fig:IV} shows that initial values are inversely rotating flows on both domains and differ only in directions. As a result, only the interaction on the interface determines their expectancy. It can be noted that the flow with higher viscosity will decay faster, due to higher dissipation. Consequently, energy transfer is expected to happen from the domain with the low-viscosity flow to high-viscosity flow, in a long-enough run. Figure \ref{fig:total_energies_separately} illustrates that this expectation has been met by GA-VMS since the total energy in the atmosphere increases beyond the initial energy input while the exact opposite happens in the ocean. On the other hand, the total energy with GA immediately starts dropping in both domains, yet still keeping higher total energy in the atmosphere but less than the initial energy input, which means energy transfer from the ocean to the atmosphere has lost within the numerical error(if ever resolved correctly). This is an obvious achievement for GA-VMS since the goal of such models is to resolve energy transfer reliably.
\begin{figure}[H]
	\includegraphics[width=0.5\linewidth]{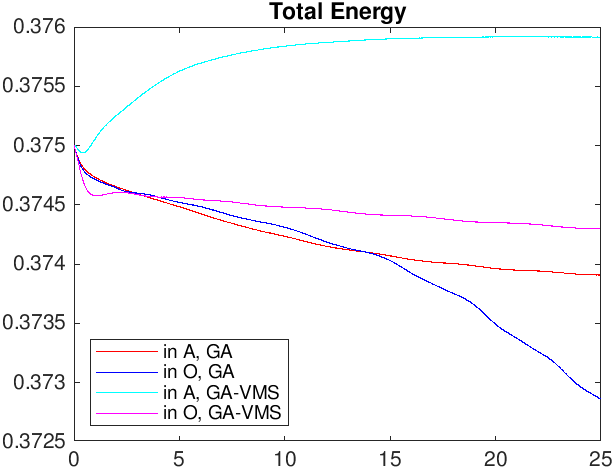}
	\caption{Total energies, ($KE+\mathcal{E}$) of Atmosphere(A) and Ocean(O), separately.}
	\label{fig:total_energies_separately}
\end{figure}

{\it  Long-Time Stability.} We now present computational results for the long-time stability of GA and GA-VMS will be given for a problem, that is constructed so that a parabolic inflow in the atmosphere passes a backward-facing step | a widely used benchmark problem for one-domain fluid-flow | before atmosphere and ocean met, see the domain in the Figure \ref{fig:domain}. Note that this step could be a coast mountain, cliff, etc. in a real life simulation.
\begin{figure}[H]
	\includegraphics[width=0.75\linewidth]{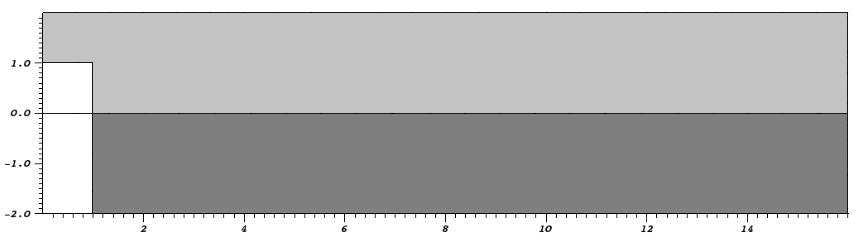}
	\caption{Domain}
	\label{fig:domain}
\end{figure}
Homogeneous Dirichlet boundary conditions have been strongly enforced on the step, on the left wall and the bottom of the ocean. While parabolic inflow profile with maximum inlet 1 drives the flow in the atmosphere,``do nothing'' boundary conditions weakly imposed on the outflow, on the top of atmosphere and the right wall of the ocean. Both fluids are in rest initially, and the second initial values have been computed by one-step of IMEX method as in the previous example, i.e. flows in both domains start with the same initial values. Rest of the parameters have been chosen as in Table \ref{table:tableparameters}.
\begin{table}[!ht]
	\caption{Problem parameters}
	\begin{center} \label{table:tableparameters}
		\begin{tabular}{|c|c|c|c|c|c|c|c|}
			\hline $\nu_1$  &  $\nu_2$  & $\kappa$ & T   & $\Delta t$  & h  & $\nu_T$       \\
			\hline   5e-04  &    5e-03  & 2.45e-03 & 100 &   0.01      & 0.1 - 0.14  & 0.01      \\
			\hline
		\end{tabular}
	\end{center}
\end{table}

Figure \ref{fig:stability} illustrates that the solution with GA starts blowing up around $t=25$ while GA-VMS produces stable results all the way up to final time $T=100$.

\begin{figure}[!ht]
	\includegraphics[width=0.5\linewidth]{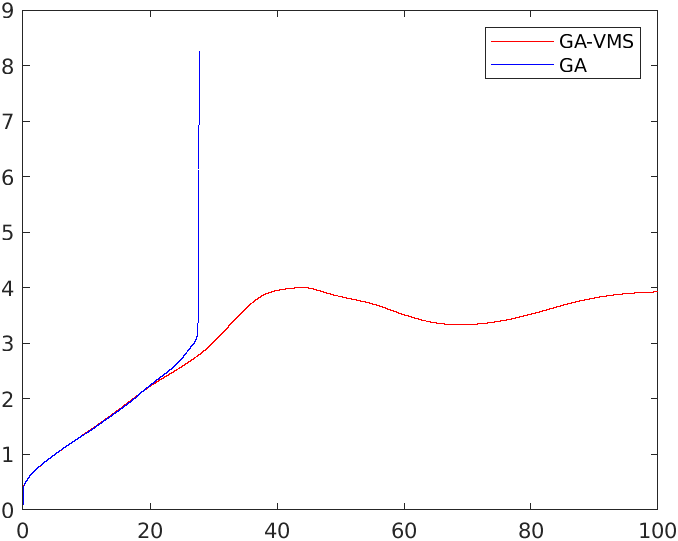}
	\caption{Temporal evolution of $||u_{h}^{n+1}||$ with GA and GA-VMS}
	\label{fig:stability}
\end{figure}

Expected vector fields with GA and GA-VMS (in Figure \ref{fig:step_comparison}) illustrate that both methods produce very similar results as long as they are both stable. However, as seen in the Figure \ref{ga25} and Figure \ref{ga27.75}, solution with GA has already started blowing up around $t=25$.

Figures \ref{fig:step_comparison} and \ref{fig:step_vms} suggest that the interface flow in the ocean tends to follow the direction of the flow just above. For this reason, all consistent direction changes on the interface of the atmosphere results in a separate vortex formation right below. Furthermore, the reattachment point in the atmospheric flow and the separation point of two vertices in the ocean coincide. One can intuitively expect this phenomenon already since, for this setting, the oceanic flow is due to merely its interaction with the atmosphere and possess of very low energy to determine its own persistent direction.

\begin{figure}[htp]
\begin{center}
\subfigure[GA at t=10]{\label{ga10}\includegraphics[width=0.4\textwidth]{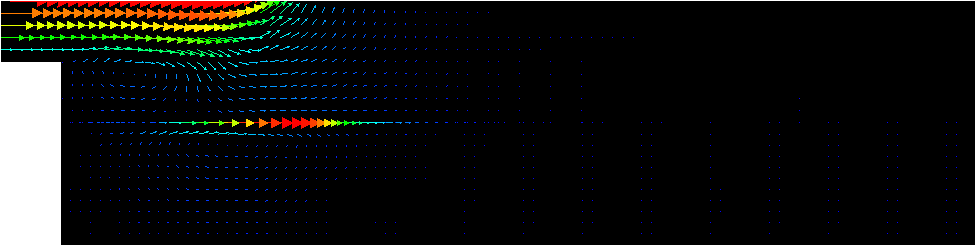}}
\subfigure[GA-VMS at t=10]{\label{vms20}\includegraphics[width=0.4\textwidth]{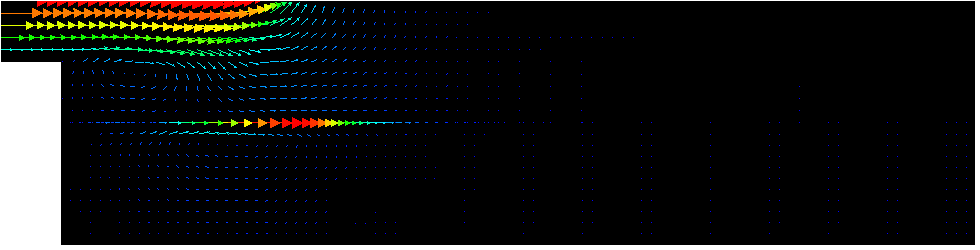}}  \\
\subfigure[GA at t=20]{\label{ga20}\includegraphics[width=0.4\textwidth]{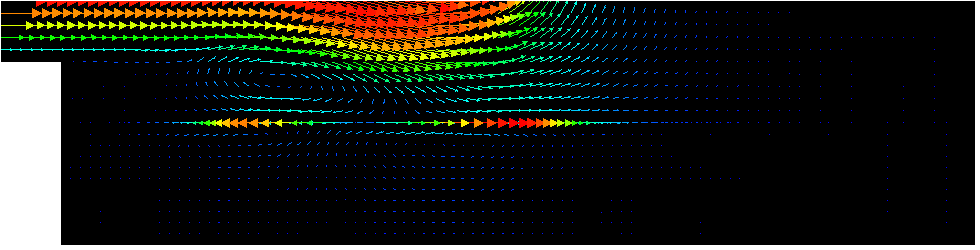}}
\subfigure[GA-VMS at t=20]{\label{vms20}\includegraphics[width=0.4\textwidth]{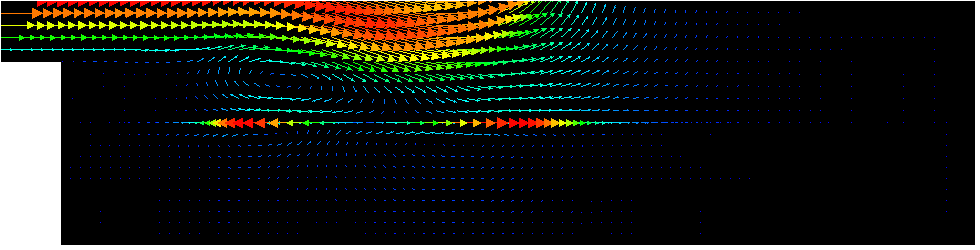}} \\
\subfigure[GA at t=25]{\label{ga25}\includegraphics[width=0.4\textwidth]{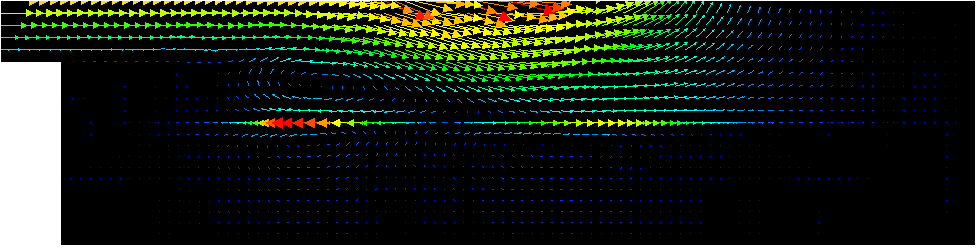}}
\subfigure[GA-VMS at t=25]{\label{vms25}\includegraphics[width=0.4\textwidth]{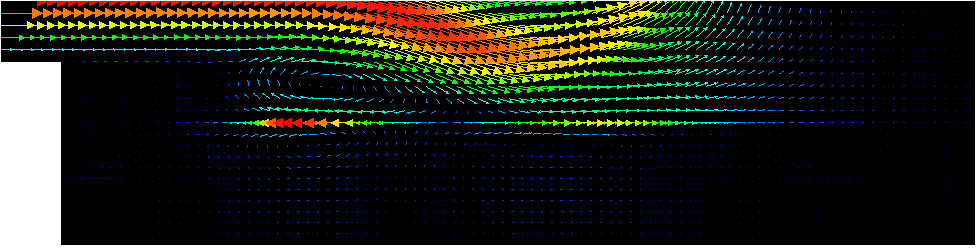}} \\
\subfigure[GA at t=27.75]{\label{ga27.75}\includegraphics[width=0.4\textwidth]{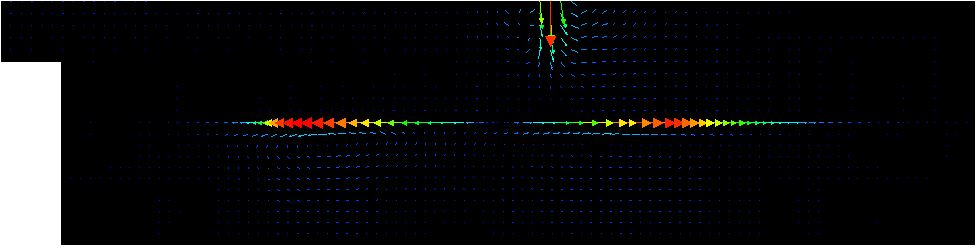}}
\subfigure[GA-VMS at t=27.75]{\label{vms27.75}\includegraphics[width=0.4\textwidth]{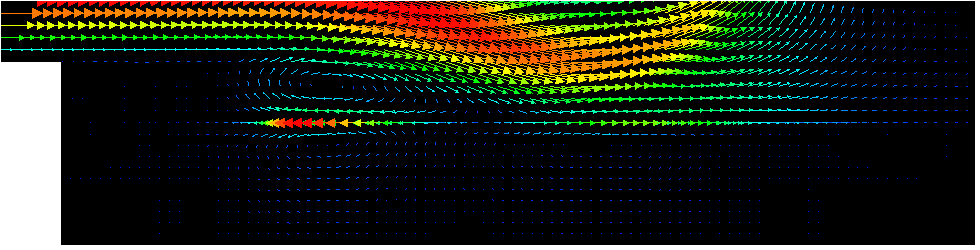}} \\
\end{center}
\caption{Expected vector fields with GA and GA-VMS}
\label{fig:step_comparison}
\vspace*{-0.65cm}
\end{figure}

\begin{figure}[htp]
\begin{center}
\subfigure[GA-VMS at t=30]{\label{vms30}\includegraphics[width=0.4\textwidth]{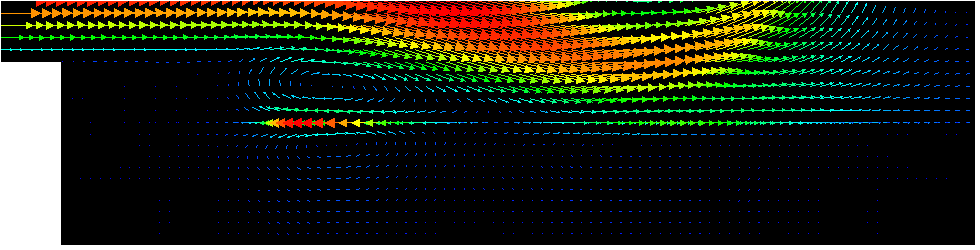}}
\subfigure[GA-VMS at t=40]{\label{vms40}\includegraphics[width=0.4\textwidth]{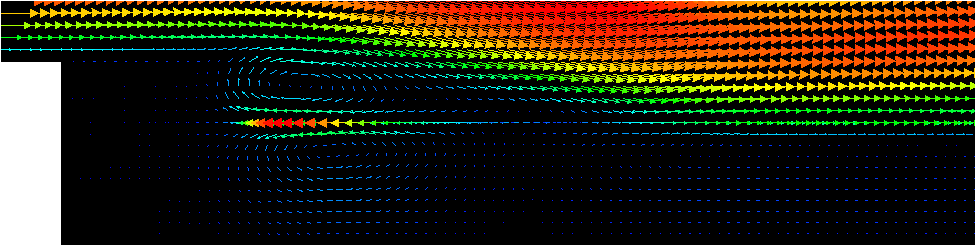}} \\
\subfigure[GA-VMS at t=50]{\label{vms50}\includegraphics[width=0.4\textwidth]{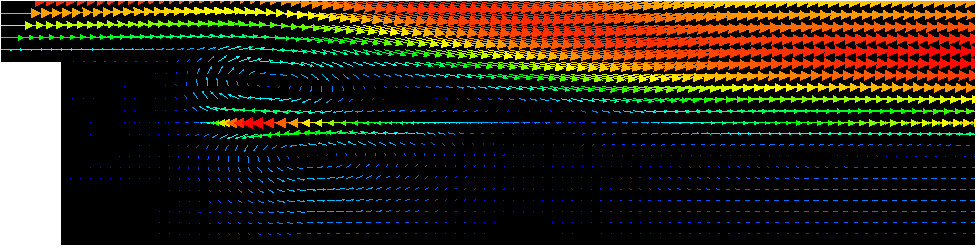}}
\subfigure[GA-VMS at t=60]{\label{vms60}\includegraphics[width=0.4\textwidth]{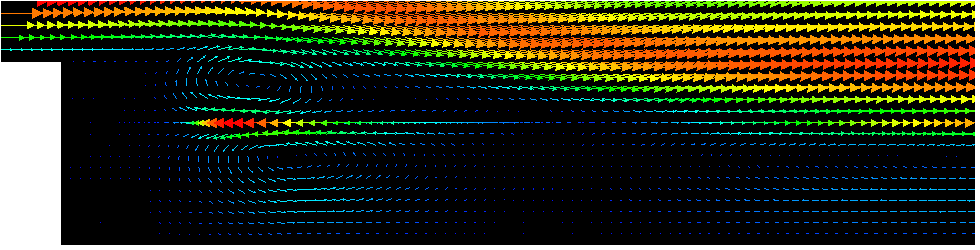}} \\
\subfigure[GA-VMS at t=70]{\label{vms70}\includegraphics[width=0.4\textwidth]{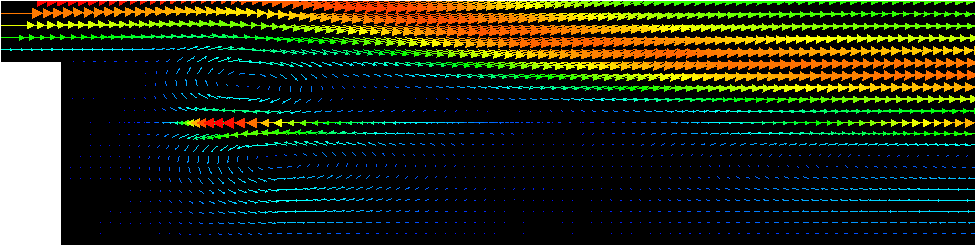}}
\subfigure[GA-VMS at t=80]{\label{vms80}\includegraphics[width=0.4\textwidth]{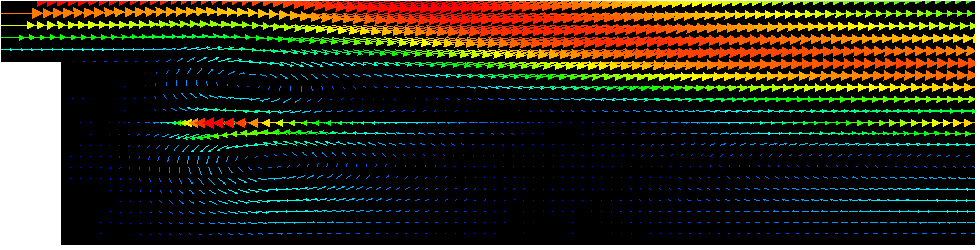}} \\
\subfigure[GA-VMS at t=90]{\label{vms90}\includegraphics[width=0.4\textwidth]{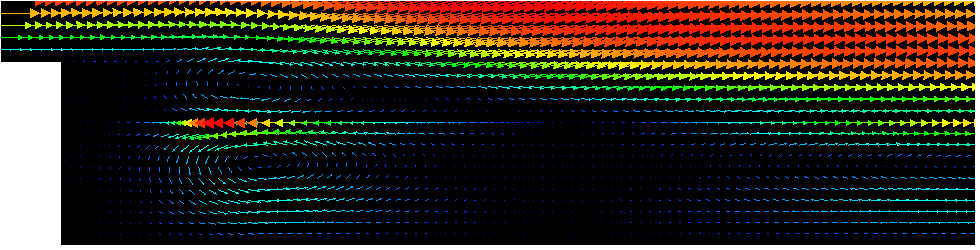}}
\subfigure[GA-VMS at t=100]{\label{vms100}\includegraphics[width=0.4\textwidth]{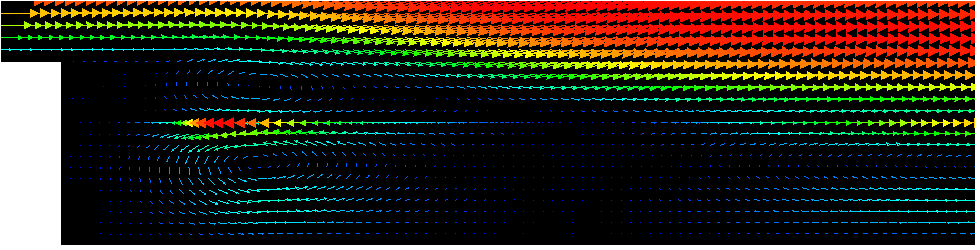}}
\end{center}
\caption{Expected vector fields with GA-VMS}
\label{fig:step_vms}
\vspace*{-0.65cm}
\end{figure}

\section{Conclusions}

In this report we introduced a method for approximating solutions to a turbulent fluid-fluid interaction problem \eqref{eq:atmo}-\eqref{eq:atmoBC}. The method combines the Geometric Averaging method for stable decoupling of the two-domain problem with the Variational Multiscale stabilization technique for high Reynolds number flows. We performed full numerical analysis of the method, proving its stability and accuracy. One of the challenges we had to overcome was the lack of benchmark problems for qualitative testing of our method in the case of low viscosities, $\nu << 1$. In addition to verifying numerically the claimed theoretical accuracy of the method in the case of a known true solution, we also used two other numerical tests to assess the qualitative behavior of the solution. First, we showed that the total global energy of the approximate solution is better conserved with the proposed method - as it should be in the continuous coupled solution. And also, energy transfer from the domain with high energy to the domain with low energy is reliably captured. Secondly, we introduced a ``flow over a cliff'' type of a problem, which could serve as an analogue of flow over a step, in the case of fluid-fluid interaction. The vortices forming and detaching in the air domain were closely matched by the sea regions with increased flow velocity. The GA method (without the VMS component) had failed to work in any of the tests, if the viscosity coefficient was taken to be small enough, while the proposed GA-VMS technique has matched the expectations both quantitatively and qualitatively.

\bibliographystyle{amsplain}
\bibliography{ref}
\end{document}